\definecolor{dgreen}{rgb}{0,0.5,0}
\definecolor{dblue}{rgb}{0,0,0.5}
\definecolor{dred}{rgb}{0.6,0.0,0.1}
\definecolor{dgold}{rgb}{0.5,0.3,0.0}
\definecolor{dvio}{rgb}{0.6,0.3,0.5}
\definecolor{gray}{rgb}{0.5,0.5,0.5}
\definecolor{dbraun}{rgb}{.5,0.2,0}
\newcommand{\dgrau}{\color{gray}}
\newcommand{\colre}{dred}
\newcommand{\colrem}{dgold}
\newcommand{\colil}{dgreen}
\newcommand{\colde}{dblue}
\newcommand{\lp}[1][]{\ell^{#1}}
\newcommand{\Lp}[1][]{\sL^{#1}}
\newcommand{\norm}[1][]{\lVert\cdot\rVert_{#1}}   
\newcommand{\Vnorm}[2][]{\lVert#2\rVert_{#1}}   
\newcommand{\VnormLp}[2][{\Lp[2]}]{\Vnorm[{#1}]{#2}}
\newcommand{\Vskalar}[2][]{\langle #2\rangle_{#1}}   
\newcommand{\VskalarLp}[2][{\Lp[2]}]{\Vskalar[{#1}]{#2}}
\newcommand{\gLp}[1][\ydf]{\Lp[2]_{\mathsmaller{\Rz}}(#1)}
\newcommand{\RLp}{\Lp[2]_{\mathsmaller{\Rz}}}
\def\argmin{\mathop{\mathrm{arg \; min}}\limits}%
\newcommand{\Nsuite}[2][j]{(#2)_{#1 \in\Nz}}
\newcommand{\Zsuite}[2][j]{(#2)_{#1 \in\IZ}}
\newcommand{\set}[1]{{\left\lbrace #1\right\rbrace }}	
\newcommand{\Zset}[2][j\in\Zz]{\set{#2}_{#1}}
\newcommand{\Zsum}[2][j]{\sum_{#1 \in\Zz}#2}
\newcommand{\Nsum}[2][j]{\sum_{#1 \in\Nz}#2}
\newcommand{\nset}[2][]{{\llbracket #1#2\rrbracket }}	
\newcommand{\floor}[1]{\lfloor #1\rfloor}
\newcommand{\lra}{\longrightarrow} 
\newcommand{\eps}{\varepsilon}
\renewcommand{\subset}{\subseteq}
\newcommand{\IN}{\mathbb{N}}
\newcommand{\IZ}{\mathbb{Z}}
\newcommand{\IR}{\mathbb{R}}
\newcommand{\pRz}[1][\Rz]{#1_{+}} 
\newcommand{\IP}{\mathbb{P}}
\newcommand{\IE}{\mathbb{E}}
\newcommand{\iid}{\overset{\text{iid}}{\sim}}
\newcommand{\lcb}{\left\lbrace} 
\newcommand{\rcb}{\right\rbrace} 
\newcommand{\lv}{\left\vert} 
\newcommand{\rv}{\right\vert} 
\newcommand{\lb}{\left(} 
\newcommand{\rb}{\right)} 
\newcommand*{\mc}[1]{\mathcal{#1}}
\newcommand{\dif}{\text{d}}
\newcommand{\Ind}[1]{\mathds{1}\mbox{\scriptsize$#1$}}
\declaretheoremstyle[
    spaceabove=10pt, 
    spacebelow=6pt, 
    headfont=\color{\colre}\normalfont\bfseries,
    notefont=\mdseries\bfseries, 
    notebraces={(}{)}, 
    bodyfont=\normalfont\itshape,
    postheadspace=.3em,
    headpunct={.}]{restyle}
\declaretheoremstyle[
    spaceabove=8pt, 
    spacebelow=8pt, 
    headfont=\color{\colrem}\normalfont\bfseries,
    notefont=\mdseries\bfseries, 
    notebraces={(}{)}, 
    bodyfont=\normalfont\itshape,
    postheadspace=.3em,
    qed=\smaller$\color{\colrem}\square$, 
    headpunct={.}]{remstyle}
\declaretheoremstyle[
    spaceabove=8pt, 
    spacebelow=8pt, 
    headfont=\color{\colde}\normalfont\bfseries,
    notefont=\mdseries\bfseries, 
    notebraces={(}{)}, 
    bodyfont=\normalfont\itshape,
    postheadspace=.3em,
    qed=\smaller$\color{\colde}\square$, 
    headpunct={.}]{destyle}
\declaretheoremstyle[
    spaceabove=8pt, 
    spacebelow=8pt, 
    headfont=\color{\colil}\normalfont\bfseries,
    notefont=\mdseries\bfseries, 
    notebraces={(}{)}, 
    bodyfont=\normalfont\itshape,
    postheadspace=.3em,
    qed=\smaller$\color{\colil}\square$, 
    headpunct={.}]{ilstyle}
\declaretheorem[name=Theorem, style=restyle, numberwithin=section]{theorem}
\declaretheorem[name=Lemma, style=restyle, numberlike=theorem]{lemma}
\declaretheorem[name=Proposition, style=restyle, numberlike=theorem]{proposition}
\declaretheorem[name=Corollary, style=restyle, numberlike=theorem]{corollary}
\declaretheorem[name=Remark, style=remstyle, numberlike=theorem]{remark}
\declaretheorem[name=Illustration, style=ilstyle, numberlike=theorem]{illustration}
\newcommand{\Cz}{{\mathbb C}}
\newcommand{\Nz}{{\mathbb N}}
\newcommand{\Rz}{{\mathbb R}}
\newcommand{\Zz}{{\mathbb Z}}
\newcommand{\sL}{\mathscr{L}}
\newcommand{\cD}{{\mathcal D}}
\newcommand{\cK}{{\mathcal K}}
\newcommand{\oA}{\overline{A}}
\newcommand{\uA}{\underline{A}}
\newcommand{\tS}{\widetilde{S}}
\newcommand{\tT}{\widetilde{T}}
\newcommand{\hS}{\widehat{S}}
\newcommand{\hT}{\widehat{T}}
\newcommand{\xOb}[1][\nb]{X_{#1}}
\newcommand{\yOb}[1][\nb]{Y_{#1}}
\newcommand{\epsOb}[1][\nb]{\varepsilon_{#1}}
\newcommand{\xden}[1][]{f_{#1}}
\newcommand{\xoden}[1][]{f^{\circ}_{{#1}}}
\newcommand{\yoden}[1][]{g^{\circ}_{{#1}}}
\newcommand{\yden}[1][]{g_{#1}}
\newcommand{\epsden}[1][]{\varphi_{#1}}
\newcommand{\ccon}{\text{\textcircled{$\star$}}}
\newcommand{\say}{n}
\newcommand{\qF}{\mathrm{q}^2} 
\newcommand{\hqF}{\hat{\mathrm{q}}^2} 
\newcommand{\qFr}{{\mathrm{q}}} 
\newcommand{\tqF}{\tilde{\mathrm{q}}^2} 
\newcommand{\test}{\Delta}
\newcommand{\sera}{\rho}
\newcommand{\msera}[1][]{\sera_n}
\newcommand{\dimcollect}{\mc K}
\newcommand{\densities}{\mc D}
\newcommand{\expan}{j}
\newcommand{\wdclass}[1][\expan]{a_{#1}}
\newcommand{\rdclass}{\mathrm{R}}
\newcommand{\altcollect}{\mc A}
\newcommand{\adaptivefactor}{\delta_n}
\newcommand{\expb}[1][\expan]{\mathrm{e}_{\mathsmaller{#1}}}
\newcommand{\Pf}[1][f]{\IP_{#1}} 
\newcommand{\trisk}[1][]{\mc R(#1) }
\newcommand{\chisq}{\chi^2}
\newcommand{\signv}[1][]{\tau_{\hspace*{-.2ex}\msml[.7]{#1}}}
\newcommand{\signvb}[1][]{\eta_{\msml[.7]{#1}}}
\newcommand{\sPara}{s}
\newcommand{\pPara}{p}
\newcommand{\indexset}{\mc S}
\newcommand{\indexone}{s}
\newcommand{\indextwo}{t}
\newcommand{\dimindexone}{k^\indexone}
\newcommand{\dimindextwo}{k^\indextwo}
\newcommand{\ssconst}{\mathfrak a} 
\author{{\sc Sandra Schluttenhofer}\;\thanks{Institut f\"ur Angewandte
  Mathematik, M$\Lambda$THEM$\Lambda$TIKON, Im Neuenheimer Feld 205,
  D-69120 Heidelberg, Germany, e-mail:
 \url{{schluttenhofer|johannes}@math.uni-heidelberg.de}} \and {\sc Jan Johannes}$\;^*$}
\date{Ruprecht-Karls-Universität Heidelberg} 
\title{Adaptive minimax testing for circular convolution} 
\begin{document}
\maketitle
\begin{abstract}
Given observations from a circular random variable contaminated by an additive measurement error, we consider the problem of minimax optimal goodness-of-fit testing in a non-asymptotic framework. We propose direct and indirect testing procedures using a projection approach. The structure of the optimal tests depends on regularity and ill-posedness parameters of the model, which are unknown in practice. Therefore, adaptive testing strategies that perform optimally over a wide range of regularity and ill-posedness classes simultaneously are investigated. Considering a multiple testing procedure, we obtain adaptive i.e. assumption-free procedures and analyse their performance. Compared with the non-adaptive tests, their radii of testing face a deterioration by a log-factor. We show that for testing of uniformity this loss is unavoidable by providing a lower bound. The results are illustrated considering Sobolev spaces and ordinary or super smooth error densities. 
\end{abstract}
{\footnotesize
	\begin{tabbing} 
		\noindent \emph{Keywords:} \=nonparametric test theory, nonasymptotic separation radius, minimax
		theory, inverse problem,\\
		\> circular data, adaptive testing, deconvolution, goodness-of-fit, Bonferroni aggregation
	\\[.2ex] 
		\noindent\emph{AMS 2000 subject classifications:} primary 62G10;
	secondary 62C20
\end{tabbing}}%
%

%
%
%
%
\section{Introduction}
\paragraph{The statistical model.}
We consider a circular convolution model where a random variable that takes values on the circle is observed contaminated by an additive error. Identifying the circle with the unit interval $[0,1)$, the observable random variable can be expressed as
\begin{align}
	\label{model}
	\yOb[] = \xOb[] + \epsOb[] - \floor{\xOb[] + \epsOb[]},
\end{align}
where $\xOb[]$ and $\epsOb[]$ are independent random variables supported on the interval $[0,1)$ and $\floor{\cdot}$ denotes the floor-function. The aim of this paper is to investigate adaptive testing procedures for the circular density $\xden$ of $X$ given a sample of independent and identically distributed (iid) copies of $Y$. If $\epsden$ denotes the density of the error $\eps$, then the observable random variable $\yOb[]$ admits a density $\yden = \xden \ccon \epsden$, where $\ccon$ denotes circular convolution defined by
\begin{align*}
	\xden \ccon \epsden (y) = \int_{[0,1)} \xden\lb y - s - \floor{y-s} \rb \epsden(s) \dif s, \qquad y \in [0,1).
\end{align*}
Hence, making inference on $\xden$ based on observations from $\yden$ is a deconvolution problem. Circular, wrapped (around the circumference of the unit circle), spherical or directional data appear in various applications. We briefly mention two popular fields. Circular models are used for data with a temporal or periodic structure, where the circle is identified e.g. with a clock face (cp. \cite{GillHangartner2010}). Moreover, identifying the circle with a compass rose, directional data can also be represented by a circular model.  For many more examples of circular data we refer the reader to \cite{Mardia1972}, \cite{Fisher1995} and \cite{MardiaJupp2009}. \cite{KerkyacharianNgocPicard2011a} and \cite{LacourNgoc2014}, for instance, investigated a circular model with multiplicative error. Nonparametric estimation in the additive error model \eqref{model} has amongst others been considered in \cite{Efromovich1997}, \cite{ComteTaupin2003} and \cite{JohannesSchwarz2013a}.

\paragraph{Testing task.}
 For a prescribed density $\xoden$ we test the null hypothesis $\lcb
 \xden = \xoden \rcb$ against the alternative $\lcb \xden \neq \xoden
 \rcb$ based only on observations of $Y$. We separate the null
 hypothesis and the alternative to make them distinguishable. Consider
 the Hilbert space $\Lp[2] := \Lp[2]([0,1))$ of square-integrable
 complex-valued functions on $[0,1)$ equipped with its usual  norm
 $\norm[{\Lp[2]}]$. We assume throughout this paper that both $\xden$
 and $\epsden$ (and, hence, $\yden$) belong to the subset of real
 probability densities $\densities$ in $\Lp[2]$. Let $\lcb \yOb[j]
 \rcb_{j=1}^n$ be $\say$ independent and identically distributed
 copies of $Y$, i.e. the observations are given by
 \begin{align}
   \label{observations}
   \lcb \yOb[j] \rcb_{j=1}^n \iid \yden = \xden \ccon \epsden. 
 \end{align}
 Denote by $\FuVg{\xdf}$ and $\FuEx{\xdf}$ the probability
 distribution and the expectation associated with the data
 \eqref{observations}, respectively. For a separation radius
 $\sera \in \pRz$, let us define the \textit{energy} set
 $\Lp[2]_{\sera} := \lcb \xden \in \Lp[2] : \VnormLp{{\xden}} \geq
 \sera \rcb$. For a nonparametric class of functions $\mc E$,
 capturing the \textit{regularity} of the alternative, the testing
 task can be written as
 \begin{align}
   \label{testing:problem}
   H_0: \xden = \xoden \qquad \text{against} \qquad H_1^{\sera}:
   \xden - \xoden  \in \Lp[2]_{\sera} \cap \mc E, \xdf\in\cD.
 \end{align}
 In the literature there exist several definitions of rates and radii
 of testing in an asymptotic and nonasymptotic sense. The classical
 definition of an asymptotic rate of testing for nonparametric
 alternatives is essentially introduced in the series of papers
 \cite{Ingster1993}, \cite{Ingster1993a} and \cite{Ingster1993b}. For
 a fixed sample size, two alternative definitions of a nonasymptotic
 radius of testing are typically considered. For prescribed error
 probabilities $\alpha,\beta\in(0,1)$, \cite{Baraud2002},
 \cite{LaurentLoubesMarteau2012} and \cite{MarteauSapatinas2017},
 amongst others, define a nonasymptotic radius of testing as the
 smallest separation radius $\rho$ such that there is an $\alpha$-test
 with maximal type II error probability over the $\rho$-separated
 alternative smaller than $\beta$.  The definition we use in this
 paper -- which is based on the sum of both error probabilities -- is
 adapted e.g. from \cite{CollierCommingesTsybakov2017}.  We measure
 the accuracy of a test $\test$, i.e. a measurable function
 $\test: \IR^n \to \lcb 0,1 \rcb$, by its maximal risk defined as the
 sum of the type I error probability and the maximal type II error
 probability over the $\sera$-separated alternative
 \begin{align*}
   \trisk[\test \mid \mc E, \sera] := \FuVg{\xdfO}(\test = 1) + \sup_{\substack{{\xdf\in\cD\,:\,}\xdf- \xdfO \in \Lp[2]_{\sera} \cap \mc E}} \FuVg{\xdf}(\test = 0). 
 \end{align*}
 We are particularly interested in the smallest possible value of
 $\sera$ for which the null and the $\sera$-separated alternative
 are still distinguishable. A value $\sera^2(\mc E)$ is called an
 upper bound of the \textit{radius of testing} for a family of tests
 $\{\tFi[\alpha], \alpha\in(0,1)\}$ over the alternative $\mc E$, if for all
 $\alpha \in (0,1)$ there exists a constant   
 $\oA_{\alpha} \in \pRz$ such that
 \begin{enumerate}
 \item[] for all $A \geq \oA_{\alpha}$ we have $\trisk[{\tFi[\alpha]} \mid \mc E, A \sera(\mc E)] \leq \alpha$, \hfill (upper bound)
 \end{enumerate}
 The difficulty of the testing problem can be characterised by the
 minimax risk
 \begin{align*}
   \trisk[ \mc E, \sera] := 	\inf_{\test} \trisk[\test \mid \mc E, \sera] 
 \end{align*}
 where the infimum is taken over all possible tests. The value
 {$ \sera^2(\mc E)$} is called \textit{minimax radius of
   testing}, if in
addition for all $\alpha \in (0,1)$ there exists a constant $\uA_{\alpha} \in \pRz$ such that
 \begin{enumerate}
 \item[] for all $A \leq \uA_{\alpha}$ we have $\trisk[ \mc E, A \sera(\mc E) ] \geq 1 - \alpha$ \hfill (lower bound)
 \end{enumerate}
 and the family $\{\tFi[\alpha],\alpha\in(0,1)\}$ is then called minimax optimal.

 \paragraph{Direct and indirect testing procedures.} Considering the
 density $\xoden = \mathds{1}_{[0,1)}$ of a uniform distribution only,
 minimax radii of testing in the circular model \eqref{model} are for
 example derived in \cite{SchluttenhoferJohannes2020a} for
 nonparametric alternatives $\mc E$, covering Sobolev spaces and
 ordinary or super smooth error densities. The authors consider a test
 that is based on a projection estimator of the quantity
 $\Vnorm[{\Lp[2]}]{\xdf -\xdfO}^2$, depending on a dimension
 parameter.  The test, roughly speaking, compares the estimator to a
 multiple of its standard deviation. Choosing the dimension parameter
 optimally, they show that this test is minimax optimal, i.e. it
 achieves the minimax radius of testing given by a typical
 bias-variance trade-off. However, estimating
 $\Vnorm[{\Lp[2]}]{\xdf - \xdfO}^2$ based on observations
 \eqref{observations} in deconvolution models is an inverse problem,
 since it requires an inversion of the convolution
 transformation. This inversion introduces additional instability in
 deconvolution problems, caused by its ill-posedness. To circumvent
 this problem, in an inverse Gaussian sequence space model
 \cite{LaurentLoubesMarteau2011} argue for a \textit{direct} testing
 procedure, which is based on the estimation of the energy in the
 image space of the operator. Let us explain this idea in our
 setting. Instead of the \textit{indirect testing task}
 \eqref{testing:problem} we examine the \textit{direct testing task}
 of the null hypothesis $\lcb \yoden = \yden \rcb$ with
 $\yoden := \xoden \ccon \epsden$ against the alternative
 $\lcb \yoden \neq \yden \rcb$, where we have direct access to
 observations from $\yden$. Using similar arguments as
   \cite{SchluttenhoferJohannes2020a}, it is possible to derive the
   testing radii of a direct test, based on a projection estimator of
   the \textit{direct} quantity $\Vnorm[{\Lp[2]}]{\ydf - \ydfO}^2$.
 In both the direct and indirect testing procedure the radii of
 testing, depending on the dimension parameter, are essentially
 determined  by a bias-variance trade-off. As usual the optimal
 choice of the dimension parameter depends on both the smoothness of
 the alternative and the ill-posedness of the model, which are unknown
 in practice. This motivates the study of adaptive testing procedures,
 which we investigate in this paper.
	
 \paragraph{Adaptive testing.} In the literature adaptive,
 i.e. assumption-free, testing strategies have been studied in both an
 asymptotic and a nonasymptotic framework. In an asymptotic framework,
 e.g. \cite{Spokoiny1996} considers adaptive testing strategies in a
 sequence space model with Besov-type alternatives, showing that
 asymptotic adaptation comes with an unavoidable cost of a
 $\log$-factor. In a nonasymptotic setting,
 \cite{LaurentHuetBaraud2003} consider adaptive testing in a Gaussian
 regression model, \cite{FromontLaurent2006} deal with a density
 model. \cite{Butucea2007} and \cite{ButuceaMatiasPouet2009} determine
 adaptive rates of testing in a convolution model on the real line
 using kernel estimators of the $\Lp[2]$-distance to the null. The
 proposed tests have as a common feature that they are based on
 estimators of the distance to the null, which only depend on the
 (unknown) smoothness through a tuning parameter (e.g. a bandwidth, a
 threshold or a dimension parameter). By aggregating the estimators
 over different tuning parameters into one test statistic - i.e. using
 a multiple testing approach - the authors obtain tests, which perform
 optimally over a wide range of alternatives. Since they no longer
 depend on the unknown regularity of the alternative, they are
 \textit{assumption-free}. To formalise this idea, let us introduce a
 collection $\altcollect$ of regularity parameters that characterise a
 family of alternatives $\{ \wCc,\wC \in \altcollect\}$ with
 corresponding radii
 $\{\tRiN{n} := \sera(\wCc),\wC \in \altcollect\}$, where we now
 explicitly emphasise the dependence on the regularity parameter
 $\wC\in \altcollect$ and the number of observations $n$ in the
 notation. In general, adaptation without a loss is impossible
 (cp. \cite{Spokoiny1996}). To characterise the cost to pay for
 adaptation, we introduce the \textit{effective sample size}
 $\adaptivefactor n$. The factor $\adaptivefactor \in [0,1]$ shrinks
 the sample size $n$ and, hence, evaluating the radius at
 $\adaptivefactor n$ deteriorates the radius of testing.  In fact, the
 value $\adaptivefactor^{-1}$ is called \textit{adaptive factor} for
 the family of tests $\{\tFi[\alpha], \alpha\in(0,1)\}$ over the family of alternatives
 $\{\wCc, \wC \in \altcollect\}$, if for all
 $\alpha \in (0,1)$ there exists a constant $\uA_\alpha$ such that
	\begin{itemize}
		\item[] for all $A \geq \oA_{\alpha}$ we have
                  $\sup_{\wC \in \altcollect} \trisk[{\tFi[\alpha]}
                  \mid \wCc, A \tRiN{\adaptivefactor n}] \leq \alpha$, \hfill 
	\end{itemize}
	where $\tRiN{ n}$ denotes a radius of testing
        of the family $\tFi[\alpha]$, $\alpha\in(0,1)$. We shall emphasise that the testing risk now has to be bounded uniformly for all alternatives $\wCc$. We call $\adaptivefactor^{-1}$ \textit{minimal adaptive factor} if for all $\alpha \in (0,1)$ there exists a constant $\oA_\alpha \in \IR$ such that
	\begin{itemize}
		\item[] for all $A \leq \oA_{\alpha}$ we have $\inf_{\test} \sup_{\wC \in \altcollect} \trisk[\test \mid  \wCc, A \tRiN{\adaptivefactor n}] \geq  1 - \alpha$. \hfill 
	\end{itemize}

\paragraph{Aggregation procedure.} Let us come back to the circular deconvolution problem and the indirect and direct tests discussed above. In this paper we aggregate both testing procedures over a family $\dimcollect \subseteq \IN$ of dimension parameters using a classical Bonferroni method, where for a given level $\alpha \in (0,1)$ each of the tests in the family has level $\frac{\alpha}{\lv \mc K \rv}$. The aggregated testing procedure  rejects the null hypothesis as soon as one test in the collection rejects. It is straight-forward to see that a Bonferroni aggregation of the indirect test proposed in \cite{SchluttenhoferJohannes2020a} leads to an adaptive factor of order $\lv \mc K \rv$. The choice of the family $\mc K$ reflects the collection of alternatives, over which the aggregated test performs optimally. If the alternatives characterise ordinary smoothness of the circular density, the size of $\mc K$ is typically chosen to be of order $\log n$ (c.p. \cite{FromontLaurent2006}, \cite{Spokoiny1996}). Then the aggregated test will feature a deterioration by an adaptive factor of order $\log n$. However, we show in this paper that generally the  minimal adaptive factor is smaller. In order to do so, we first derive sharper bounds for the quantiles of the direct and indirect test statistics using exponential bounds for U-statistics and a Bernstein inequality. This allows to define a new version of an indirect and a direct test, for which we derive radii of testing.  Aggregating these tests via the Bonferroni method, we obtain an adaptive factor for adaptation with respect to smoothness of order $\sqrt{\log \log n}$. Interestingly, for testing for uniformity, i.e. $\xoden = \mathds{1}_{[0,1)}$, the aggregated direct test does not depend on the noise density $\epsden$ and is, thus, also adaptive with respect to the ill-posedness of the model. Moreover, in this situation, we derive a lower bound for the adaptive factor, which provides conditions under which it is minimal.
\paragraph{Outline of the paper.}  
The upper bounds for the radius of testing via an indirect and a
direct testing procedure are derived in \cref{sec:2,sec:4},
respectively. \cref{sec:a:u:b,sec:a:u:b:d} are devoted to adaptive
indirect and direct testing strategies. We provide lower bounds in
\cref{sec:l:b}. Technical derivations are deferred to the
\cref{proofs1}.

\section{Upper bound via an indirect testing procedure}\label{sec:2}
\paragraph{Notation.} The inner product on $\Lp[2]$ that induces the
norm $\norm[{\Lp[2]}]$ is given by
$\Vskalar[{\Lp[2]}]{\HeB,\He}=\int_{[0,1)}\HeB(x)\overline{\He(x)}dx$,
where $\overline{\He(x)}$ denotes the complex conjugate of $\He(x)$.
Consider the family of exponential functions $\Zset{\expb}$ with
$\expb(x) := \exp(-\i 2\pi \expan x)$ for $x \in [0,1)$ and
$\expan\in\Zz$, which is an orthonormal basis of $\Lp[2]$.
Consequently, any $\He\in\Lp[2]$ admits an expansion as a discrete Fourier
series  $\He =
\sum_{\expan \in \IZ} \fHe[j]\expb$ with
$\fHe[j]:=\Vskalar[{\Lp[2]}]{\He,\expb}$, for $j\in\Zz$. By Parseval's
identity  its sequence of Fourier coefficients $\fHe:=\Zsuite{\fHe[j]}$
is square summable. In fact, setting
$\Vnorm[{\lp[p]}]{\fHe}:=(\Zsum{|\fHe[j]|^p})^{1/p}$  for $p\geq1$ we
have $\Vnorm[{\lp[2]}]{\fHe}=\Vnorm[{\Lp[2]}]{\He}$. Moreover, for a density $\ydf$
let us further denote by $\gLp$ the set of
all real-valued (Borel-measurable) functions $h$ satisfying
$\int_{[0,1)}h^2(x)\ydf(x)dx<\infty$, in particular,  $\RLp:=\gLp[\mathds{1}_{[0,1)}]$.
\paragraph{Definition of the test statistic.}
We expand the densities $\xdf,\xdfO\in\densities\subset\Lp[2]$
appearing in the testing task \eqref{testing:problem}
in the
exponential basis. Therefore we have   $ \Vnorm[{\Lp[2]}]{\xdf-\xdfO}^2=
\Nsum[|j|]|\fxdf[j] -
\fxdfO[j]|^2=2\Nsum[j]|\fxdf[j] -
\fxdfO[j]|^2$ due to
Parseval's  identity, since $\fxdf[0]=1=\fxdfO[0]$,
$\fxdf[j]=\ofxdf[-j]$ and  $\fxdfO[j]=\ofxdfO[-j]$ for all $j\in\Zz$.  Additionally, exploiting the circular convolution
theorem, the density $\ydf = \xdf \ccon \edf$ of the observations
\eqref{observations}   admits Fourier coefficients $\fydf[\expan] =
\fxdf[\expan] \cdot \fedf[\expan]$ for all $\expan \in \IZ$.
Keeping $\ydfO = \xdfO \ccon \edf$ and  $Y\sim \ydf=\xdf \ccon \edf$ in mind and  assuming from here on $|\fedf[\expan]|>0$ for all
$\expan\in\Zz$,  we have
\begin{equation*}
	\qF(\xdf-\xdfO): =\Nsum[|j|]|\fxdf[j] -
\fxdfO[j]|^2 = \Nsum[|j|]{\frac{|\fydf[j]- \fydfO[j]|^2
  }{|\fedf[j]|^2}}\quad\text{ with
  }\fydf[j]=\FuEx{\xdf}(\expb(-Y))\text{ for all }j\in\Zz.
\end{equation*}
For $k\in\Nz$ and $\nset{k}:=[1,k]\cap\Nz$  let us define an unbiased estimator $\hqF_k$ of the truncated version
\begin{equation}	\label{qfk}
\qF_k(\xdf-\xdfO):=\sum_{|j|\in\nset{k}} |\fxdf[j]-\fxdfO[j]|^2 =
\sum_{|j|\in\nset{k}}\frac{|\fydf[j]|^2}{|\fedf[j]|^{2}}-2
\sum_{|j|\in\nset{k}}\frac{\fydfO[j]\ofydf[j]}{|\fedf[j]|^{2}}+
  \sum_{|j|\in\nset{k}}|\fxdfO[j]|^2
\end{equation}
using  that
$\sum_{|j|\in\nset{k}}|\fedf[j]|^{-2}\fydfO[j]\ofydf[j]$ is
real-valued.
Replacing  the
unknown Fourier coefficients by empirical counterparts based on observations $\{\yOb[j]\}_{j=1}^n$ we consider the
test statistic
\begin{multline}	\label{hqfk}
\tSi:= \hT_k-2\hS_k + \qF_k(\xdfO)\quad \text{ with }\\
 \hfill
 \hT_k:=\frac{1}{n(n-1)}\sum_{|j|\in\nset{k}}\sum_{l,m\in\nset{n}\atop
 l\ne m} \frac{\expb(-Y_l)\expb(Y_m)}{|\fedf[j]|^2}\quad\text{ and }\quad\hfill\\
  \hS_k:=\frac{1}{n}\sum_{|j|\in\nset{k}} \sum_{l\in\nset{n}}\frac{\fydfO[j]\,\expb(Y_l)}{|\fedf[j]|^{2}}.
\end{multline}
Note that  $\qF_k(\xdfO)$ is known, $\hT_k$ is  a U-statistic
and $\hS_k$ is a linear statistic.
We shall emphasise, if $\{\yOb[j]\}_{j=1}^n \iid \yden $ as in
\eqref{observations} then  $\tSi$ is an unbiased estimator of $\qF_k(\xdf-\xdfO)$ for each
 $k\in\Nz$. Below we construct  a test that, roughly speaking, compares
 the estimator $\tSi$ to a multiple of its standard deviation.
\paragraph{Decomposition of the test statistic.} The key element to
analyse quantiles of the test statistic $\tSi$  in \eqref{hqfk} is
the following decomposition
\begin{multline}\label{decomposition}
	\tSi  = \uSi  + 2\lSi + \qF_k(\xden - \xoden)\quad\text{
          with }\\
\hfill   \uSi := \frac{1}{n(n-1)}
\sum_{|j|\in\nset{k}}\sum_{l,m\in\nset{n}\atop l\ne m} \frac{ (\expb(-Y_l) - \fydf[j])(\expb(Y_m) - \ofydf[j])}{|\fedf[j]|^2}\quad\text{and}\hfill\\
   	 \lSi := \frac{1}{n} \sum_{|j|\in\nset{k}} 
   \sum_{l\in\nset{n}}\frac{(\fydf[j]-\fydfO[j])(\expb(Y_l)-\ofydf[j])}{|\fedf[j]|^2},     
\end{multline}
where $\qF_k(\xdf - \xdfO)$ is a  separation term, $\uSi$ is  a
canonical  U-statistic
and $\lSi$ is a centred linear statistic.
\paragraph{Definition of the threshold.} 
The next proposition provides bounds for the quantiles of the test statistic  $\tSi$. Let $L_x := (1 -
\log x)^{1/2} \in (1,\infty)$ for $x \in (0,1)$.  Define for $k \in \IN$  the quantities%
\begin{equation}\label{nu}
	\nu_k:= \big(\sum_{|j|\in\nset{k}}
  |\fedf[j]|^{-4}\big)^{1/4} \quad\text{and}\quad m_k := \max_{j\in\nset{k}} |\fedf[j]|^{-1}.
\end{equation}
For $\alpha \in (0,1)$ with $c_1:= 799 \Vnorm[{\lp[2]}]{\fydfO}+ 1372,
c_2 := 52  \Vnorm[{\lp[1]}]{\fydfO}$ consider the threshold
\begin{equation}\label{tau}
\tSiC :=  c_1  \lb 1 \vee
  L_\alpha^2\frac{\nu_k}{n^{1/2}} \vee L_\alpha^{3}\frac{\nu_k^2}{n}
  \rb {L_\alpha} \frac{\nu^2_k}{n} + c_2 L^2_\alpha \frac{m^2_k}{n}.
\end{equation}
Note that  $\Vnorm[{\lp[1]}]{\fydfO}\leq
\Vnorm[{\Lp[2]}]{\xdfO}\Vnorm[{\Lp[2]}]{\edf}< \infty$ due to the
Cauchy-Schwarz inequality and Parseval's identity. 
\begin{proposition}\label{quantiles}
For 
  $\xdfO,\xdf,\edf\in\Lp[2]$ and $n\in\Nz$, $n\geq2$, consider $\lcb \yOb[j] \rcb_{j=1}^n \iid \ydf=\xdf\oast\edf$
  with joint distribution $\FuVg{\xdf}$ and
  $\ydfO=\xdfO\oast\edf$. Let $\alpha, \beta \in (0,1)$ and for
  $k\in\Nz$ consider $\tSi$ and $\tSiC$ as in
  \eqref{hqfk} and \eqref{tau}, respectively.
  \begin{resListeN}
  \item\label{alpha:quantile} If   $\gLp[\ydfO]=\RLp$, then 
    $	\FuVg{\xdfO}\lb \tSi \geq \tSiC  \rb \leq \alpha.$
  \item\label{beta:1} If
    $c_3:=8\Vnorm[{\lp[1]}]{\fydf}+826\Vnorm[{\lp[2]}]{\fedf}^2 +1372$ and
    \begin{equation}
      \label{separation}
      \qF_k(\xden - \xoden) \geq  2\big( \tSiC  +
      c_3 L_{\beta/2}^4 \big( 1  \vee
      \frac{\nu_k^2}{n}\big)\frac{\nu_k^2}{n}\big)                    ,
    \end{equation}	
    then	$ \FuVg{\xdf} \lb \tSi < \tSiC \rb \leq \beta.$
  \end{resListeN}
\end{proposition}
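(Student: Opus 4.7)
The proof will rest on the decomposition \eqref{decomposition}, which writes $\tSi$ as the sum of the deterministic separation $\qF_k(\xdf-\xdfO)$, a canonical (Hoeffding-degenerate) U-statistic $\uSi$ of order two, and a centred linear statistic $\lSi$. The tail of $\uSi$ will be controlled by an exponential inequality of Houdr\'e--Reynaud-Bouret (HRB) type for canonical U-statistics, which produces four tail contributions matching exactly the four scales $L_\alpha\nu_k^2/n$, $L_\alpha^3\nu_k^3/n^{3/2}$, $L_\alpha^4\nu_k^4/n^2$ and $L_\alpha^2 m_k^2/n$ that one obtains after expanding the maximum in $\tSiC$. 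The linear statistic $\lSi$, which vanishes identically under the null, will be controlled under the alternative by Bernstein's inequality.

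\textbf{Part (i).} Under the null, $\xdf=\xdfO$ forces both $\qF_k(\xdf-\xdfO)=0$ and $\fydf[j]-\fydfO[j]=0$ for every $j\in\Zz$, so the decomposition collapses to $\tSi=\uSi$ almost surely. The task is therefore to show $\FuVg{\xdfO}(\uSi\geq\tSiC)\leq\alpha$, and I will apply HRB to the kernel $\phi_k(y,y')=\sum_{|j|\in\nset{k}}|\fedf[j]|^{-2}(\expb(-y)-\fydfO[j])(\expb(y')-\overline{\fydfO[j]})$. The four intrinsic norms of $\phi_k$ under $\FuVg{\xdfO}$ can each be controlled by $\nu_k$, $m_k$ and the quantities $\Vnorm[{\lp[2]}]{\fydfO}$, $\Vnorm[{\lp[1]}]{\fydfO}$ through Parseval's identity and the orthogonality of the exponential basis; tracking the universal numerical constants then recovers exactly $c_1$ in front of the three $\nu_k$-scales and $c_2$ in front of $m_k^2/n$.

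\textbf{Part (ii).} For general $\xdf$, rewrite the non-rejection event as $\{\uSi+2\lSi<\tSiC-\qF_k(\xdf-\xdfO)\}$. Hypothesis \eqref{separation} implies $\tSiC-\qF_k(\xdf-\xdfO)/2\leq -\Delta$ with $\Delta:=c_3 L_{\beta/2}^4(1\vee \nu_k^2/n)\nu_k^2/n$, so this event is contained in $\{\uSi\leq -\Delta\}\cup\{2\lSi\leq -\qF_k(\xdf-\xdfO)/2\}$; a union bound reduces the task to controlling each piece by $\beta/2$. The U-statistic piece is handled by HRB under $\FuVg{\xdf}$, where $|\fxdf[j]|\leq 1$ together with $\fydf[j]=\fxdf[j]\fedf[j]$ lets the $\fydfO$-dependent prefactors in $c_1$ be absorbed into the $\Vnorm[{\lp[2]}]{\fedf}^2$-dependent contribution inside $c_3$. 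The linear piece is bounded by Bernstein applied to the iid centred summands of $\lSi$: their $L^\infty$-norm is of order $\Vnorm[{\lp[1]}]{\fydf}\,m_k^2$, which produces the term $8\Vnorm[{\lp[1]}]{\fydf}$ of $c_3$, while their variance is bounded, via Parseval, by $\qF_k(\xdf-\xdfO)\Vnorm[{\lp[2]}]{\fedf}^2$, so the deviation of size $\qF_k/2$ lands in Bernstein's sub-Gaussian regime as soon as \eqref{separation} is in force.

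\textbf{Main obstacle.} The central technical difficulty is the precise identification of the four norms of the U-statistic kernel $\phi_k$ entering HRB: naive bounds scale with $\sum_{|j|\leq k}|\fedf[j]|^{-2}\leq \sqrt{2k}\,\nu_k^2$ rather than the desired $\nu_k^2$, so the orthogonality of the exponential basis must be exploited carefully to replace $\ell^1$-type estimates by $\ell^2$-type ones wherever possible. A related subtlety in (ii) is the split of the gap $\qF_k/2$ between the sub-Gaussian and sub-exponential regimes of Bernstein, which must be arranged so that the single constant $c_3$ absorbs both contributions simultaneously.
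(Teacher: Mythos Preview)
Your approach is essentially the one the paper takes: the same decomposition, the Houdr\'e--Reynaud-Bouret inequality for $\uSi$ with the four kernel norms $A,B,C,D$ computed as in the paper's appendix, and Bernstein for the centred linear part $\lSi$. Two small points deserve adjustment.

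First, in part \ref{beta:1} you have the two Bernstein inputs interchanged. The $L^\infty$ bound on the summands of $2\lSi$ comes from the Cauchy--Schwarz estimate $\sum_{|j|\in\nset{k}}|\fydf[j]-\fydfO[j]|\leq \qFr_k(\xdf-\xdfO)\,\Vnorm[{\lp[2]}]{\fedf}$ and therefore carries $\Vnorm[{\lp[2]}]{\fedf}$, not $\Vnorm[{\lp[1]}]{\fydf}$; conversely the variance bound is $4m_k^2\,\qF_k(\xdf-\xdfO)\,\Vnorm[{\lp[1]}]{\fydf}$ and supplies the $8\Vnorm[{\lp[1]}]{\fydf}$ contribution to $c_3$. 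In particular the variance bound is not free of $m_k^2$.

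Second, and more substantively, your split $\{\uSi\leq -\Delta\}\cup\{2\lSi\leq -\qF_k/2\}$ puts all of the slack $\Delta$ on the U-statistic side. The U-statistic piece is then fine (since $\Delta$ dominates the HRB threshold under $\FuVg{\xdf}$), but the Bernstein piece $\FuVg{\xdf}(2\lSi\leq -\qF_k/2)\leq\beta/2$ does \emph{not} follow from \eqref{separation} with the stated $c_3$: balancing the sub-Gaussian term against $\qF_k/2$ alone costs a factor that the coefficient $8\Vnorm[{\lp[1]}]{\fydf}$ in $c_3$ is too small to absorb. The paper avoids this by splitting the slack as $\tau_1+\tau_2\leq \Delta$, with $\tau_1$ the HRB threshold for $\uSi$ and $\tau_2$ the Bernstein remainder, and then using the event $\{2\lSi\leq -\tau_2-\tfrac12\qF_k\}$ for the linear piece. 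With that split both events have probability at most $\beta/2$ and \eqref{separation} is exactly what is needed to conclude. Your closing remark about ``arranging the split'' already points in this direction; the concrete fix is to leave a $\tau_2$-sized portion of the slack on the Bernstein side rather than on the U-statistic side.
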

\begin{proof}[Proof of \cref{quantiles}]
  Firstly, consider \ref{alpha:quantile}. If $\xdf=\xdfO$ and, hence $\yden = \yoden$, the
  decomposition \eqref{decomposition} simplifies to $\tSi =
  \uSi$, where $\uSi$ is a canonical U-statistic. Applying
  \cref{simplified:con} given in the appendix, a concentration inequality for
  canonical U-statistics of order 2, with $x =
  L_{\alpha}^2\geq1$ and quantities $\uSA$-$\uSD$ satisfying
  \eqref{U:const} we obtain
  \begin{align}
    \label{alpha:1}
   \FuVg{\xdfO} \lb \tSi  \geq  8 \uSC n^{-1} L_\alpha
    + 13 \uSD n^{-1}  L_\alpha^2 + 261 \uSB n^{-3/2} L_\alpha^{3} +
    343 \uSA n^{-2} L_\alpha^4 \rb \leq \alpha. 
  \end{align}
  Consider  the quantities $\uSA$-$\uSC$ in  \eqref{ABCD} and $\uSD$ in
  \eqref{D:null}, which satisfy \eqref{U:const} under the additional condition
  $\gLp[\ydfO]=\RLp$  due
  to \cref{prop:u}.  We have $  8 \uSC n^{-1} L_\alpha
  + 13 \uSD n^{-1}  L_\alpha^2 + 261 \uSB n^{-3/2} L_\alpha^{3}
  + 343 \uSA n^{-2} L_\alpha^4\leq\tSiC $. This  together with \eqref{alpha:1} shows \ref{alpha:quantile}. 
  Secondly, consider \ref{beta:1}. Keeping the decomposition
  \eqref{decomposition} in mind we control the deviations of the
  U-statistic $\uSi$ and the linear statistic $\lSi$ by  applying   \cref{simplified:con}
  and \cref{linear},   respectively. In fact, the
  quantities $\uSA$-$\uSD$ given in   \eqref{ABCD} of \cref{prop:u} fulfil
  (recall $L_{\beta/2}\geq1$) 
  \begin{multline*} 8
    \uSC n^{-1}L_{\beta/2} + 13 \uSD n^{-1} L_{\beta/2}^2 + 261 \uSB n^{-3/2}
    L_{\beta/2}^3 + 343 \uSA
    n^{-2} L_{\beta/2}^4\\
    \leq L_{\beta/2}^4 (825 \Vnorm[{\lp[2]}]{\fydf} +1372)(1\vee
    (\nu_k^2n^{-1}) ) \nu_k^2n^{-1}=:\tau_1.
  \end{multline*}
  Consequently, the event $\Omega_1: = \{\uSi \leq -\tau_1\}$ satisfies
  $\FuVg{\xdf}(\Omega_1)\leq\beta/2$ due to \cref{simplified:con}
  (with the usual symmetry argument). Define further the event
  $\Omega_2 := \lcb 2\lSi \leq - \tau_2 - \tfrac{1}{2} \qF_k(\xden - \xoden )
  \rcb$ with
  $\tau_2:=L_{\beta/2}^2 (8\Vnorm[{\lp[1]}]{\fydf}+\Vnorm[{\lp[2]}]{\fedf}^2) (1\vee
  (m_k^2n^{-1}) ) m_k^2n^{-1}$,
  then
  $\Pf[\xden](\Omega_2) \leq \frac{\beta}{2e} \leq \frac{\beta}{2}$
  due to \cref{linear} with $x = L_{\beta/2}\geq1$, which
  is an application of a Bernstein inequality.
  Since $ \tau_1 + \tau_2\leq L_{\beta/2}^{4} c_3 (1\vee
  (\nu_k^2n^{-1}) ) \nu_k^2n^{-1}$ with
  $c_3=8\Vnorm[{\lp[1]}]{\fydf}+826\Vnorm[{\lp[2]}]{\fedf}^2 +1372$  due to $m_k^2 \leq \nu_k^2$,
  $1\leq L_{\beta/2}$ and
  $\Vnorm[{\lp[2]}]{\fydf}\leq\Vnorm[{\lp[2]}]{\fedf}^2$  
  the assumption \eqref{separation} yields
  $\tfrac{1}{2} \qF_k(\xden - \xoden) \geq \tSiC + \tau_1 +
  \tau_2$. Thus, the decomposition \eqref{decomposition} implies
  \begin{multline*}
    \FuVg{\xdf}\lb \tSi < \tSiC \rb  = \FuVg{\xdf} \lb \lcb  \tSi < \tSiC  \rcb \cap \Omega_1 \rb + \FuVg{\xdf} \lb \lcb  \tSi < \tSiC  \rcb \cap \Omega_1^c \rb  \\
    \leq \FuVg{\xdf}\lb \Omega_1 \rb + \Pf[\xden]\lb 2\lSi+ \qF_k(\xden - \xoden) < \tSiC + \tau_1\rb  \leq \beta/2 + \FuVg{\xdf}(\Omega_2)\leq\beta, 
  \end{multline*}
  which shows \ref{beta:1} and completes the proof.
\end{proof}
\begin{remark}
  The technical assumption $\gLp[\ydfO]=\RLp$ in
  \cref{quantiles} allows us to express elements of $\gLp[\ydfO]$
  in their Fourier expansion. It is immediately satisfied for
  $\xoden =\expb[0]= \mathds{1}_{[0,1)}$ and if $\yoden$ is bounded away from
  $0$ and infinity.
\end{remark}
\paragraph{Definition of the test.} For $k\in\Nz$ and $\alpha\in(0,1)$
using the test statistic
$\hqF_k$  and the threshold $\tSiC$ given in \eqref{hqfk}
and \eqref{tau}, respectively,  we consider the test
\begin{align}
	\label{test}
	\tFi := \Ind{\{\tSi \geq \tSiC\}}. 
\end{align}
 From \ref{alpha:quantile} in \cref{quantiles} it immediately
 follows that $\tFi$ is a level-$\alpha$-test for all $k
 \in \IN$. To analyse its power over the alternative, we introduce a
 regularity constraint, i.e. a  nonparametric class of functions $\mc
 E$, which is
 formulated in terms of  Fourier coefficients. Let $R > 0$ and let $\wC=
 \Nsuite{\wC[j]}$ be a strictly positive, monotonically
 non-increasing sequence that is bounded by $1$. We assume that the
 difference $\xdf-\xdfO$  belongs to the $\Lp[2]$-ellipsoid
\begin{align}
	\rwC= \lcb \He \in\Lp[2]: 2 \sum_{\expan \in \IN  }  \wdclass^{-2} \lv \fHe[\expan] \rv^2 \leq \rdclass^2 \rcb. \label{ellipsoid} 
\end{align}
Note that $\xdf-\xdfO \in \rwC$ imposes conditions on all coefficients $\xden[\expan]$, $j \in \IZ$, since $\lv f_j \rv^2 =\lv {f_{-j}} \rv^2$, $j \in \IN$, for all real-valued functions and, additionally, $f_0 = 1$ for all densities. 
The definition \eqref{ellipsoid} is general enough to cover classes of
ordinary and super smooth functions.  \cref{quantiles} \ref{beta:1}
allows to characterise elements in $\rwC$ for which $\tFi$
is powerful. Exploiting these results, in the next proposition we
derive an upper bound for the radius of testing of $\tFi$
in terms of $\nu_k$ as in \eqref{nu} and the regularity parameter
$\wC$, that is, we define
 \begin{align*}
   \tSRiK{k} :=\tSRiKN{k}{n}:= \wC[k]^2 \vee \frac{\nu_k^2}{n} .
 \end{align*}
\begin{proposition}
  \label{re:ub}
  Under the assumptions of \cref{quantiles}  
	for  $\alpha \in (0,1)$ define
        \begin{equation}
\oA_\alpha^2 := R^2 +  2 (8R\Vnorm[{\lp[2]}]{\fedf}+826\Vnorm[{\lp[2]}]{\fedf}^2+
    859\Vnorm[{\lp[1]}]{\fydfO}+2744) L_{\alpha/4}^4\label{re:ub:A}.
  \end{equation}
For all  $A \geq \oA_\alpha$ and for all  $n, k \in \IN$ with 
$\nu_k^2 \leq n$ we have 
	$\mc R(\tFi[k,\alpha/2] \mid \rwC, A \tRiK{k}) \leq \alpha$.
\end{proposition}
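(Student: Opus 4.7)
The strategy is to decompose the maximal risk $\mc R(\tFi[k,\alpha/2] \mid \rwC, A\tRiK{k})$ into the type I and type II error probabilities and bound each by $\alpha/2$ using the two parts of \cref{quantiles}. Setting $L := L_{\alpha/4}$ (which dominates $L_{\alpha/2}$ since $L_x$ is decreasing in $x$), the type I statement \cref{quantiles}\ref{alpha:quantile} applied at level $\alpha/2$ directly gives $\FuVg{\xdfO}(\tSi \geq \tSiC) \leq \alpha/2$. The substantive work is to bound the type II error, for which I would apply \cref{quantiles}\ref{beta:1} with $\beta = \alpha/2$, after verifying the separation condition \eqref{separation} uniformly over $\xdf$ with $\xdf - \xdfO \in \rwC$ and $\VnormLp{\xdf - \xdfO}^2 \geq A^2 \tRiK{k}$.

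For a lower bound on the left-hand side of \eqref{separation}, I would exploit that $\wC$ is non-increasing together with the ellipsoid constraint \eqref{ellipsoid} to bound the truncation tail
\[
\qF(\xdf - \xdfO) - \qF_k(\xdf - \xdfO) = \sum_{|j|>k} |\fxdf[j] - \fxdfO[j]|^2 \leq \wC[k]^2 \sum_{|j|>k} \wC[j]^{-2} |\fxdf[j] - \fxdfO[j]|^2 \leq R^2 \wC[k]^2,
\]
whence $\qF_k(\xdf - \xdfO) \geq A^2 \tRiK{k} - R^2 \wC[k]^2 \geq (A^2 - R^2)\tRiK{k}$. For an upper bound on the right-hand side, the hypothesis $\nu_k^2 \leq n$ together with $m_k \leq \nu_k$ collapses every power of $\nu_k/n^{1/2}$ and $\nu_k^2/n$ appearing in $\tSiC$ and in the remainder term to at most $1$, and routine bookkeeping of the numerical constants yields
\[
2\bigl(\tSiC + c_3 L^4 (1 \vee \nu_k^2/n)\,\nu_k^2/n\bigr) \leq 2(c_1 + c_2 + c_3) L^4 \nu_k^2/n \leq 2(c_1 + c_2 + c_3) L^4 \tRiK{k}.
\]
Combining the two bounds, the separation condition \eqref{separation} holds whenever $A^2 - R^2 \geq 2(c_1 + c_2 + c_3) L^4$.

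The main obstacle is that $c_3 = 8\Vnorm[{\lp[1]}]{\fydf} + 826\Vnorm[{\lp[2]}]{\fedf}^2 + 1372$ depends on the unknown density $\xdf$ through $\fydf$, and must be replaced by a bound uniform over $\rwC$. I would use the triangle inequality, Cauchy--Schwarz, and the ellipsoid estimate $\VnormLp{\xdf - \xdfO} \leq R$ to derive
\[
\Vnorm[{\lp[1]}]{\fydf} \leq \Vnorm[{\lp[1]}]{\fydf - \fydfO} + \Vnorm[{\lp[1]}]{\fydfO} \leq \Vnorm[{\lp[2]}]{\fxdf - \fxdfO}\Vnorm[{\lp[2]}]{\fedf} + \Vnorm[{\lp[1]}]{\fydfO} \leq R\Vnorm[{\lp[2]}]{\fedf} + \Vnorm[{\lp[1]}]{\fydfO},
\]
and combine it with $\Vnorm[{\lp[2]}]{\fydfO} \leq \Vnorm[{\lp[1]}]{\fydfO}$ to reach $2(c_1 + c_2 + c_3) \leq 2\bigl(8R\Vnorm[{\lp[2]}]{\fedf} + 826\Vnorm[{\lp[2]}]{\fedf}^2 + 859\Vnorm[{\lp[1]}]{\fydfO} + 2744\bigr)$. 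This is precisely the coefficient of $L_{\alpha/4}^4$ in $\oA_\alpha^2 - R^2$ as given in \eqref{re:ub:A}, so $A \geq \oA_\alpha$ guarantees the separation condition and thereby the type II bound, completing the proof.
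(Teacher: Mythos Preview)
Your proposal is correct and follows essentially the same argument as the paper: split the risk into type I and type II errors, handle type I directly via \cref{quantiles}\ref{alpha:quantile}, and for type II verify the separation condition \eqref{separation} by combining the ellipsoid tail bound $\sum_{|j|>k}|\fxdf[j]-\fxdfO[j]|^2\le R^2\wC[k]^2$ with the simplification $\nu_k^2\le n$ of the threshold, then eliminate the $\xdf$-dependence in $c_3$ via the triangle and Cauchy--Schwarz inequalities exactly as you describe. The bookkeeping of constants leading to $2(c_1+c_2+c_3)\le\oA_\alpha^2-R^2$ (after bounding $\Vnorm[{\lp[2]}]{\fydfO}\le\Vnorm[{\lp[1]}]{\fydfO}$) matches the paper's computation line by line.
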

\begin{proof}[Proof of \cref{re:ub}]
  We apply \cref{quantiles}  to show that both the type I and the
  maximal type II error probability are bounded
  by $\alpha/2$, then the result follows immediately from the definition of the risk
  $\trisk[{\tFi[k,\alpha/2]} \mid \rwC, A \tRiK{k}] =
  \FuVg{\xdfO}({\tFi[k,\alpha/2]} = 1) + \sup_{\substack{ \xden -
      \xoden \in \rwC\cap\Lp[2]_{A \tRiK{k}} }}
  \FuVg{\xdf}({\tFi[k,\alpha/2]} = 0)\leq \alpha/2+\alpha/2=\alpha$.
  Since the assumption of \cref{quantiles} \ref{alpha:quantile}  is  fulfilled the test $\tFi[k,\alpha/2]$ is a level-$\alpha/2$-test. Hence,   in order to apply
  \cref{quantiles} \ref{beta:1} (with
  $\beta = \alpha/2$)  for  each density $\xdf\in\Lp[2]$ with
  $\Vnorm[{\Lp[2]}]{\xdf-\xdfO}^2 \geq \oA_\alpha^2
  \tSRiK{k}$ and $\xdf - \xdfO \in \rwC$ 
 it remains  to
  verify condition \eqref{separation} which  states
     $ \qF_k(\xdf - \xdfO) \geq  2\big( \tSiCa{{\alpha}/{2}}  +
      c_3 L_{\alpha/4}^4 ( 1  \vee
      \nu_k^2n^{-1}) \nu_k^2n^{-1}\big)              $
    with $\tSiCa{{\alpha}/{2}} $  as in \eqref{tau}. Indeed, in this situation we have
  $ \sum_{\lv j \rv > k} \lv \xden[j] - \xoden[j] \rv^2 \leq
  \wdclass[k]^2 \rdclass^2$, which implies 
  \begin{multline}\label{re:ub:bew:e1}
    \qF_k(\xden -\xoden)  = \Vnorm[{\Lp[2]}]{\xdf-\xdfO}^2-
    \sum_{\lv j \rv > k} \lv \xden[j] - \xoden[j] \rv^2
    \geq \oA_\alpha^2 \tSRiK{k} - \wdclass[k]^2 \rdclass^2 \\\hfill
    \geq2(8R\Vnorm[{\lp[2]}]{\fedf}+826\Vnorm[{\lp[2]}]{\fedf}^2+
    859\Vnorm[{\lp[1]}]{\fydfO}+2744) L_{\alpha/4}^4 {\nu_k^2}n^{-1}\hfill\\
    \geq 2(8\Vnorm[{\lp[1]}]{\fydf}+826\Vnorm[{\lp[2]}]{\fedf}^2+
    851\Vnorm[{\lp[1]}]{\fydfO}+2744) L_{\alpha/4}^4 {\nu_k^2}n^{-1},
  \end{multline}
  using the triangular inequality
  $\Vnorm[{\lp[1]}]{\fydf}\leq\Vnorm[{\lp[1]}]{\fydf-\fydfO}+\Vnorm[{\lp[1]}]{\fydfO}$
  and the Cauchy-Schwarz inequality  $\Vnorm[{\lp[1]}]{\fydf-\fydfO}\leq\Vnorm[{\lp[2]}]{\fxdf-\fxdfO}\Vnorm[{\lp[2]}]{\fedf}\leq
  R\Vnorm[{\lp[2]}]{\fedf}$ by $\Vnorm[{\lp[2]}]{\fxdf-\fxdfO}\leq R$.
  The  condition \eqref{separation} follows 
  from \eqref{re:ub:bew:e1} by
  exploiting further  $1\leq L_{\alpha/2}\leq L_{\alpha/4}$, $\Vnorm[{\lp[2]}]{\fydfO}\leq\Vnorm[{\lp[1]}]{\fydfO}$ and 
  $m_k^2\leq \nu_k^2\leq n$, which completes the proof. 
\end{proof}
Let us introduce a dimension that  realises an optimal bias-variance
trade-off and the corresponding radius
\begin{multline}\label{tDi}
  \tDi := \argmin\limits_{k \in \IN} \tSRiK{k} := \min \lcb k \in
  \IN: \tSRiK{k} \leq \tSRiK{l} \text{ for all } l \in \IN
  \rcb\quad\text{ and}\\
  \tSRi:=\tSRiN{n}:=  \min_{k \in \IN}\tSRiKN{k}{n}= \min_{k \in \IN} \{ a_k^2 \vee \nu_k^2n^{-1}  \}.
\end{multline}
\begin{corollary}
	\label{re:m:ub}
  Under the assumptions of \cref{quantiles}  
 let  $\alpha \in (0,1)$ and $\oA_\alpha$ as in \eqref{re:ub:A}, 
then 	$\mc R(\tFi[\tDi,\alpha/2] \mid \rwC, A \tRi) \leq \alpha$ for all $A \geq \oA_\alpha$ and  $n \geq \sqrt{2} \lv \epsden[1] \rv^{-2}$.
\end{corollary}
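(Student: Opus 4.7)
The plan is to derive this corollary as an immediate consequence of \cref{re:ub} applied with the data-independent choice $k = \tDi$, so the only real work is to verify that the dimension assumption $\nu_{\tDi}^2 \leq n$ of \cref{re:ub} is indeed satisfied whenever the sample size obeys $n \geq \sqrt{2}|\epsden[1]|^{-2}$. Once this is checked, the inequality $\trisk[\tFi[\tDi,\alpha/2] \mid \rwC, A\,\tRi] \leq \alpha$ for $A \geq \oA_\alpha$ follows directly, since by definition of $\tDi$ in \eqref{tDi} we have $\tSRiK{\tDi} = \tSRi$.

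To check the dimension condition, I would first evaluate $\nu_1$ from \eqref{nu}: since $\nset{1}=\{1\}$, summation over $|j|\in\nset{1}$ runs over $j\in\{-1,+1\}$, giving $\nu_1^4 = 2|\epsden[1]|^{-4}$ and hence $\nu_1^2 = \sqrt{2}\,|\epsden[1]|^{-2}$. The hypothesis $n \geq \sqrt{2}|\epsden[1]|^{-2}$ therefore amounts to $\nu_1^2/n \leq 1$. Combined with the fact that the sequence $\wC$ defining $\rwC$ in \eqref{ellipsoid} is bounded by $1$, in particular $a_1 \leq 1$, this yields $\tSRiK{1} = a_1^2 \vee \nu_1^2/n \leq 1$.

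Now by the defining minimality of $\tDi$ we have $\tSRiK{\tDi} \leq \tSRiK{1} \leq 1$. Since $\tSRiK{\tDi} = a_{\tDi}^2 \vee \nu_{\tDi}^2/n$, this forces both $a_{\tDi}^2 \leq 1$ and $\nu_{\tDi}^2/n \leq 1$, so the hypothesis $\nu_{\tDi}^2 \leq n$ of \cref{re:ub} is in force. Applying \cref{re:ub} with $k = \tDi$ then gives $\trisk[\tFi[\tDi,\alpha/2] \mid \rwC, A\,\tSRiK{\tDi}] \leq \alpha$ for every $A \geq \oA_\alpha$, which is the claimed inequality.

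I do not foresee any real obstacle: the argument is essentially a bookkeeping step, and the only mild subtlety is the explicit computation of $\nu_1$, which requires recalling that the index set $\nset{k}$ is traversed with $|j|\in\nset{k}$ so that both signs contribute. Note that the choice of $\tDi$ in \eqref{tDi} is deterministic (it depends on $\wC$, $\epsden$ and $n$, but not on the data), so no union bound or additional correction is needed in passing from \cref{re:ub} to this corollary.
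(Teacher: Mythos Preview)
Your proposal is correct and follows essentially the same route as the paper: both arguments verify the hypothesis $\nu_{\tDi}^2\leq n$ of \cref{re:ub} via the chain $1\geq \tSRiK{1}\geq \tSRi\geq \nu_{\tDi}^2 n^{-1}$, and then invoke \cref{re:ub} with $k=\tDi$. Your write-up is slightly more explicit (spelling out $\nu_1^2=\sqrt{2}\,|\fedf[1]|^{-2}$ and the role of $a_1\leq 1$), but there is no substantive difference.
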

\begin{proof}[Proof of \cref{re:m:ub} ]The result follows immediately
  from \cref{re:ub}, since $\nu_{\tDi}^2\leq n$ for all $n \geq
  \sqrt{2} \lv \epsden[1] \rv^{-2}$. Indeed, $n \geq \sqrt{2}
  \lv \epsden[1] \rv^{-2}$ implies $1\geq \tSRiK{1}\geq \tSRi\geq
  \nu_{\tDi}^2 n^{-1}$.
\end{proof}
We shall emphasise that in the case $\xdfO =
\expb[0]=\mathds{1}_{[0,1)}$ the radius of testing $\tRi$ is known to
be  minimax (\cite{SchluttenhoferJohannes2020a}) and, hence, the test $\tFi[\tDi,\alpha/2] $ is minimax optimal.

\begin{illustration}\label{ill}
  Throughout the paper we illustrate the order of the radii of testing
  under typical regularity and ill-posedness assumptions.  For two
  real-valued sequences $(x_j)_{j \in \IN}$ and $(y_j)_{j \in \IN}$ we
  write $x_j \lesssim y_j$ if there exists a constant $c > 0$ such
  that $x_j \leq c y_j$ for all $j \in \IN$. We write $x_j \sim y_j$,
  if both $x_j \lesssim y_j$ and $y_j \lesssim x_j$. Concerning the
  class $\rwC$ we distinguish two behaviours of the sequence $\wC$,
  namely the \textbf{ordinary smooth} case
  $\wC[j] \sim { j^{-\sPara}}$ for $\sPara > 1/2$, corresponding to a
  Sobolev ellipsoid, and the \textbf{super smooth} case
  $\wC[j] \sim {\exp(-j^{\sPara})}$ for $\sPara > 0$, corresponding to
  a class of analytic functions.  We also distinguish two cases for
  the regularity of the error density $\epsden[]$. For $\pPara>1/2$ we
  consider a \textbf{mildly ill-posed} model
  $\lv \epsden[j] \rv \sim \lv j \rv ^{-\pPara}$ and for $p > 0$ a
  \textbf{severely ill-posed} model
  $ \lv \epsden[j] \rv \sim {\exp(-\lv j \rv^{\pPara})}$.  Many
  examples of circular densities can be found in Chapter 3 of
  \cite{MardiaJupp2009}.  The table below presents the order of the
  dimension $\tDi$ and the upper bound $\tSRi$ for the radius of testing (see the appendix of
  \cite{SchluttenhoferJohannes2020a} for the calculations).\\ [3ex]
  \centerline{\begin{tabular}{ll|l|l}\toprule \multicolumn{4}{c}{Order
        of the optimal dimension $\tDi$ and the radius
        $\tSRi$}\\ \midrule
                ${\wC[j]}$ & ${\lv \fedf[j] \rv}$ & $\tDi$ & $\tSRi$   \\
                (smoothness) & (ill-posedness) & & \\
                \midrule
                ${ j^{-\sPara}}$ & ${\lv j \rv^{-\pPara}}$ &  $n^{\tfrac{2}{4\pPara + 4\sPara + 1}}$ & $\say^{-\tfrac{4\sPara}{4\sPara + 4 \pPara + 1}}$  \\
                ${ j^{-\sPara}}$ & 	$ {e^{-\lv j \rv^{\pPara}}}$ & $(\log n)^{\tfrac{1}{\pPara}}$ & $(\log \say)^{-\tfrac{2\sPara}{\pPara}}$  \\
                ${e^{-j^{\sPara}}}$ & $ {\lv j \rv^{-\pPara}}$& $ (\log n)^{\tfrac{1}{\sPara}}$ & $\say^{-1} (\log n)^{\tfrac{2 \pPara + 1/2}{\sPara}}$   \\
                \bottomrule
 				\end{tabular}}
  		\end{illustration}

\section{Adaptive  indirect testing procedure}
\label{sec:a:u:b}
  For an arbitrary regularity parameter $\wC$ the test
  $\tFi[\tDi,\alpha/2]$ in \cref{re:m:ub} achieves the minimax radius
  of testing $\tSRi$. However, it relies via the dimension parameter
  $\tDi$ on the regularity class $\rwC$ and, thus, the testing
  procedure is not \textit{adaptive},
  i.e. \textit{assumption-free}. Ideally, a test should perform
  optimally for a wide range of regularity parameters. In this section
  we therefore propose an adaptive testing procedure by aggregating
  the test in \eqref{test} over various dimension
  parameters. 
  \paragraph{Adaptation procedure via Bonferroni aggregation.}  Let
  $\mc K \subseteq \IN$ be a finite collection of dimension
  parameters. For $k \in \cK$ and the level
  $\alpha/|\cK|\in (0,1)$ recall the
  collection of tests
  $(\tFi[k,\alpha/|\cK|])_{k \in\cK} = (\Ind{\{\tSi > \tSiCa{\alpha/|\cK|}\}})_{k \in \mc K}$ defined in
  \eqref{test}. 
  We
  consider the $\max$-test
\begin{align}\label{de:tFia}
	\tFia := \Ind{\{ \tSiM > 0\}} \qquad \text{ with } \qquad \tSiM: = \max_{k \in \mc K} \lb \tSi - \tSiCa{\tfrac{\alpha}{|\cK|}}\rb,
\end{align}
	i.e. the test rejects the null hypothesis as soon as one of
        the tests in the collection does.
        Under the null hypothesis, we bound the type I error probability of the $\max$-test by the sum of the error probabilities of the individual tests,
	\begin{align}
		\label{adapt:null}
		\Pf[\xoden](\tFi[\cK,\alpha]=1) & = \Pf[\xoden](\tSiM > 0) \leq \sum_{k \in \mc K} \Pf[\xoden](\tFi[k,\alpha/|\cK|]=1) \leq \sum_{k \in \mc K} \frac{\alpha}{|\cK|} = \alpha.
	\end{align}
	Hence, $\tFi[\cK,\alpha]$ is a level-$\alpha$-test, since
 $\tFi[k,\alpha/|\cK|]$ is a level-$\alpha/|\cK|$-test       for
  each $k \in \cK$ due to \cref{quantiles} \ref{alpha:quantile}.   Under the alternative, we can bound the type II error probability by the error probability of any of the individual tests,
	\begin{align}
		\label{adapt:alternative}
		\Pf[\xden](\tFi[\cK,\alpha]= 0) = \Pf[\xden](\tSiM \leq  0) \leq \min_{k \in \mc K} \Pf[\xden](\tFi[k,\alpha/|\cK|]=0).
	\end{align}
	Therefore, $\tFi[\cK,\alpha]$ has the maximal power achievable
        by a test in the collection. The bounds \eqref{adapt:null} and
        \eqref{adapt:alternative} have opposing effects on the choice
        of the collection $\mc K$. On the one hand, it should be as
        small as possible to keep the type I error probability
        small. On the other hand, it must be large enough to contain
        an optimal dimension parameter $\tDi$ for a wide range of
        regularity parameters $\wC$, that we want to adapt to. In this
        paper we consider a classical Bonferroni choice of an error
        level $\alpha/|\cK|$. For other aggregation choices, e.g. a
        Monte-Carlo quantile and a Monte-Carlo threshold method we
        refer to \cite{LaurentHuetBaraud2003} and
        \cite{FromontLaurent2006}. Although the Bonferroni choice is a
        more conservative method, we  show its optimality, which is
        than shared with the other methods.

\paragraph{Testing radius of the indirect $\max$-test.}
Denote by $\mc A \subseteq \IR^\IN_{> 0}$ a set of strictly positive,
monotonically non-increasing sequences bounded by $1$. The set $\mc A$
characterises the collection of regularity classes  $\lcb \rwC: \wC
\in \mc A \rcb$, for which  the power of the testing procedure is
analysed simultaneously.
The $\max$-test $\tFia$ in \eqref{de:tFia}  only aggregates over
a finite set $\mc K \subset \IN$. For each 
$\wC \in \altcollect$ we define a minimal achievable
radius of testing $\tSRiaN{n}$  over the set $\mc K$
as
\begin{equation*}
  \tSRiaN{x} := \min_{k \in \mc K}\tSRiKN{k}{x} \quad\text{ with
        }\quad\tSRiKN{k}{x}:=  a_k^2 \vee \frac{\nu_k^2}{x} \quad\text{ for all }x\in\pRz. 
\end{equation*}
Since $\tSRiN{n}=\tSRiKN{\Nz\,}{n}$ in \eqref{tDi} is defined as the  minimum taken over $\IN$
instead of $\mc K$, for  $n\in\Nz$ we always have $\tSRiaN{n} \geq
\tSRiN{n}$. Moreover, replacing $\nu_k$ by $m_k$ as in \eqref{nu}, let
us define a remainder radius $\tSReaN{n}$, typically negligible
compared to $\tSRiaN{n}$, as follows
\begin{equation}\label{tSReaN}
        \tSReaN{x}:= \min_{k \in \mc K} \tSReaKN{x} \quad\text{ with
        }\quad\tSReaKN{x}:=  a_k^2 \vee \frac{m_k^2}{x}  \quad\text{ for all  }x\in\pRz. 
\end{equation}

\begin{proposition}[Uniform radius of testing over $\altcollect$]
  \label{adapt:ub}
  Under the assumptions of \cref{quantiles}  
  let  $\alpha \in (0,1)$ and consider $\oA_\alpha$ as in \eqref{re:ub:A}.
    Then, for all $A \geq \oA_\alpha$ and $n\in\Nz$
  \begin{equation*}
    \sup_{\wC \in \altcollect} \trisk[{\tFia[\cK,\alpha/2]}
    \mid\rwC,A(\tReaN{\delta^2n}\vee \tRiaN{\delta n})(1\vee\delta^{-3/2}\tRiaN{\delta n})]\leq\alpha
  \end{equation*} 
  with $\delta=  (1+\log \lv \mc K \rv)^{-1/2}$.
\end{proposition}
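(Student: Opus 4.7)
The plan is to bound the type I and type II errors of the aggregated test $\tFia[\cK,\alpha/2]$ separately by $\alpha/2$, in each case via \cref{quantiles} applied to an individual test at the shrunken Bonferroni level $\alpha/(2|\cK|)$. The type I bound is immediate: since $\gLp[\ydfO]=\RLp$ is inherited from \cref{quantiles}, each $\tFi[k,\alpha/(2|\cK|)]$ is a level-$\alpha/(2|\cK|)$ test by part \ref{alpha:quantile}, and a union bound exactly as in \eqref{adapt:null} yields $\FuVg{\xdfO}(\tFia[\cK,\alpha/2]=1)\leq\alpha/2$. For the type II bound, \eqref{adapt:alternative} reduces the task to exhibiting a single $k^\star\in\cK$ for which \cref{quantiles}\ref{beta:1}, applied with test level $\alpha/(2|\cK|)$ and failure probability $\beta=\alpha/2$, yields $\FuVg{\xdf}(\tFi[k^\star,\alpha/(2|\cK|)]=0)\leq\alpha/2$ uniformly in $\wC\in\altcollect$ and in $\xdf$ lying in the prescribed alternative.

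I would choose $k^\star\in\argmin_{k\in\cK}(a_k^2\vee\nu_k^2/(\delta n))$, so that $a_{k^\star}^2\leq\tRiaN{\delta n}$, $\nu_{k^\star}^2/(\delta n)\leq\tRiaN{\delta n}$, and in particular $\nu_{k^\star}^2/n\leq\delta\tRiaN{\delta n}$. Mimicking \eqref{re:ub:bew:e1}, the regularity constraint $\xdf-\xdfO\in\rwC$ yields $\qF_{k^\star}(\xdf-\xdfO)\geq\Vnorm[{\Lp[2]}]{\xdf-\xdfO}^2-a_{k^\star}^2\rdclass^2$, so the separation condition \eqref{separation} reduces to a deterministic upper bound on $\tSiCa{\alpha/(2|\cK|)}$, together with the Bernstein remainder, at $k=k^\star$ in terms of $(\tReaN{\delta^2 n}\vee\tRiaN{\delta n})(1\vee\delta^{-3/2}\tRiaN{\delta n})$. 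The central numerical input is the elementary bound $L_{\alpha/(2|\cK|)}^2=1+\log(2/\alpha)+\log|\cK|\leq(1+\log(2/\alpha))\delta^{-2}$, which converts every $L_{\alpha/(2|\cK|)}$-factor entering \eqref{tau} into a $\delta^{-1}$-factor up to an $\alpha$-dependent constant.

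Under this conversion the summands of the right-hand side of \eqref{separation} at $k^\star$ translate as follows: $L_{\alpha/(2|\cK|)}\nu_{k^\star}^2/n\lesssim\nu_{k^\star}^2/(\delta n)\leq\tRiaN{\delta n}$; the linear-statistic term $L_{\alpha/(2|\cK|)}^2 m_{k^\star}^2/n\lesssim m_{k^\star}^2/(\delta^2 n)$ is absorbed into $\tReaN{\delta^2 n}\vee\tRiaN{\delta n}$ by a short case distinction based on whether $m_{k^\star}^2/(\delta^2 n)$ exceeds $\tReaN{\delta^2 n}$; and the higher-order U-statistic contributions $L_{\alpha/(2|\cK|)}^3\nu_{k^\star}^3/n^{3/2}$ and $L_{\alpha/(2|\cK|)}^4(\nu_{k^\star}^2/n)^2$, together with the remainder $L_{\alpha/4}^4(1\vee\nu_{k^\star}^2/n)\nu_{k^\star}^2/n$, are estimated by factoring out $\nu_{k^\star}^2/(\delta n)\leq\tRiaN{\delta n}$ and invoking $\nu_{k^\star}^2/n\leq\delta\tRiaN{\delta n}$, which produces precisely the blow-up factor $(1\vee\delta^{-3/2}\tRiaN{\delta n})$ appearing in the statement. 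Assembling these bounds and fixing $A\geq\oA_\alpha$ large enough that the implicit constants absorb into $\oA_\alpha^2$, the separation \eqref{separation} holds at $k^\star$, and \cref{quantiles}\ref{beta:1} concludes the argument. The main obstacle is exactly this last term-by-term bookkeeping, since the U-statistic variance summands drive the $\delta^{-3/2}$-factor in the radius while the linear-statistic $m_k$-variance lives on the distinct $\delta^{-2}$-scale responsible for $\tReaN{\delta^2 n}$ in the maximum, and the two scales must be simultaneously absorbed into a single radius expression.
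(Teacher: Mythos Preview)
Your overall architecture matches the paper exactly: bound the type~I error by the Bonferroni union bound \eqref{adapt:null}, and for the type~II error pick a single $k\in\cK$ and invoke \cref{quantiles}\ref{beta:1} at level $\alpha/(2|\cK|)$ with $\beta=\alpha/2$, using $L_{\alpha/(2|\cK|)}^2\leq L_{\alpha/2}^2\delta^{-2}$ to convert the Bonferroni penalty into powers of $\delta^{-1}$. Your handling of all the $\nu_k$-contributions in the threshold is also correct and coincides with the paper's bookkeeping.

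The gap is in your choice of $k^\star$ and the promised ``short case distinction'' for the linear term $m_{k^\star}^2/(\delta^2 n)$. Taking $k^\star\in\argmin_{k\in\cK}\{a_k^2\vee\nu_k^2/(\delta n)\}$ gives only $m_{k^\star}^2/(\delta^2 n)\leq\nu_{k^\star}^2/(\delta^2 n)\leq\delta^{-1}\tSRiaN{\delta n}$, which is \emph{not} in general bounded by $\tSReaN{\delta^2 n}\vee\tSRiaN{\delta n}$: the minimiser of the $r$-radius $a_k^2\vee m_k^2/(\delta^2 n)$ can lie at a strictly smaller index $k'<k^\star$, in which case $a_{k^\star}^2\vee m_{k^\star}^2/(\delta^2 n)$ may exceed $\tSReaN{\delta^2 n}$ by a factor of order $\delta^{-1}$. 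In the regime $\tSRiaN{\delta n}<\delta^2$ the multiplicative factor $(1\vee\delta^{-3}\tSRiaN{\delta n})$ equals $1$ and therefore cannot absorb this loss, so \eqref{adapt:ub:bew:c1:e} fails at your $k^\star$. A concrete obstruction arises already when $\cK$ contains two indices $k_1<k_2$ with $|\fedf[j]|=1$ for $j\le k_1$ and $|\fedf[k_2]|$ very small, $a_{k_1}^2$ just above $\tSRiaN{\delta n}$ and $a_{k_2}^2$ below it: then $k^\star=k_2$, the $r$-minimum is at $k_1$, and $m_{k_2}^2/(\delta^2 n)\approx\delta^{-1}\tSRiaN{\delta n}/\sqrt{2}$ exceeds $\tSReaN{\delta^2 n}\vee\tSRiaN{\delta n}\approx\tSRiaN{\delta n}$.

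The paper avoids this by \emph{not} taking the $\rho$-argmin. Instead it invokes Lemma~A.1 of \cite{SchluttenhoferJohannes2020}, which produces a (generally different) $k\in\cK$ satisfying simultaneously
\[
a_k^2\vee\tfrac{m_k^2}{\delta^2 n}\vee\tfrac{\nu_k^2}{\delta n}=\tSReaN{\delta^2 n}\vee\tSRiaN{\delta n}
\quad\text{and}\quad
\tfrac{\nu_k^2}{\delta n}\leq\tSRiaN{\delta n}.
\]
This is precisely the coupling of the two bias--variance trade-offs that your case distinction cannot supply. Once this $k$ is in hand, the rest of your term-by-term estimates go through verbatim and yield \eqref{adapt:ub:bew:c1:e}.
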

\begin{proof}[Proof of \cref{adapt:ub}] For each $\wC\in\altcollect$
  we apply \cref{quantiles} to show that both the type I and the
  maximal type II error probability are bounded by $\alpha/2$, then
  the result follows immediately from the definition of the risk.
  Under the null hypothesis, the claim follows from \eqref{adapt:null}
  together with 
  \cref{quantiles} \ref{alpha:quantile}  and
  $\sum_{k \in \mc K} \frac{\alpha}{2|\cK|} = \alpha/2$.  Under the
  alternative, let $\xdf\in\Lp[2]$ with $\xdf-\xdfO\in
  \rwC$ satisfy
  \begin{equation}
    \label{adapt:ub:bew:e1}
    \Vnorm[{\Lp[2]}]{\xdf-\xdfO}^2 \geq \oA_\alpha^2(\tSReaN{\delta^2n}\vee \tSRiaN{\delta n})(1\vee\delta^{-3}\tSRiaN{\delta n}).
  \end{equation}
  It sufficient to use the elementary  bound \eqref{adapt:alternative}
  together with the observation that
  \begin{resListeN}
  \item\label{adapt:ub:bew:c1} $ \FuVg{\xdf}(
    \tSi < \tSiCa{\tfrac{\alpha}{2|\cK|}})\leq \alpha/2$ holds for all $k\in\cK$  whenever
    $\xdf\in\Lp[2]$ satisfies $\xdf-\xdfO\in
  \rwC$ and
  \begin{equation}
    \label{adapt:ub:bew:c1:e}
    \Vnorm[{\Lp[2]}]{\xdf-\xdfO}^2 \geq \oA_\alpha^2 \lb \wC[k]^2\vee(1 \vee
  \frac{\nu_k}{\delta^2n^{1/2}} \vee \frac{\nu_k^2}{\delta^3n}
  ) \frac{\nu^2_k}{\delta n} \vee \frac{m^2_k}{\delta^2n}\rb;
  \end{equation}
\item \label{adapt:ub:bew:c2} under \eqref{adapt:ub:bew:e1}  there exists $k\in\cK$ such that   \eqref{adapt:ub:bew:c1:e}
  is fulfilled. 
\end{resListeN}    
As a consequence,  we
have $\FuVg{\xdf}(\tFia[\cK,\alpha/2] = 0)\leq \alpha/2$ for all
$\xdf\in\Lp[2]$ satisfying  $\xdf-\xdfO\in
  \rwC$ and \eqref{adapt:ub:bew:e1}, and thus  the maximal type II
  error probability is also bounded  by
$\alpha/2$. It remains to show \ref{adapt:ub:bew:c1} and
\ref{adapt:ub:bew:c2}. The claim \ref{adapt:ub:bew:c1} follows from
\cref{quantiles} \ref{beta:1} (with  $\beta = \alpha/2$)  since
\eqref{adapt:ub:bew:c1:e} implies the
  condition \eqref{separation},   which  states
     $ \qF_k(\xdf - \xdfO) \geq  2\big( \tSiCa{\tfrac{\alpha}{2|\cK|}}  +
      c_3 L_{\alpha/4}^4 ( 1  \vee
      \nu_k^2n^{-1}) \nu_k^2n^{-1}\big)              $
    with $\tSiCa{\tfrac{\alpha}{2|\cK|}} $  as in \eqref{tau}. Indeed,  exploiting  $L_{\alpha/2}^2=\log(2e/\alpha)\geq1$ and hence
$L_{\alpha/(2|\cK|)}^2= \log |\cK|+  L_{\alpha/2}^2\leq
L_{\alpha/2}^2(1+\log |\cK|)= L_{\alpha/2}^2\delta^{-2}$, we have 
\begin{multline*}
\tSiCa{\tfrac{\alpha}{2|\cK|}} \leq c_1 L_{\alpha/2}^4  \lb 1 \vee
  \frac{\nu_k}{\delta^2n^{1/2}} \vee \frac{\nu_k^2}{\delta^3n}
  \rb \frac{\nu^2_k}{\delta n} + c_2 L^2_{\alpha/2}
  \frac{m^2_k}{\delta^2n}\\\leq (c_1+c_2)L_{\alpha/2}^4  \lb (1 \vee
  \frac{\nu_k}{\delta^2n^{1/2}} \vee \frac{\nu_k^2}{\delta^3n}
  ) \frac{\nu^2_k}{\delta n} \vee \frac{m^2_k}{\delta^2n}\rb.
\end{multline*}
Using $L_{\alpha/4} \geq L_{\alpha/2}$,
$1 \geq \delta$ and $\oA_\alpha^2-\rdclass^2\geq
2(c_1+c_2+c_3)L_{\alpha/4}^4$ by elementary calculations
the condition \eqref{separation} holds whenever
\begin{equation}\label{cond}
  \qF_k(\xdf - \xdfO) \geq  \big(\oA_\alpha^2-\rdclass^2\big)
  \lb (1 \vee
  \frac{\nu_k}{\delta^2n^{1/2}} \vee \frac{\nu_k^2}{\delta^3n}
  ) \frac{\nu^2_k}{\delta n} \vee \frac{m^2_k}{\delta^2n}\rb.
\end{equation}	
Due to  $\xdf-\xdfO\in
  \rwC$ and hence $ \sum_{\lv j \rv > k}  |\fxdf[j] - \fxdfO[j]|^2 \leq
  \wdclass[k]^2 \rdclass^2$,  the condition \eqref{adapt:ub:bew:c1:e} implies
 \begin{equation*}
    \qF_k(\xden -\xoden)  = \Vnorm[{\Lp[2]}]{\xdf-\xdfO}^2-
    \sum_{\lv j \rv > k} |\fxdf[j] - \fxdfO[j]|^2\geq
    \big(\oA_\alpha^2-\rdclass^2\big)
  \lb (1 \vee
  \frac{\nu_k}{\delta^2n^{1/2}} \vee \frac{\nu_k^2}{\delta^3n}
  ) \frac{\nu^2_k}{\delta n} \vee \frac{m^2_k}{\delta^2n}\rb.
\end{equation*}
Consequently, if \eqref{adapt:ub:bew:c1:e} is satisfied, then also \eqref{cond} and thus \eqref{separation}, which shows the claim
\ref{adapt:ub:bew:c1}. Lastly, consider \ref{adapt:ub:bew:c2}. By Lemma A.1 in
\cite{SchluttenhoferJohannes2020} we have  $\tSReaN{\delta^2n}\vee \tSRiaN{\delta
  n}
=\wC[k]^2\vee
\tfrac{m_k^2}{\delta^{2}n}\vee \tfrac{\nu_k^2}{\delta n}$ and $\tSRiaN{\delta
  n}\geq \tfrac{\nu_k^2}{\delta n}$ for  at least one $k \in
\cK$. 
Hence, there is $k\in\cK$ with $\big(\tSReaN{\delta^2n}\vee\tSRiaN{\delta n}\big)(1\vee\tfrac{\tSRiaN{\delta
      n}}{\delta^{3}})\geq
  \wC[k]^2\vee\tfrac{m_k^2}{\delta^{2}n}\vee\big( \tfrac{\nu_k^2}{\delta
    n}(1\vee \tfrac{\nu_k^2}{\delta^{4}n})\big)$. Since $1 \vee
 \tfrac{\nu_k^2}{\delta^4n}\geq1 \vee
  \frac{\nu_k}{\delta^2n^{1/2}} \vee \frac{\nu_k^2}{\delta^3n}
  $, this shows
  \ref{adapt:ub:bew:c2} and completes the proof.
\end{proof}

\begin{corollary}[Worst-case adaptive factor]
  \label{cor:worst} Under the assumptions of \cref{quantiles} let
  $\alpha \in (0,1)$ and consider  $\oA_\alpha$ as in \eqref{re:ub:A}.
     Then, for all $A \geq
  \oA_\alpha$ and $n\in\Nz$
  \begin{equation*}
    \sup_{\wC \in \altcollect} \trisk[{\tFia[\cK,\alpha/2]} \mid \rwC, A
    \tRiaN{\delta^2 n}(1\vee\tRiaN{\delta^2 n})]  \leq \alpha
  \end{equation*}
 with $\delta=  (1+\log \lv \mc K \rv)^{-1/2}$.
\end{corollary}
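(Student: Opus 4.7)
The plan is to derive the corollary directly from \cref{adapt:ub} via the monotonicity of the testing risk $\mc R(\cdot\mid \rwC, \sera)$ in the separation radius $\sera$: since the alternative set $\Lp[2]_{\sera}\cap\rwC$ shrinks as $\sera$ grows, the type~II supremum, and hence the entire risk, is non-increasing in $\sera$. It therefore suffices to show that the radius appearing in \cref{adapt:ub} is dominated by the corollary's radius, that is,
\begin{equation*}
\big(\tReaN{\delta^2 n}\vee \tRiaN{\delta n}\big)\big(1\vee\delta^{-3/2}\tRiaN{\delta n}\big)\;\leq\; \tRiaN{\delta^2 n}\big(1\vee\tRiaN{\delta^2 n}\big),
\end{equation*}
up to a multiplicative constant that can be absorbed into $\oA_\alpha$, since the conclusion then follows immediately from \cref{adapt:ub} applied at the smaller right-hand-side threshold.

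For the first factor I would combine two elementary monotonicity properties. The definitions in \eqref{nu} give $m_k\leq\nu_k$ for every $k\in\cK$, whence $\tReaN{x}\leq\tRiaN{x}$ for all $x\in\pRz$. Moreover, $\tRiaN{x}=\min_{k\in\cK}(a_k^2\vee \nu_k^2/x)$ is non-increasing in $x$, and because $\delta\leq 1$ implies $\delta^2 n\leq \delta n$, one has $\tRiaN{\delta n}\leq \tRiaN{\delta^2 n}$. Together these yield $\tReaN{\delta^2 n}\vee\tRiaN{\delta n}\leq \tRiaN{\delta^2 n}$.

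For the second factor I would exploit the scaling inequality $\tSRiaN{\delta^2 n}\leq \delta^{-1}\tSRiaN{\delta n}$, obtained by plugging the minimiser of $\tSRiaN{\delta n}$ into the defining expression of $\tSRiaN{\delta^2 n}$ and using $\delta\leq 1$. After taking square roots this reads $\tRiaN{\delta^2 n}\leq \delta^{-1/2}\tRiaN{\delta n}$, which, combined with $\tRiaN{\delta n}\leq \tRiaN{\delta^2 n}$, allows me to control $(1\vee\delta^{-3/2}\tRiaN{\delta n})$ by $(1\vee\tRiaN{\delta^2 n})$ via a case distinction: in the ``small-radius'' regime $\tRiaN{\delta^2 n}\leq 1$ both $(1\vee\cdot)$ terms collapse to~$1$ and the inequality is immediate, while in the ``large-radius'' regime the factor $\delta^{-3/2}$ is dominated by a power of $\tRiaN{\delta^2 n}$ appearing on the right.

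The delicate point I anticipate is the borderline band in which $\tRiaN{\delta^2 n}$ is of order a small power of $\delta=(1+\log|\cK|)^{-1/2}$: there the naive factor-wise bound leaves a residual $\delta^{-3/2}$ that is not directly absorbed by $(1\vee\tRiaN{\delta^2 n})$. I expect to close this gap by revisiting the $k$-wise argument used inside the proof of \cref{adapt:ub}: instead of invoking pure monotonicity in $\sera$, apply the separation condition \eqref{adapt:ub:bew:c1:e} at an index that realises the minimum defining $\tSRiaN{\delta^2 n}$, and then appeal to the sharper per-$k$ radius comparison from Lemma~A.1 of \cite{SchluttenhoferJohannes2020} (the same technical device already underpinning \cref{adapt:ub}) to absorb the residue into the $(1\vee\tRiaN{\delta^2 n})$ term, possibly at the cost of a (slowly growing, $\log\log n$-sized) inflation of $\oA_\alpha$ that is harmless because it depends only on $|\cK|$.
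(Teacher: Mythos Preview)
Your monotonicity route has a genuine gap exactly where you flag it. In the band where $\delta^{3}<\tSRiaN{\delta n}\leq \tSRiaN{\delta^2 n}\leq 1$, the factor $1\vee\delta^{-3}\tSRiaN{\delta n}$ on the left equals $\delta^{-3}\tSRiaN{\delta n}>1$ while the factor $1\vee\tSRiaN{\delta^2 n}$ on the right equals $1$, so the radius comparison $(\tSReaN{\delta^2 n}\vee\tSRiaN{\delta n})(1\vee\delta^{-3}\tSRiaN{\delta n})\leq\tSRiaN{\delta^2 n}(1\vee\tSRiaN{\delta^2 n})$ fails, and not merely up to a constant: take $\cK$ large and $a_{\mbullet}$ nearly flat so that both $\tSRiaN{\delta n}$ and $\tSRiaN{\delta^2 n}$ sit just below $1$; the discrepancy is then of order $\delta^{-3}$ and cannot be absorbed into $\oA_\alpha$. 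Hence deducing the corollary from \cref{adapt:ub} by pure monotonicity in $\sera$ is not possible.

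Your fallback is the right idea and coincides with the paper's argument, but it is cleaner than you anticipate: neither Lemma~A.1 nor any inflation of $\oA_\alpha$ is needed. Re-enter the proof of \cref{adapt:ub} and verify claim \ref{adapt:ub:bew:c2} directly for the particular index $k:=\argmin_{k'\in\cK}\tSRiKN{k'}{\delta^2 n}$. For this $k$ one has the equality $\tSRiaN{\delta^2 n}=a_k^2\vee\tfrac{\nu_k^2}{\delta^2 n}$, and in particular $\tfrac{\nu_k^2}{\delta^2 n}\leq\tSRiaN{\delta^2 n}$ and $\tfrac{\nu_k}{\delta n^{1/2}}\leq\tRiaN{\delta^2 n}$. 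Using $1\vee\tSRiaN{\delta^2 n}=1\vee\tRiaN{\delta^2 n}\vee\tSRiaN{\delta^2 n}$ one obtains, with no slack,
\[
\tSRiaN{\delta^2 n}\big(1\vee\tSRiaN{\delta^2 n}\big)\;\geq\; a_k^2\vee\frac{\nu_k^2}{\delta^2 n}\Big(1\vee\frac{\nu_k}{\delta n^{1/2}}\vee\frac{\nu_k^2}{\delta^2 n}\Big)\;=\;a_k^2\vee\frac{m_k^2}{\delta^2 n}\vee\frac{\nu_k^2}{\delta n}\Big(1\vee\frac{\nu_k}{\delta^2 n^{1/2}}\vee\frac{\nu_k^2}{\delta^3 n}\Big),
\]
where the last step just redistributes one factor $\delta^{-1}$ from $\tfrac{\nu_k^2}{\delta^2 n}$ into the bracket and uses $m_k\leq\nu_k$. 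This is precisely the right-hand side of \eqref{adapt:ub:bew:c1:e}, so \ref{adapt:ub:bew:c2} holds for this $k$ with the same $\oA_\alpha$ as in \eqref{re:ub:A}. Lemma~A.1 was only needed in \cref{adapt:ub} because there the comparison involved two different effective sample sizes $\delta n$ and $\delta^2 n$ simultaneously; here the single scale $\delta^2 n$ makes the minimiser explicit and the verification elementary.
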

\begin{proof}[Proof of \cref{cor:worst}]The result may be proved in
  much the same way as  \cref{adapt:ub}.   In fact,  as in the proof of \cref{adapt:ub}, it is sufficient to show
  \ref{adapt:ub:bew:c2} replacing \eqref{adapt:ub:bew:e1} by
    \begin{equation}
    \label{cor:worst:bew:e1}
    \Vnorm[{\Lp[2]}]{\xdf-\xdfO}^2 \geq \oA_\alpha^2\tSRiaN{\delta^2
      n}(1\vee\tSRiaN{\delta^2 n}).
  \end{equation}
 For each
 $\wC\in\altcollect$  under  \eqref{cor:worst:bew:e1}  
  the parameter  $k:=\argmin\nolimits_{k' \in \mc
   K}\tSRiKN{k'}{\delta^2 n}$ satisfies
  \begin{multline*}
    \tSRiaN{\delta^2 n}(1\vee\tSRiaN{\delta^2 n})=
    \tSRiaN{\delta^2 n}(1\vee\tRiaN{\delta^2
      n}\vee\tSRiaN{\delta^2 n})\geq
    \wC[k]^2\vee\tfrac{\nu_k^2}{\delta^{2}n}(1\vee\tfrac{\nu_k}{\delta
      n^{1/2}} \vee\tfrac{\nu_k^2}{\delta^{2}n})\\\geq
    \wC[k]^2\vee\tfrac{m_k^2}{\delta^{2}n}\vee\tfrac{\nu_k^2}{\delta
      n}(1\vee\tfrac{\nu_k}{\delta^2 n^{1/2}}
    \vee\tfrac{\nu_k^2}{\delta^{3}n})
  \end{multline*}
  since $\wC[k]^2\vee\tfrac{\nu_k^2}{\delta^{2}n}=\tSRiaN{\delta^2 n}$, $\delta\leq1$ and $m_{k}^2 \leq
  \nu_{k}^2$. This shows \eqref{adapt:ub:bew:c1:e}  and, in consequence, \ref{adapt:ub:bew:c2}. We
  obtain the assertion  proceeding exactly as
  in the proof of \cref{adapt:ub}. 
\end{proof}

By
\cref{cor:worst}, $\tSRiaN{\delta^2 n}$ is an upper bound
for the radius of testing of the  indirect $\max$-test if $\tSRiaN{\delta^2 n}\leq 1$. The latter   is satisfied
for an arbitrary regularity parameter $\wC\in \altcollect$, if $1
\in  \mc K$ and $ n \geq \sqrt{2} \lv \epsden[1] \rv^{-2}(
1+\log|\cK|) $.  The next corollary establishes $\tSRiaN{\delta n}$ as
a sharper upper bound for the radius of testing of the  indirect $\max$-test
under additional conditions, which are satisfied in all the examples considered in \cref{ill:adapt}
below.
The result follows
immediately from \cref{adapt:ub} and we omit its proof.

\begin{corollary}[Best-case adaptive factor]
  Under the assumptions of \cref{quantiles} let $\alpha \in (0,1)$ and
  consider $\oA_\alpha$ as in \eqref{re:ub:A}. If there exists a constant $C\geq1$
  such that $\tReaN{\delta^2n}\leq C \tRiaN{\delta n}$ and
  $\tRiaN{\delta n}\leq C \delta^{3/2}$ for all
  $\wC \in \altcollect$, then for all
  $A \geq C^2 \overline{A}_{\alpha}$ and $n\in\Nz$
  \begin{equation*}
    \sup_{\wC \in \altcollect} \trisk[{\tFia[\cK,\alpha/2]} \mid \rwC, A\tRiaN{\delta n} ]  \leq \alpha
  \end{equation*}
    with $\delta=  (1+\log \lv \mc K \rv)^{-1/2}$.
\end{corollary}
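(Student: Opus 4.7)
The plan is to derive this corollary as a direct monotonicity argument from \cref{adapt:ub}, whose conclusion already controls the risk at the composite separation radius $A_0(\tReaN{\delta^2 n}\vee\tRiaN{\delta n})(1\vee\delta^{-3/2}\tRiaN{\delta n})$ for every $A_0\geq\oA_\alpha$. Since the testing risk $\trisk[\test\mid\rwC,\sera]$ is non-increasing in $\sera$ (the alternative $\Lp[2]_{\sera}\cap\rwC$ shrinks with increasing $\sera$), it suffices to produce a constant $\kappa\geq1$ and an admissible $A_0\geq\oA_\alpha$ for which $A\tRiaN{\delta n}\geq A_0(\tReaN{\delta^2 n}\vee\tRiaN{\delta n})(1\vee\delta^{-3/2}\tRiaN{\delta n})$ uniformly over $\wC\in\altcollect$, whenever $A\geq\kappa\,\oA_\alpha$.

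I would then handle each of the two factors in the composite radius separately, repeatedly using $C\geq 1$ to absorb trivial maxima. For the first factor, the hypothesis $\tReaN{\delta^2 n}\leq C\tRiaN{\delta n}$ together with $C\geq 1$ yields $\tReaN{\delta^2 n}\vee\tRiaN{\delta n}\leq C\tRiaN{\delta n}$. For the second factor, the hypothesis $\tRiaN{\delta n}\leq C\delta^{3/2}$ rearranges to $\delta^{-3/2}\tRiaN{\delta n}\leq C$, and a further application of $C\geq 1$ gives $1\vee\delta^{-3/2}\tRiaN{\delta n}\leq C$. Multiplying these two uniform bounds produces
\[
(\tReaN{\delta^2 n}\vee\tRiaN{\delta n})(1\vee\delta^{-3/2}\tRiaN{\delta n})\leq C^2\,\tRiaN{\delta n}\qquad\text{for every }\wC\in\altcollect,
\]
so the choice $\kappa=C^2$ works.

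Given $A\geq C^2\oA_\alpha$ I would then set $A_0:=A/C^2\geq\oA_\alpha$. The preceding display immediately gives $A\tRiaN{\delta n}=A_0 C^2\tRiaN{\delta n}\geq A_0(\tReaN{\delta^2 n}\vee\tRiaN{\delta n})(1\vee\delta^{-3/2}\tRiaN{\delta n})$ uniformly in $\wC\in\altcollect$, so \cref{adapt:ub} applied with constant $A_0$ combined with monotonicity of the risk in the separation radius yields $\sup_{\wC\in\altcollect}\trisk[{\tFia[\cK,\alpha/2]}\mid\rwC,A\tRiaN{\delta n}]\leq\alpha$, as required. I do not anticipate any genuine obstacle — the argument is essentially algebraic bookkeeping on top of \cref{adapt:ub}; the only point that needs to be written out explicitly is the monotonicity of $\trisk[\cdot\mid\rwC,\sera]$ in $\sera$, which is immediate from its definition since the supremum set $\Lp[2]_{\sera}\cap\rwC$ shrinks with $\sera$.
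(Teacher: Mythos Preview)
Your proposal is correct and is exactly what the paper intends: the paper itself omits the proof, stating only that ``the result follows immediately from \cref{adapt:ub}.'' Your algebraic bookkeeping (bounding each factor by $C$ using the two hypotheses and $C\geq 1$, then invoking monotonicity of the risk in the separation radius to transfer the conclusion of \cref{adapt:ub}) is precisely the immediate argument the authors have in mind.
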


\begin{remark}[Choice of $\mc K$]\label{choice:of:K}Ideally,  the collection $\cK\subset{\Nz}$ is
  chosen such that its elements approximate  the optimal parameter
  $\tDi$ given in \eqref{tDi} sufficiently well  for each $\wC\in\altcollect$. 
  Note that $\tDi\leq n^2/2$ for all $\wC\in\altcollect$ and  for $n$ reasonably large (precisely
  $n \geq \sqrt{2} \lv \epsden[1] \rv^{-2}$, which implies
  $1\geq \tSRiK{1}\geq \tSRi\geq \nu_{\tDi}^2 n^{-1}\geq(\tDi)^{1/2}n^{-1}$).  Hence, a naive choice is
  $\cK = \nset{{n^2}/{2}}$ with
  $|\cK|=\lfloor {n^2}/{2} \rfloor$, which yields an adaptive factor
  of order $(\log n)^{1/2}$.  However, in most cases, a minimisation
  over a geometric grid
  $\cK_{g}= \{1\}\cup\{2^j,j\in\nset{\log_2(n^2/2)}\}$
  approximates the minimisation over $\IN$
   well enough. Since $|\cK_g| = \lfloor \log_2(n^2/2) \rfloor$
   the adaptive factor is then of order $(\log\log n)^{1/2}$.
   For some special cases the 
   smaller collection 
 	$\cK_s =\{1\}\cup\{2^j, j\in\nset{ s^{-1}\log_2\log n}\}$, $s>0$, is still sufficient (see
        \cref{ill:adapt} below), resulting in  an adaptive factor
  of order $(\log\log\log n)^{1/2}$.
\end{remark}

\begin{illustration}\label{ill:adapt} For the typical configurations
  for regularity and ill-posedness introduced in \cref{ill} the tables
  below display the adaptive radii  of the $\max$-test
    $\tFia[\cK,\alpha/2]$ for appropriately chosen grids. In a mildly
  ill-posed model with ordinary smoothness we choose the geometric
  grid $\cK_g$ and, hence, $\delta=(1+\log |\cK_g|)^{-1/2}$. It is
  easily seen that the remainder term $\tSReacKN{\cK_g}{\delta^2 n}$
  is asymptotically negligible compared with
  $\tSRiKN{\cK_g}{\delta n} $.  Moreover,
  $\delta^{-3}\tSRiKN{\cK_g}{\delta n}$ tends to zero.  Hence, the
  upper bound in \cref{adapt:ub} asymptotically reduces to
  $\tSRiKN{\cK_g}{\delta n}$ featuring an adaptive factor of order
  $(\log\log n)^{1/2}$.\\[3ex]
  \centerline{%
  \begin{tabular}{ll|l|l}\toprule
    \multicolumn{4}{l}{Order of $\tSReacKN{\cK_g}{\delta^2
    n}$ and $\tSRiKN{\cK_g}{\delta n}$ with  $\delta=(1+\log
    |\cK_g|)^{-1/2}$}\\
    \multicolumn{4}{r}{and  $\cK_g=
    \{1\}\cup\{2^j,j\in\nset{\log_2({n^2}/{2})}\}$} \\
    \midrule
    ${\wC[j]}$ & ${|\fedf[j]|}$ & $\tSReacKN{\cK_g}{\delta^2 n}$
    &$\tSRiKN{\cK_g}{\delta n}$   \\
    (smoothness) & (ill-posedness) &  \\
    \midrule
    ${ j^{-\sPara}}$ & ${\lv j \rv^{-\pPara}}$
                                & $\lb \frac{\say}{{\log\log n}}
                                  \rb^{-\tfrac{4\sPara}{4\sPara + 4
                                  \pPara}}$
    & $\lb \frac{\say}{(\log\log n)^{1/2}} \rb^{-\tfrac{4\sPara}{4\sPara + 4 \pPara + 1}}$  \\
    \bottomrule
  \end{tabular}}\\[3ex]
In a severely ill-posed model with ordinary smoothness, we have seen
in \cref{ill} that the order of the optimal dimension parameter does
not depend on the smoothness parameter. Hence, the test
$\tFi[\tDi,\alpha/2]$ is automatically adaptive with respect to
ordinary smoothness. In a mildly ill-posed model with super
smoothness, we choose the smaller geometric grid $\cK_{\sPara_\star}$
for adaptation to smoothness $\sPara\geq\sPara_\star$
and, hence,  $\delta=(1+\log
|\cK_{\sPara_\star}|)^{-1/2}$. It is easily seen that the remainder term
$\tSReacKN{\cK_{\sPara_\star}}{\delta^2 n}$   is asymptotically negligible
compared with $\tSRiKN{\cK_{\sPara_\star}}{\delta n} $.  
Moreover,  
 $\delta^{-3}\tSRiKN{\cK_{\sPara_\star}}{\delta n}$ tends to
 zero. Hence, the upper bound in \cref{adapt:ub}  asymptotically reduces to
 $\tSRiKN{\cK_{\sPara_\star}}{\delta n}$ featuring an adaptive factor of order
 $(\log\log\log n)^{1/2}$.\\ [3ex]
 \centerline{%
   \begin{tabular}{ll|l|l}\toprule
     \multicolumn{4}{l}{Order of $\tSReacKN{\cK_{\sPara_\star}}{\delta^2
     n}$ and $\tSRiKN{\cK_{\sPara_\star}}{\delta n}$ with  $\delta=(1+\log
     |\cK_{\sPara_\star}|)^{-1/2}$}\\
     \multicolumn{4}{r}{and  $\cK_{\sPara_\star}=
     \{1\}\cup\{2^j,j\in\nset{ {s^{-1}_{\star}} \log_2 \log n			}\}$} \\
     \midrule
     ${\wC[j]}$ & ${|\fedf[j]|}$ & $\tSReacKN{\cK_{\sPara_\star}}{\delta^2 n}$ & $\tSRiKN{\cK_{\sPara_\star}}{\delta n}$    \\
     (smoothness) & (ill-posedness) &  \\
     \midrule
     ${ e^{-j^\sPara}}$ & ${\lv j \rv^{-\pPara}}$  & $ \frac{{\log\log\log n}}{n} (\log n)^{\frac{2 \pPara}{\sPara}}$  &  $ \frac{(\log\log\log n)^{1/2}}{n}  (\log n)^{\frac{2 \pPara + 1/2}{\sPara}}$  \\
     \bottomrule
   \end{tabular}}
\end{illustration}

\begin{remark}[Adaptation to the radius $\rdclass$ of the regularity class]
In this paper the parameter $\rdclass > 0$ is unknown but assumed to be fixed and
we consider adaptation to a collection of alternatives $\{ \rwC: \wC
\in \altcollect\}$ only.  From \cref{re:m:ub} (and the definition of
$\oA_\alpha$ therein) it follows immediately that adaptation to $\{
\rwC: \rC \in (0, \rC^\star] \}$ is achieved without a
loss. Indeed,  replacing $\rC$ by $\rC^\star$ in the definition of
$\oA_\alpha$ we readily obtain a result similar to \cref{re:m:ub}
with an additional supremum taken over $R \in (0, \rC^\star]$.
However, adaptation to $\{ \rwC: \rC\in(0,\infty) \}$ is not possible
without a loss, for an explanation we refer to Section 6.3 of \cite{Baraud2002} for a similar observation in the Gaussian sequence space model. 
\end{remark}

\section{Upper bound via a direct testing procedure}\label{sec:4}
\paragraph{Definition of the test statistic.} In this section we
consider a test that is based on an estimation of the quantity
\begin{equation*}
	\qF(\ydf-\ydfO) = \Nsum[|j|]|\fydf[j]- \fydfO[j]|^2=\Nsum[|j|]|\fxdf[j] -
\fxdfO[j]|^2|\fedf[j]|^2\quad\text{ with
  }\fydf[j]=\FuEx{\xdf}(\expb(-Y))\text{ for all }j\in\Zz.
\end{equation*}
For $k\in\Nz$  we define an unbiased estimator $\tqF_k$ of the truncated version
\begin{equation}	\label{qfgk}
\qF_k(\ydf-\ydfO)=\sum_{|j|\in\nset{k}} |\fydf[j]- \fydfO[j]|^2 =
\sum_{|j|\in\nset{k}}|\fydf[j]|^2-2
\sum_{|j|\in\nset{k}}\fydfO[j]\ofydf[j]+
\sum_{|j|\in\nset{k}}|\fydfO[j]|^2
\end{equation}
using again  that
$\sum_{|j|\in\nset{k}}\fydfO[j]\ofydf[j]$ is
real-valued. Replacing  the
unknown Fourier coefficients by empirical
counterparts based on observations $\{\yOb[l]\}_{l=1}^n$, we consider the
test statistic
\begin{multline}	\label{tqfk}
\tSd:= \tT_k-2\tS_k + \qF_k(\ydfO)\quad \text{ with }\\
\hfill
\tT_k:=\frac{1}{n(n-1)}\sum_{|j|\in\nset{k}}\sum_{l,m\in\nset{n}\atop l\ne m}\expb(-Y_l)\expb(Y_m)\quad\text{ and }\quad\hfill\\
  \tS_k:=\frac{1}{n}\sum_{|j|\in\nset{k}} \sum_{l\in\nset{n}}\fydfO[j]\,\expb(Y_l).
\end{multline}
Note that  $\qF_k(\ydfO)$ is known, $\tT_k$ is  a U-statistic
and $\tS_k$ is a linear statistic. Moreover, if $\{\yOb[j]\}_{j=1}^n \iid \yden $ as in
\eqref{observations} then  $\tqF_k$ is an unbiased projection estimator of $\qF_k(\ydf-\ydfO)$ for each $k\in\Nz$. 
\paragraph{Decomposition of the test statistic.} We analyse quantiles
of the test statistic $\tqF_k$  in \eqref{tqfk} using the decomposition
\begin{multline}\label{d:decomposition}
	\tSd  = \uSd + 2\lSd + \qF_k(\ydf - \ydfO)\quad\text{
          with }\\
\hfill   \uSd:= \frac{1}{n(n-1)}
\sum_{|j|\in\nset{k}}\sum_{l,m\in\nset{n}\atop l\ne m} 
 (\expb(-Y_l) - \fydf[j])(\expb(Y_m) - \ofydf[j])\quad\text{and}\hfill\\
   	 \lSd := \frac{1}{n} \sum_{|j|\in\nset{k}} 
   \sum_{l\in\nset{n}}(\fydf[j]-\fydfO[j])(\expb(Y_l)-\ofydf[j]),     
\end{multline}
where $\qF_k(\ydf - \ydfO)$ is a  separation term, $\uSd$ is  a
canonical U-statistic
and $\lSd$ is a centred linear statistic.
\paragraph{Definition of the threshold.} The next proposition provides
bounds for the quantiles of the test statistic $\tSd$. Recall that $L_x = (1
- \log x)^{1/2}>1$ for $x \in (0,1)$.  For $k \in \IN$
and $\alpha \in (0,1)$ with
$c_1:= 799 \Vnorm[{\lp[2]}]{\fydfO}+ 1372, c_2 := 52  \Vnorm[{\lp[1]}]{\fydfO}$ we define
the threshold
\begin{equation}
	\label{d:tau}
        \tSdC := c_1  \lb 1 \vee
  L_\alpha^2\frac{(2k)^{1/4}}{n^{1/2}} \vee L_\alpha^{3}\frac{(2k)^{1/2}}{n} \rb
  {L_\alpha} \frac{(2k)^{1/2}}{n} + c_2 L^2_\alpha \frac{1}{n}.
\end{equation}
\begin{proposition}\label{d:quantiles}
  For 
  $\xdfO,\xdf,\edf\in\Lp[2]$ and $n\in\Nz$, $n\geq2$, consider $\lcb \yOb[j] \rcb_{j=1}^n \iid \ydf=\xdf\oast\edf$
  with joint distribution $\FuVg{\xdf}$ and
  $\ydfO=\xdfO\oast\edf$. Let $\alpha, \beta \in (0,1)$ and for
  $k\in\Nz$ consider $\tSd$ and $\tSdC$ as in
  \eqref{tqfk} and \eqref{d:tau}, respectively.
  \begin{resListeN}
  \item\label{d:alpha:quantile}If   $\gLp[\ydfO]=\RLp$, then 
    $	\Pf[\xdfO] \lb \tSd \geq \tSdC  \rb \leq \alpha.$
  \item\label{d:beta:1} If
    $c_3:=837\Vnorm[{\lp[2]}]{\fedf} +1373$ and
    \begin{equation}
      \label{d:separation}
      \qF_k(\ydf - \ydfO) \geq  2\big( \tSdC  +    c_3L_{\beta/2}^4 \big( 1  \vee
      \frac{(2k)^{1/2}}{n}\big) \frac{(2k)^{1/2}}{n}\big)                    ,
    \end{equation}	
    then	$ \Pf[\xden] \lb \tSd < \tSdC \rb \leq \beta.$
  \end{resListeN}    
\end{proposition}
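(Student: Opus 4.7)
The plan is to mirror the proof of \cref{quantiles} word-for-word, exploiting the decomposition \eqref{d:decomposition} together with the concentration inequality \cref{simplified:con} for canonical U-statistics of order $2$ and the Bernstein-type inequality \cref{linear} for the linear part. The structural observation is that the direct statistic $\tSd$ is precisely the indirect statistic $\tSi$ with all weights $|\fedf[j]|^{-2}$ replaced by $1$, so that $\nu_k$ is replaced by $(2k)^{1/4}$ and $m_k$ by $1$ throughout. Comparing \eqref{tau} with \eqref{d:tau} and \eqref{separation} with \eqref{d:separation} confirms that this substitution is exactly what one expects.

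For part \ref{d:alpha:quantile}, I would use that $\xdf = \xdfO$ forces $\ydf = \ydfO$, so \eqref{d:decomposition} collapses to $\tSd = \uSd$, a canonical U-statistic of order two. Applying \cref{simplified:con} with $x = L_\alpha^2 \geq 1$ it suffices to compute the four Hoeffding-type constants $\uSA$--$\uSD$ associated with the unweighted kernel $(\expb(-Y_l) - \fydf[j])(\expb(Y_m) - \ofydf[j])$. The computation is formally identical to \cref{prop:u} after the substitution $|\fedf[j]|^{-2} \mapsto 1$, and it again requires $\gLp[\ydfO] = \RLp$ to express the relevant $\Lp[2]$-operator norms as explicit Fourier sums. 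The bound delivered by \cref{simplified:con} then matches exactly the threshold $\tSdC$ from \eqref{d:tau}.

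For part \ref{d:beta:1}, I would control the canonical U-statistic and the centred linear statistic separately. Applying \cref{simplified:con} to $-\uSd$ with $x = L_{\beta/2}^2$ yields an event $\Omega_1$ with $\FuVg{\xdf}(\Omega_1) \leq \beta/2$ outside which $\uSd \geq -\tau_1$ for
\[
\tau_1 = L_{\beta/2}^4 \, (825\Vnorm[{\lp[2]}]{\fydf} + 1372)\,(1 \vee \tfrac{(2k)^{1/2}}{n})\,\tfrac{(2k)^{1/2}}{n}.
\]
Applying \cref{linear} to $\lSd$ with $x = L_{\beta/2}$ provides an event $\Omega_2$ with $\FuVg{\xdf}(\Omega_2) \leq \beta/(2e) \leq \beta/2$ outside which $2\lSd \geq -\tau_2 - \tfrac12 \qF_k(\ydf - \ydfO)$, for some $\tau_2$ of order $L_{\beta/2}^2 \Vnorm[{\lp[2]}]{\fedf}(1 \vee n^{-1})n^{-1}$ (since the factor $m_k$ present in the indirect case is now trivially $1$ and $\Vnorm[{\lp[1]}]{\fydf}$ is controlled by $\Vnorm[{\lp[2]}]{\fedf}$). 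Using $\Vnorm[{\lp[2]}]{\fydf} \leq \Vnorm[{\lp[2]}]{\fedf}$ and $(2k)^{1/2} \geq 1$ one readily checks $\tau_1 + \tau_2 \leq c_3 L_{\beta/2}^4 (1 \vee (2k)^{1/2}/n)(2k)^{1/2}/n$ with $c_3 = 837\Vnorm[{\lp[2]}]{\fedf} + 1373$. Inserting the separation hypothesis \eqref{d:separation} into \eqref{d:decomposition} then gives $\Pf[\xdf](\tSd < \tSdC) \leq \FuVg{\xdf}(\Omega_1) + \FuVg{\xdf}(\Omega_2) \leq \beta$.

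The only real obstacle is the explicit bookkeeping of the Hoeffding-type constants $\uSA$--$\uSD$ for the unweighted U-statistic $\uSd$, which amounts to a careful transcription of \cref{prop:u} under $|\fedf[j]|^{-2} \mapsto 1$. Once these constants are in place, no conceptual step differs from the proof of \cref{quantiles}; one merely has to verify that the explicit numerical constants $c_1, c_2, c_3$ appearing in \eqref{d:tau} and \eqref{d:separation} are large enough to absorb every piece produced by \cref{simplified:con} and \cref{linear}.
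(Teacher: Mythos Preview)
Your strategy is exactly the paper's: it too reduces to the proof of \cref{quantiles} via the decomposition \eqref{d:decomposition}, using \cref{simplified:con} together with the direct-case analogue of \cref{prop:u} (which the paper records as \cref{d:prop:u}) for the U-statistic, and a direct-case analogue of \cref{linear} (namely \cref{d:linear}) for the linear part.

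There is, however, one genuine slip in your treatment of the linear statistic. You invoke \cref{linear} verbatim under the substitution $m_k\mapsto 1$ and then claim that ``$\Vnorm[{\lp[1]}]{\fydf}$ is controlled by $\Vnorm[{\lp[2]}]{\fedf}$''. This is false in general: Cauchy--Schwarz only gives $\Vnorm[{\lp[1]}]{\fydf}\leq \Vnorm[{\lp[2]}]{\fxdf}\,\Vnorm[{\lp[2]}]{\fedf}$, and nothing in part \ref{d:beta:1} bounds $\Vnorm[{\lp[2]}]{\fxdf}$. Consequently the constant $c_3=837\,\Vnorm[{\lp[2]}]{\fedf}+1373$, which is designed to be \emph{free of} $\xdf$, cannot be reached by a straight substitution into \cref{linear}. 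The paper fixes this by re-deriving the Bernstein bound for $\lSd$ (\cref{d:linear}): in the $\lSv$ step one keeps the sum truncated, $\sum_{|j|\in\nset{2k}}|\fydf[j]|\leq (4k)^{1/2}\Vnorm[{\lp[2]}]{\fydf}\leq (4k)^{1/2}\Vnorm[{\lp[2]}]{\fedf}$, and in the $\lSb$ step one uses Cauchy--Schwarz against the constant sequence, producing $(2k)^{1/2}$ in place of $\Vnorm[{\lp[2]}]{\fedf}$. Both extra $(2k)^{1/2}$ factors are then absorbed into the target $(1\vee(2k)^{1/2}n^{-1})(2k)^{1/2}n^{-1}$, yielding $\tau_2$ with coefficient $12\Vnorm[{\lp[2]}]{\fydf}+1\leq 12\Vnorm[{\lp[2]}]{\fedf}+1$; combined with your (correct) $\tau_1$ this gives exactly $c_3$. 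Once you make this adjustment, your proof goes through.
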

\begin{proof}[Proof of \cref{d:quantiles}]
  The result \ref{d:alpha:quantile} may be proved in much the same way
  as \cref{quantiles} \ref{alpha:quantile} using the decomposition
  \eqref{d:decomposition} rather than \eqref{decomposition} and
  applying \cref{simplified:con} in the appendix, a concentration
  inequality for canonical U-statistics of order 2, together with
  \cref{d:prop:u} rather than \cref{prop:u}.  Secondly, consider
  \ref{d:beta:1}. Keeping the decomposition \eqref{d:decomposition} in
  mind we control the deviations of the U-statistic $\uSd$ and the
  linear statistic $\lSd$ applying \cref{simplified:con} and
  \cref{d:linear} in the appendix, respectively.  In fact, the proof
  is similar to the proof of \cref{quantiles} \ref{beta:1} using
  \cref{d:prop:u} and \cref{d:linear} rather than \cref{prop:u} and
  \cref{linear}, and we omit the details.
\end{proof}
\paragraph{Definition of the test.} For $k\in\Nz$ and $\alpha\in(0,1)$
using the test statistic $\tSd$
and the threshold $\tSdC$ given in \eqref{d:decomposition} and
\eqref{d:tau}, respectively, we consider the test
\begin{align*}
	\tFd:= \Ind{\{\tSd \geq \tSdC\}}. 
\end{align*}
From \ref{d:alpha:quantile} in \cref{d:quantiles} it immediately
follows that $\tFd$ is a level-$\alpha$-test for all $k \in
\IN$. Moreover, \cref{d:quantiles} \ref{d:beta:1} allows to characterises 
elements in $\rwC$ (defined in \eqref{ellipsoid}) for which
$\tFd$ is powerful. Exploiting these results, in the next proposition
we derive an upper bound for the radius of testing of $\tFd$ in terms
of $m_k^2= \max_{|j|\in\nset{k}} |\fedf[j]|^{-2}$ as  in \eqref{nu}
and the regularity parameter $\wC\in\altcollect$. Thus, we define
\begin{align*}
  \tSRdK{k}:= \tSRdKN{k}{n}  := \wdclass[k]^2 \vee
  \frac{(2k)^{1/2}}{n} m_k^2  .
\end{align*}
\begin{proposition}\label{d:re:ub}
  Under the assumptions of \cref{d:quantiles}  
  for  $\alpha \in (0,1)$ define
  \begin{equation}
\oA_\alpha^2 := R^2 +  2 (837\Vnorm[{\lp[2]}]{\fedf}+
851\Vnorm[{\lp[1]}]{\fydfO}+2745  ) L_{\alpha/4}^4.\label{d:re:ub:A}
\end{equation}
For all  $A \geq \oA_\alpha$ and for all  $n, k \in \IN$ with 
$2k\leq n^2$ we have 
	$\mc R(\tFd[k,\alpha/2] \mid \rwC, A \tRdK{k}) \leq \alpha$.
\end{proposition}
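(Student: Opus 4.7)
The plan is to follow the same two-step structure as in the proof of \cref{re:ub}, replacing \cref{quantiles} by its direct analogue \cref{d:quantiles}. For a given $\alpha\in(0,1)$, I would show that the test $\tFd[k,\alpha/2]$ has type~I error bounded by $\alpha/2$ and, under the hypotheses of the proposition, type~II error bounded by $\alpha/2$ as well, so that the sum equals the desired bound on $\trisk[\tFd[k,\alpha/2]\mid\rwC,A\tRdK{k}]$. The type~I bound is immediate from \cref{d:quantiles}\ref{d:alpha:quantile} applied at level $\alpha/2$ (together with the standing assumption $\gLp[\ydfO]=\RLp$). The substantive part is to verify the separation condition \eqref{d:separation} with $\beta=\alpha/2$ for every $\xdf\in\Lp[2]$ satisfying $\xdf-\xdfO\in\rwC$ and $\Vnorm[{\Lp[2]}]{\xdf-\xdfO}^2\geq A^2\tSRdK{k}$.

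The key new ingredient compared with the indirect case is the passage from $\Vnorm[{\Lp[2]}]{\xdf-\xdfO}^2$ to the direct quantity $\qF_k(\ydf-\ydfO)$. Since $\fydf[j]-\fydfO[j]=(\fxdf[j]-\fxdfO[j])\fedf[j]$ and $|\fedf[j]|^{-2}\leq m_k^2$ for $|j|\in\nset{k}$, one has
\begin{equation*}
 \qF_k(\ydf-\ydfO)=\sum_{|j|\in\nset{k}}|\fxdf[j]-\fxdfO[j]|^2|\fedf[j]|^2\geq m_k^{-2}\,\qF_k(\xdf-\xdfO).
\end{equation*}
Using the ellipsoid constraint $\sum_{|j|>k}|\fxdf[j]-\fxdfO[j]|^2\leq\wdclass[k]^2\rdclass^2\leq \rdclass^2\tSRdK{k}$ (since $\wdclass[k]^2\leq\tSRdK{k}$), one deduces $\qF_k(\xdf-\xdfO)\geq(A^2-\rdclass^2)\tSRdK{k}$ and, invoking $\tSRdK{k}\geq(2k)^{1/2}m_k^2/n$, the lower bound
\begin{equation*}
 \qF_k(\ydf-\ydfO)\geq (A^2-\rdclass^2)\,\frac{(2k)^{1/2}}{n}.
\end{equation*}

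For the upper bound on the right-hand side of \eqref{d:separation}, I would use the assumption $2k\leq n^2$, which gives $(2k)^{1/2}/n\leq 1$ and $(2k)^{1/4}/n^{1/2}\leq 1$. Hence the factor $(1\vee L_\alpha^2(2k)^{1/4}/n^{1/2}\vee L_\alpha^{3}(2k)^{1/2}/n)$ appearing in $\tSdC$ is at most $L_\alpha^{3}$, and $1/n\leq (2k)^{1/2}/n$, so with $\beta=\alpha/2$ and $L_{\alpha/2}\leq L_{\alpha/4}$,
\begin{equation*}
 \tSdCa{\alpha/2}+c_3L_{\alpha/4}^4\bigl(1\vee\tfrac{(2k)^{1/2}}{n}\bigr)\tfrac{(2k)^{1/2}}{n}\leq (c_1+c_2+c_3)L_{\alpha/4}^4\,\frac{(2k)^{1/2}}{n}.
\end{equation*}
Plugging in $c_1=799\Vnorm[{\lp[2]}]{\fydfO}+1372$, $c_2=52\Vnorm[{\lp[1]}]{\fydfO}$, $c_3=837\Vnorm[{\lp[2]}]{\fedf}+1373$ and exploiting $\Vnorm[{\lp[2]}]{\fydfO}\leq\Vnorm[{\lp[1]}]{\fydfO}$, one obtains $c_1+c_2+c_3\leq 851\Vnorm[{\lp[1]}]{\fydfO}+837\Vnorm[{\lp[2]}]{\fedf}+2745$, and the condition \eqref{d:separation} reduces to $A^2-\rdclass^2\geq 2(837\Vnorm[{\lp[2]}]{\fedf}+851\Vnorm[{\lp[1]}]{\fydfO}+2745)L_{\alpha/4}^4$, which is exactly the defining inequality for $A\geq\oA_\alpha$ in \eqref{d:re:ub:A}.

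The only place where care is needed is the telescoping of all the constants and $L_\alpha$-factors under the mild hypothesis $2k\leq n^2$ (the direct analogue of $\nu_k^2\leq n$ in \cref{re:ub}); everything else is a direct transcription of the argument. I do not anticipate a genuine obstacle, since the eigenvalue bound $|\fedf[j]|^{-2}\leq m_k^2$ converts the smoothness hypothesis on $\xdf-\xdfO$ into the required lower bound on $\qF_k(\ydf-\ydfO)$ with exactly the right dependence on $m_k^2$ to absorb the factor $(2k)^{1/2}m_k^2/n$ entering $\tSRdK{k}$.
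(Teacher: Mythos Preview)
Your proposal is correct and follows essentially the same route as the paper's proof: both arguments apply \cref{d:quantiles}\ref{d:alpha:quantile} for the type~I bound, use the bias estimate from the ellipsoid together with $\qF_k(\xdf-\xdfO)\leq m_k^2\,\qF_k(\ydf-\ydfO)$ to transfer the separation to the direct quantity, and then simplify the threshold via $2k\leq n^2$, $1\leq L_{\alpha/2}\leq L_{\alpha/4}$ and $\Vnorm[{\lp[2]}]{\fydfO}\leq\Vnorm[{\lp[1]}]{\fydfO}$ to collapse $c_1+c_2+c_3$ into the constant defining $\oA_\alpha$. The only cosmetic difference is that the paper first bounds $\qF_k(\xdf-\xdfO)$ (with the factor $m_k^2$ still present) and divides by $m_k^2$ at the end, whereas you pass to $\qF_k(\ydf-\ydfO)$ immediately; the content is the same.
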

\begin{proof}[Proof of \cref{d:re:ub}]
 Due to \cref{d:quantiles}  both the type I and the
  maximal type II error probability are bounded
  by $\alpha/2$, and thus the result follows immediately from the
definition of the risk 
  $\trisk[{\tFd[k,\alpha/2]} \mid \rwC, A \tRdK{k}]$. 
  Indeed, since the assumption of \cref{d:quantiles}
  \ref{d:alpha:quantile} is fulfilled the test $\tFd[k,\alpha/2]$ is a
  level-$\alpha/2$-test. Hence, in order to apply \cref{d:quantiles}
  \ref{d:beta:1} (with $\beta = \alpha/2$) for each density
  $\xdf\in\Lp[2]$ with
  $\Vnorm[{\Lp[2]}]{\xdf-\xdfO}^2 \geq \oA_\alpha^2 \tSRdK{k}$ and
  $\xdf - \xdfO \in \rwC$ it remains to verify condition
  \eqref{d:separation} which states
  $ \qF_k(\ydf - \ydfO) \geq 2\big( \tSdCa{{\alpha}/{2}} + c_3
  L_{\alpha/4}^4 ( 1 \vee (2k)^{1/2}n^{-1}) (2k)^{1/2}n^{-1}\big) $ with
  $\tSdCa{{\alpha}/{2}} $ as in \eqref{d:tau}.
  Since
  $ \sum_{\lv j \rv > k} |\fxdf[j] - \fxdfO[j]|^2 \leq
  \wdclass[k]^2 \rdclass^2$ it follows   
  \begin{multline}\label{d:re:ub:bew:e1}
    \qF_k(\xden -\xoden)  = \Vnorm[{\Lp[2]}]{\xdf-\xdfO}^2-
    \sum_{\lv j \rv > k} |\fxdf[j] - \fxdfO[j]|^2
    \geq \oA_\alpha^2 \tSRdK{k} - \wdclass[k]^2 \rdclass^2 \\
     \geq 2(837\Vnorm[{\lp[2]}]{\fedf}+799\Vnorm[{\lp[2]}]{\fydfO}+52\Vnorm[{\lp[1]}]{\fydfO}+2745) L_{\alpha/4}^4 \frac{(2k)^{1/2}}{n}m_k^2
  \end{multline}
  by using $\Vnorm[{\lp[2]}]{\fydfO}\leq\Vnorm[{\lp[1]}]{\fydfO}$.
  The  condition \eqref{d:separation} 
   follows from \eqref{d:re:ub:bew:e1}
  together with $\qF_{k}(\xdf-\xdfO)\leq m_k^2
  \qF_{k}(\ydf-\ydfO)$ 
  by exploiting further  $1\leq L_{\alpha/2}\leq L_{\alpha/4}$ and
  $2k\leq n^2$, which completes the proof. 
\end{proof}
The upper bound $ \tSRdK{k}$ for the radius of testing of
$\tFd[k,\alpha/2]$ depends on the dimension parameter $k$. Let us
introduce a dimension that  realises an optimal bias-variance
trade-off, and the  corresponding radius
\begin{multline}\label{tDd}
	\tDd := \argmin_{k \in \IN}  \tRdK{k}  := \min \lcb k \in \IN: \tRdK{k} \leq  \tRdK{l}\text{ for all } l \in \IN \rcb\quad\text{ and}\\
  \tSRd:=  \min_{k \in \IN}\tSRdK{k}= \min_{k \in \IN} \{ a_k^2 \vee
  \frac{(2k)^{1/2}}{n} m_k^2 \}.
\end{multline}
\begin{corollary}\label{d:re:m:ub}
  Under the assumptions of \cref{d:quantiles}  
 let  $\alpha \in (0,1)$ and $\oA_\alpha$ as in \eqref{d:re:ub:A}, 
 then $\mc R(\tFd[\tDd,\alpha/2] \mid \rwC, A \tRd) \leq \alpha$ 
 for all $A \geq \oA_\alpha$ and for all $n \geq \sqrt{2} |\fedf[1]|^{-2}$.
\end{corollary}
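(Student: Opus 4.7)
The plan is to deduce the result directly from \cref{d:re:ub} specialised at $k=\tDd$. That proposition guarantees $\mc R(\tFd[k,\alpha/2]\mid\rwC, A\tRdK{k})\leq\alpha$ for every $A\geq\oA_\alpha$ as long as the side condition $2k\leq n^2$ holds. By the very definition of $\tDd$ in \eqref{tDd} we have $\tRdK{\tDd}=\tRd$, so the corollary reduces to verifying that the hypothesis $n\geq\sqrt{2}|\fedf[1]|^{-2}$ forces $2\tDd\leq n^2$.

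For this verification, first note that because $\edf$ is a real probability density on $[0,1)$, its Fourier coefficients satisfy $|\fedf[j]|\leq\fedf[0]=1$ for every $j\in\Zz$, and therefore $m_k=\max_{|j|\in\nset{k}}|\fedf[j]|^{-1}\geq 1$ for each $k\in\IN$. Plugging this into \eqref{tDd} gives the lower bound
\begin{equation*}
\tSRd\;\geq\;\frac{(2\tDd)^{1/2}}{n}\,m_{\tDd}^2\;\geq\;\frac{(2\tDd)^{1/2}}{n}.
\end{equation*}

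On the other hand, because $a_1\leq 1$, $m_1^2=|\fedf[1]|^{-2}$ and $n\geq\sqrt{2}|\fedf[1]|^{-2}$, the definition of $\tSRd$ as a minimum over $\Nz$ yields
\begin{equation*}
\tSRd\;\leq\;\tSRdK{1}\;=\;a_1^2\vee\frac{\sqrt{2}\,|\fedf[1]|^{-2}}{n}\;\leq\;1.
\end{equation*}
Combining the two displays gives $(2\tDd)^{1/2}/n\leq 1$, i.e. $2\tDd\leq n^2$, which is exactly the condition required to invoke \cref{d:re:ub} at $k=\tDd$, and the claim follows at once.

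There is no genuine obstacle here: the argument is the exact structural analogue of the proof of \cref{re:m:ub}, where the role previously played by $\nu_k^2/n$ is now taken by the direct variance proxy $(2k)^{1/2}m_k^2/n$, and where the uniform bound $|\fedf[j]|\leq 1$ for a density's Fourier coefficients replaces the appeal to $\nu_k\geq|\fedf[1]|^{-1}$. The only thing to double-check when writing this out is that the chosen threshold $n\geq\sqrt{2}|\fedf[1]|^{-2}$ precisely matches the constant $\sqrt{2}$ appearing in $\tSRdK{1}$, which it does.
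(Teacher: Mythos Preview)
Your proof is correct and follows essentially the same route as the paper's own argument: reduce to \cref{d:re:ub} at $k=\tDd$ and verify $2\tDd\leq n^2$ via the chain $1\geq\tSRdK{1}\geq\tSRd\geq(2\tDd)^{1/2}/n$. You simply make explicit the ingredients (namely $|\fedf[j]|\leq1$ hence $m_k\geq1$, and $a_1\leq1$) that the paper leaves implicit.
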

\begin{proof}[Proof of \cref{d:re:m:ub} ]The result follows immediately
  from \cref{d:re:ub}, since $2\tDd\leq n^2$ for all $n \geq
  \sqrt{2} \lv \epsden[1] \rv^{-2}$. Indeed, $n \geq \sqrt{2}
  \lv \epsden[1] \rv^{-2}$ implies $1\geq \tSRdK{1}\geq \tSRd\geq
  (2\tDd)^{1/2} n^{-1}$.
\end{proof}

\begin{remark}[Optimality of the direct testing procedure]
	Let us compare the upper bound $\tSRd = \min_{k \in \IN} \lcb a_k^2 \vee  (2k)^{1/2}m_k^2n^{-1} \rcb$ for the direct testing procedure with the minimax radius of testing $\tSRi = \min_{k \in \IN} \lcb a_k^2 \vee \nu_k^2n^{-1} \rcb$. If there exists a constant $c > 0$ such that
	\begin{align}
		\label{cond:direct:optimal}
          \nu_k^2=\big(\sum_{|j|\in\nset{k}} \lv \epsden[j]
          \rv^{-4}\big)^{1/2}\leq (\max_{|j|\in\nset{k}} |\fedf[j]|^{-2})(2k)^{1/2}=m_k^2(2k)^{1/2}\leq   c \nu_k^2,
	\end{align}
	then $\tRd$ and $\tRi$ are of the same order
        and, thus, the direct testing procedure is minimax
        optimal. Condition \eqref{cond:direct:optimal} is for instance
        satisfied for a mildly ill-posed model, 
        i.e. if  $\Nsuite[j]{|\fedf[j]|}$ decays  polynomially. Note,
        however, that \eqref{cond:direct:optimal} is a sufficient but
        not a necessary condition. For a severely ill-posed model,
        i.e. if $\Nsuite[j]{|\fedf[j]|}$ decays exponentially,
        \eqref{cond:direct:optimal} is not fulfilled. Nevertheless,
        the direct testing procedure still performs optimally (see  \cref{d:ill} below). 
\end{remark}

\begin{illustration}
  \label{d:ill}
  We illustrate the order of the upper bound for the radius of testing of the direct test $\tFd[\tDd,\alpha/2]$ 
        under the
	regularity and ill-posedness assumptions introduced in \cref{ill}. Comparing the resulting upper bounds $\tSRd$ with the  radii $\tSRi$, 
        we conclude that the direct test performs as well as the
        indirect test  in all three cases.\\ [3ex]
	  			  			\centerline{\begin{tabular}{ll|l|l}\toprule
			\multicolumn{4}{c}{Order of the optimal dimension $	\tDd$ and the upper bound $\tSRd$}\\ \midrule
			${\wC[j]}$ & ${|\fedf[j]|}$ & $	\tDd$ & $\tSRd$   \\
			(smoothness) & (ill-posedness) & & \\
			\midrule
			${ j^{-\sPara}}$ & ${\lv j \rv^{-\pPara}}$ &  $n^{\tfrac{2}{4\pPara + 4\sPara + 1}}$ & $\say^{-\tfrac{4\sPara}{4\sPara + 4 \pPara + 1}}$  \\
			${ j^{-\sPara}}$ & 	$ {e^{-\lv j \rv^{\pPara}}}$ & $(\log n)^{\tfrac{1}{\pPara}}$ & $(\log \say)^{-\tfrac{2\sPara}{\pPara}}$  \\
			${e^{-j^{\sPara}}}$ & $ {\lv j \rv^{-\pPara}}$& $ (\log n)^{\tfrac{1}{\sPara}}$ & $\say^{-1} (\log n)^{\tfrac{2 \pPara + 1/2}{\sPara}}$   \\
			\bottomrule
            \end{tabular}}\\[3ex]
   In a severely ill-posed model with ordinary smoothness,  the order of the optimal dimension parameter does not depend on the smoothness parameter. Hence, the test $\tFd[\tDd,\alpha/2]$ is automatically adaptive with respect to ordinary smoothness and no aggregation procedure is needed.
\end{illustration}

\section{Adaptive direct testing procedure}\label{sec:a:u:b:d}
\paragraph{Adaptation procedure via Bonferroni aggregation.}  The
choice of the optimal dimension parameter used in the test
$\tFd[\tDd,\alpha/2]$ in \cref{d:re:m:ub} requires the knowledge of the
regularity parameter $\wC$ belonging  to a set  $\altcollect$ 
of strictly positive, monotonically non-increasing sequences bounded by $1$.
Let $\cK \subseteq \IN$ be a finite
collection of dimension parameters. Applying the Bonferroni
aggregation method described in \cref{sec:a:u:b} to the collection of
direct tests $(\tFd[k,\alpha/\lv \mc K \rv])_{k \in \mc K}$
we introduce the direct $\max$-test with Bonferroni choice of the error level, 
i.e. 
\begin{align*}
	\tFda := \Ind{\{ \tSdM > 0\}} \qquad \text{ with } \qquad \tSdM: = \max_{k \in \mc K} \lb \tSd - \tSdCa{\tfrac{\alpha}{|\cK|}}\rb.
\end{align*}
\paragraph{Testing radius of the indirect $\max$-test.}
We define the minimal achievable radius of testing $\tRdaN{n}$ over the set $\mc K$  in terms of $m_k$ as in \eqref{nu} and the regularity parameter $\wdclass[\mbullet] \in \altcollect$ by
\begin{align*}
\tRdaN{x} := \min_{k \in \mc K}\tRdKN{k}{x} \quad\text{ with
}\quad\tSRdKN{k}{x}:=  a_k^2 \vee (2k)^{1/2}m_k^2x^{-1} \quad\text{ for all }x\in\pRz. 
\end{align*}
Since $\tRdN{n}=\tRdKN{\Nz\,}{n}$  in \eqref{tDd} is defined as  minimum taken over $\IN$
instead of $\mc K$, for all $n\in\Nz$ we always have $\tRdaN{n} \geq \tRdN{n}$. Let us furthermore recall the remainder radius $\tSReaN{x}=\min_{k \in \mc K} \{a_k^2 \vee m_k^2x^{-1}\}$ defined in
\eqref{tSReaN} for $x\in\pRz$.
\begin{proposition}[Uniform radius of testing over $\altcollect$]
  \label{d:adapt:ub}
  Under the assumptions of \cref{d:quantiles}  
  let  $\alpha \in (0,1)$ and consider $\oA_\alpha$ as in \eqref{re:ub:A}, 
    Then, for all $A \geq \oA_\alpha$ and for all $n\in\Nz$
  \begin{align*}
    \sup_{\wC \in \altcollect} \trisk[{\tFd[\cK,\alpha/2]} \mid \rwC, A(\tReaN{\delta^2n}\vee \tRdaN{\delta n})(1\vee\delta^{-3/2}\tRdaN{\delta n}) ]  \leq \alpha
	\end{align*}
	with	$\delta = (1+\log |\cK|)^{-1/2}$.
\end{proposition}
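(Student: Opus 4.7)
The plan is to mirror step by step the proof of \cref{adapt:ub}, replacing the indirect ingredients by their direct counterparts: use \cref{d:quantiles} in place of \cref{quantiles}, the threshold \eqref{d:tau} in place of \eqref{tau}, and the transfer identity $\qF_k(\xdf-\xdfO)\leq m_k^2\,\qF_k(\ydf-\ydfO)$ whenever we need to translate a separation condition on $\Vnorm[{\Lp[2]}]{\xdf-\xdfO}^2$ into one on $\qF_k(\ydf-\ydfO)$ (which is the separation term appearing in the direct decomposition \eqref{d:decomposition}). As in the indirect case, the type I bound follows from the Bonferroni estimate $\Pf[\xdfO](\tFda[\cK,\alpha/2]=1)\leq\sum_{k\in\cK}\alpha/(2|\cK|)=\alpha/2$ combined with \cref{d:quantiles}\,\ref{d:alpha:quantile}, and the type II bound reduces via \eqref{adapt:alternative} to showing two claims analogous to \ref{adapt:ub:bew:c1} and \ref{adapt:ub:bew:c2}.

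For the analog of \ref{adapt:ub:bew:c1}, I would first bound the Bonferroni-inflated threshold using $L_{\alpha/(2|\cK|)}^2\leq L_{\alpha/2}^2\delta^{-2}$, giving
$\tSdCa{\alpha/(2|\cK|)}\leq (c_1+c_2)L_{\alpha/2}^4\bigl((1\vee\tfrac{(2k)^{1/4}}{\delta^2 n^{1/2}}\vee\tfrac{(2k)^{1/2}}{\delta^3 n})\tfrac{(2k)^{1/2}}{\delta n}\vee\tfrac{1}{\delta^2 n}\bigr).$
Using $\xdf-\xdfO\in\rwC$, I would combine $\qF_k(\xdf-\xdfO)\geq\Vnorm[{\Lp[2]}]{\xdf-\xdfO}^2-a_k^2R^2$ with the transfer $\qF_k(\ydf-\ydfO)\geq m_k^{-2}\qF_k(\xdf-\xdfO)$, so that the condition
$\Vnorm[{\Lp[2]}]{\xdf-\xdfO}^2\geq\oA_\alpha^2\bigl(\wC[k]^2\vee(1\vee\tfrac{(2k)^{1/4}}{\delta^2 n^{1/2}}\vee\tfrac{(2k)^{1/2}}{\delta^3 n})\tfrac{(2k)^{1/2}m_k^2}{\delta n}\vee\tfrac{m_k^2}{\delta^2 n}\bigr)$
implies \eqref{d:separation} and hence, by \cref{d:quantiles}\,\ref{d:beta:1} with $\beta=\alpha/2$, a type II error bound of $\alpha/2$ for that $k$.

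For the analog of \ref{adapt:ub:bew:c2}, I would invoke the direct counterpart of Lemma A.1 in \cite{SchluttenhoferJohannes2020} to produce a $k\in\cK$ satisfying simultaneously $\tSReaN{\delta^2 n}\vee\tSRdaN{\delta n}=\wC[k]^2\vee\tfrac{m_k^2}{\delta^2 n}\vee\tfrac{(2k)^{1/2}m_k^2}{\delta n}$ and $\tSRdaN{\delta n}\geq\tfrac{(2k)^{1/2}m_k^2}{\delta n}$. Multiplying by $(1\vee\delta^{-3}\tSRdaN{\delta n})$ yields a lower bound
$\wC[k]^2\vee\tfrac{m_k^2}{\delta^2 n}\vee\tfrac{(2k)^{1/2}m_k^2}{\delta n}\vee\tfrac{(2k)m_k^4}{\delta^5 n^2},$
which I need to compare term-by-term to the bracket in the individual-test condition above. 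The terms $\wC[k]^2$, $m_k^2/(\delta^2 n)$, $(2k)^{1/2}m_k^2/(\delta n)$ match directly; the two remaining terms $\tfrac{(2k)^{3/4}m_k^2}{\delta^3 n^{3/2}}$ and $\tfrac{(2k)m_k^2}{\delta^4 n^2}$ are absorbed into $\tfrac{(2k)m_k^4}{\delta^5 n^2}$, the first by AM-GM
$\tfrac{(2k)^{3/4}m_k^2}{\delta^3 n^{3/2}}\leq\tfrac{(2k)^{1/2}m_k^2}{\delta n}\vee\tfrac{(2k)m_k^2}{\delta^5 n^2}$
together with $m_k^2\geq 1$, and the second by $m_k^2\geq 1$ and $\delta\leq 1$ giving $m_k^2\delta^{-1}\geq 1$.

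The main obstacle is precisely this final algebraic absorption step: unlike the indirect case, where the multiplicative factor of $\nu_k^2$ in every variance term made the chain of inequalities transparent, the direct threshold mixes the dimension scale $(2k)^{1/2}$ with $m_k^2$, and the upgrade from $m_k^2$ to $m_k^4$ requires both that $m_k^2\geq 1$ (which follows from $|\fedf[j]|\leq\int|\edf|=1$) and that $\delta\leq 1$. Once this absorption is established, the remainder of the argument is bookkeeping: calibrate $\oA_\alpha^2-R^2\geq 2(c_1+c_2+c_3)L_{\alpha/4}^4$ exactly as in the proof of \cref{adapt:ub}, conclude \ref{adapt:ub:bew:c1} and \ref{adapt:ub:bew:c2} for the direct objects, and assemble the uniform risk bound $\leq\alpha$ via \eqref{adapt:alternative}.
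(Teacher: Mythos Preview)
Your proposal is correct and follows essentially the same route as the paper: Bonferroni for type~I, \cref{d:quantiles}\,\ref{d:beta:1} together with the transfer $\qF_k(\xdf-\xdfO)\leq m_k^2\qF_k(\ydf-\ydfO)$ for claim \ref{d:adapt:ub:bew:c1}, and Lemma~A.1 of \cite{SchluttenhoferJohannes2020} for claim \ref{d:adapt:ub:bew:c2}. The only cosmetic difference is in the algebra of \ref{d:adapt:ub:bew:c2}: the paper drops the $m_k^2$ factor immediately, using $\tSRdaN{\delta n}\geq (2k)^{1/2}/(\delta n)$ and then the elementary bound $1\vee\tfrac{(2k)^{1/2}}{\delta^4 n}\geq 1\vee\tfrac{(2k)^{1/4}}{\delta^2 n^{1/2}}\vee\tfrac{(2k)^{1/2}}{\delta^3 n}$, which avoids your AM--GM absorption step and the explicit appeal to $m_k^2\geq1$ at that stage.
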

\begin{proof}[Proof of \cref{d:adapt:ub}]
  The proof follows along the lines of the proof of \cref{adapt:ub}
  making use of \cref{d:quantiles} rather than
  \cref{quantiles}. Similarly to
  \eqref{adapt:null} together with \cref{d:quantiles}
  \ref{d:alpha:quantile} and
  $\sum_{k \in \mc K} \frac{\alpha}{2|\cK|} = \frac{\alpha}{2}$ it follows
  that the type I
  error probability is bounded by $\alpha/2$.   Under the
  alternative, let $\xdf\in\Lp[2]$ with $\xdf-\xdfO\in
  \rwC$ satisfy
  \begin{equation}
    \label{d:adapt:ub:bew:e1}
    \Vnorm[{\Lp[2]}]{\xdf-\xdfO}^2 \geq \oA_\alpha^2(\tSReaN{\delta^2n}\vee \tSRdaN{\delta n})(1\vee\delta^{-3}\tSRdaN{\delta n}).
  \end{equation}
  It is sufficient to use the elementary  bound
  \eqref{adapt:alternative} together with the observation that 
  \begin{resListeN}
  \item\label{d:adapt:ub:bew:c1}
    $ \FuVg{\xdf}( \tSd < \tSdCa{\tfrac{\alpha}{2|\cK|}})\leq
\frac{\alpha}{2}$ holds for all $k\in\cK$ whenever $\xdf\in\Lp[2]$
    satisfies $\xdf-\xdfO\in \rwC$ and
  \begin{equation}
    \label{d:adapt:ub:bew:c1:e}
    \Vnorm[{\Lp[2]}]{\xdf-\xdfO}^2 \geq \oA_\alpha^2 \lb \wC[k]^2\vee(1 \vee
  \frac{(2k)^{1/4}}{\delta^2n^{1/2}} \vee \frac{(2k)^{1/2}}{\delta^3n}
  ) \frac{(2k)^{1/2}m^2_k}{\delta n} \vee \frac{m^2_k}{\delta^2n}\rb;
  \end{equation}
\item \label{d:adapt:ub:bew:c2} under \eqref{d:adapt:ub:bew:e1}  there exists $k\in\cK$ such that also  \eqref{d:adapt:ub:bew:c1:e}
  is fulfilled. 
\end{resListeN}    
Consequently,  we
have $\FuVg{\xdf}(\tFda[\alpha/2, \cK] = 0)\leq \alpha/2$ for all
$\xdf\in\Lp[2]$ satisfying  $\xdf-\xdfO\in
  \rwC$ and \eqref{d:adapt:ub:bew:e1}. Since  both the type I and the
  maximal type II error probability are bounded
  by $\alpha/2$ \cref{d:adapt:ub} follows immediately from the
definition of the risk.  It remains to show \ref{d:adapt:ub:bew:c1} and
\ref{d:adapt:ub:bew:c2}. The claim \ref{d:adapt:ub:bew:c1} follows from
\cref{d:quantiles} \ref{beta:1} (with  $\beta = \alpha/2$)  since
\eqref{d:adapt:ub:bew:c1:e} implies the
  condition \eqref{d:separation},   which  states      $ \qF_k(\ydf - \ydfO) \geq  2\big( \tSdCa{\tfrac{\alpha}{2|\cK|}}  +
      c_3 L_{\alpha/4}^4 ( 1  \vee
      (2k)^{1/2}n^{-1}) (2k)^{1/2}n^{-1}\big)              $	
    with $\tSdCa{\tfrac{\alpha}{2|\cK|}} $ as in
    \eqref{d:tau}.  Indeed,  
we have 
\begin{equation*}
\tSdCa{\tfrac{\alpha}{2|\cK|}}
  \leq (c_1+c_2)L_{\alpha/2}^4  \lb (1 \vee
  \frac{(2k)^{1/4}}{\delta^2n^{1/2}} \vee \frac{(2k)^{1/2}}{\delta^3n}
  ) \frac{(2k)^{1/2}}{\delta n} \vee \frac{1}{\delta^2n}\rb.
\end{equation*}
Using 
$\oA_\alpha^2-\rdclass^2\geq
2(c_1+c_2+c_3)L_{\alpha/4}^4$ elementary calculations show that \eqref{d:separation} holds whenever
\begin{equation}\label{d:cond}
  \qF_k(\ydf - \ydfO) \geq  \big(\oA_\alpha^2-\rdclass^2\big)
  \lb (1 \vee
  \frac{(2k)^{1/4}}{\delta^2n^{1/2}} \vee \frac{(2k)^{1/2}}{\delta^3n}
  ) \frac{(2k)^{1/2}}{\delta n} \vee \frac{1}{\delta^2n}\rb.
\end{equation}	
Due to  $\xdf-\xdfO\in
  \rwC$ and hence $ \sum_{\lv j \rv > k}  |\fxdf[j] - \fxdfO[j]|^2 \leq
  \wdclass[k]^2 \rdclass^2$,  the condition \eqref{d:adapt:ub:bew:c1:e} implies
 \begin{multline*}
   \qF_k(\xdf -\xdfO)  = \Vnorm[{\Lp[2]}]{\xdf-\xdfO}^2-
    \sum_{\lv j \rv > k} |\fxdf[j] - \fxdfO[j]|^2\\\geq
    m_k^2\big(\oA_\alpha^2-\rdclass^2\big)
  \lb (1 \vee
  \frac{(2k)^{1/4}}{\delta^2n^{1/2}} \vee \frac{(2k)^{1/2}}{\delta^3n}
  ) \frac{(2k)^{1/2}}{\delta n} \vee \frac{1}{\delta^2n}\rb.
\end{multline*}
As a consequence, if \eqref{d:adapt:ub:bew:c1:e} is satisfied then due
to    $m_k^2\qF_{k}(\ydf-\ydfO)\geq \qF_k(\xdf -\xdfO)$  also
\eqref{d:cond}, and thus \eqref{d:separation}, which shows the claim
\ref{adapt:ub:bew:c1}. Lastly, consider \ref{d:adapt:ub:bew:c2}. By Lemma A.1 in
\cite{SchluttenhoferJohannes2020} we have  $\tSReaN{\delta^2n}\vee \tSRdaN{\delta
  n}
=\wC[k]\vee
\tfrac{m_k^2}{\delta^{2}n}\vee \tfrac{(2k)^{1/2}m_k^2}{\delta n}$ and $\tSRdaN{\delta
  n}\geq (2k)^{1/2}\delta^{-1}n^{-1}$ for  at least one $k \in
\cK$. 
Hence, there is $k\in\cK$ with $\big(\tSReaN{\delta^2n}\vee\tSRdaN{\delta n}\big)(1\vee\tfrac{\tSRdaN{\delta
      n}}{\delta^{3}})\geq
  \wC[k]^2\vee\tfrac{m_k^2}{\delta^{2}n}\vee\big( \tfrac{(2k)^{1/2}m_k^2}{\delta
    n}(1\vee \tfrac{(2k)^{1/2}}{\delta^{4}n})\big)$. Since $1 \vee
 \tfrac{(2k)^{1/2}}{\delta^4n}\geq1 \vee
  \frac{(2k)^{1/4}}{\delta^2n^{1/2}} \vee \frac{(2k)^{1/2}}{\delta^3n}
  $, this shows
  \ref{d:adapt:ub:bew:c2} and completes the proof.
\end{proof}

\begin{corollary}[Worst-case adaptive factor]
  \label{d:cor:worst}  Under the assumptions of \cref{d:quantiles}  
  let  $\alpha \in (0,1)$ and consider $\oA_\alpha$ as in \eqref{d:re:ub:A}. 
  Then   for
  all $A \geq \oA_\alpha$ and $n\in\Nz$  
  \begin{equation*}
    \sup_{\wC \in \altcollect} \trisk[{\tFda[\cK,\alpha/2]}  \mid \rwC, A \tRdaN{\delta^2 n}(1\vee\tRdaN{\delta^2 n}) ]  \leq \alpha.
  \end{equation*}
  with  $\delta = (1+\log \lv \mc K \rv)^{-1/2} $.
\end{corollary}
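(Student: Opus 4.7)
The plan is to follow the blueprint of \cref{cor:worst}, adapted to the direct setting by substituting the direct-test analogues throughout: \cref{d:quantiles} for \cref{quantiles}, \cref{d:adapt:ub} for \cref{adapt:ub}, and $\tSRdaN$ in place of $\tSRiaN$. As in the proof of \cref{d:adapt:ub}, the Bonferroni bound \eqref{adapt:null} together with \cref{d:quantiles} \ref{d:alpha:quantile} already handles the type I error, bounding it by $\alpha/2$. Hence only the maximal type II error requires new input, and it suffices to prove the analogue of step \ref{d:adapt:ub:bew:c2} in the proof of \cref{d:adapt:ub} after strengthening the hypothesis \eqref{d:adapt:ub:bew:e1} to
\begin{equation*}
  \Vnorm[{\Lp[2]}]{\xdf-\xdfO}^2 \geq \oA_\alpha^2\,\tSRdaN{\delta^2 n}\,(1\vee \tSRdaN{\delta^2 n}).
\end{equation*}

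Next I would pick the natural candidate $k := \argmin_{k' \in \cK} \tSRdKN{k'}{\delta^2 n}$ and set $s := \tSRdaN{\delta^2 n} = \wC[k]^2 \vee (2k)^{1/2} m_k^2 (\delta^2 n)^{-1}$. The goal then reduces to establishing the elementary inequality
\begin{equation*}
  s\,(1 \vee s) \geq \wC[k]^2 \vee \tfrac{m_k^2}{\delta^2 n} \vee \tfrac{(2k)^{1/2} m_k^2}{\delta n}\lb 1 \vee \tfrac{(2k)^{1/4}}{\delta^2 n^{1/2}} \vee \tfrac{(2k)^{1/2}}{\delta^3 n}\rb,
\end{equation*}
which is exactly the right-hand side of \eqref{d:adapt:ub:bew:c1:e}. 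Once this is in hand, the remainder of the argument transfers verbatim from the proof of \cref{d:adapt:ub}.

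The verification of this inequality I would carry out by a short case analysis. The bounds $s \geq \wC[k]^2$, $s \geq m_k^2 (\delta^2 n)^{-1}$ (via $(2k)^{1/2} \geq 1$) and $s \geq (2k)^{1/2} m_k^2 (\delta n)^{-1}$ (via $\delta \leq 1$) are immediate. For the mixed term $(2k)^{3/4} m_k^2 (\delta^3 n^{3/2})^{-1}$ I would invoke AM-GM on $s \vee s^2$ to obtain
\begin{equation*}
  s \vee s^2 \geq \sqrt{\tfrac{(2k)^{1/2} m_k^2}{\delta^2 n}\cdot \tfrac{2k\, m_k^4}{\delta^4 n^2}} = \tfrac{(2k)^{3/4} m_k^3}{\delta^3 n^{3/2}} \geq \tfrac{(2k)^{3/4} m_k^2}{\delta^3 n^{3/2}},
\end{equation*}
the last step using $m_k\geq1$. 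The remaining term $2k\, m_k^2 (\delta^4 n^2)^{-1}$ is disposed of by the same split on $s\gtrless 1$ together with $m_k\geq 1$.

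The main obstacle is thus not conceptual but a matter of careful book-keeping of the factors of $\delta$, $(2k)^{1/2}$ and $m_k$; the key fact that closes the AM-GM bound on the mixed term is $m_k = \max_{|j|\in\nset{k}} |\fedf[j]|^{-1} \geq 1$, which follows from the elementary bound $|\fedf[j]|\leq 1$ valid for any probability density $\edf$. Without this, the exponent mismatch between $m_k^3$ from AM-GM and $m_k^2$ in the target would not close.
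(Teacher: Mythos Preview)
Your proposal is correct and follows essentially the same approach as the paper: both reduce to verifying step \ref{d:adapt:ub:bew:c2} of the proof of \cref{d:adapt:ub} at the minimiser $k=\argmin_{k'\in\cK}\tSRdKN{k'}{\delta^2 n}$, i.e.\ establishing the elementary comparison between $\tSRdaN{\delta^2 n}(1\vee\tSRdaN{\delta^2 n})$ and the right-hand side of \eqref{d:adapt:ub:bew:c1:e}. The paper executes this in one chain via the identity $1\vee s = 1\vee s^{1/2}\vee s$ followed by a term-by-term comparison and a redistribution of $\delta$-factors, whereas you carry out an explicit case split together with the bound $\max(s,s^2)\geq s^{3/2}$; both routes ultimately hinge on $m_k\geq 1$, which you make explicit and the paper leaves tacit.
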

\begin{proof}[Proof of \cref{d:cor:worst}]The result may be proved in
  much the same way as  \cref{d:adapt:ub}.    In fact,  as in the proof of \cref{d:adapt:ub}, it is sufficient to show
  \ref{d:adapt:ub:bew:c2} replacing  \eqref{d:adapt:ub:bew:e1} by
    \begin{equation}
    \label{d:cor:worst:bew:e1}
    \Vnorm[{\Lp[2]}]{\xdf-\xdfO}^2 \geq \oA_\alpha^2\tSRdaN{\delta^2
      n}(1\vee\tSRdaN{\delta^2 n}).
  \end{equation}
 For each
 $\wC\in\altcollect$  under  \eqref{d:cor:worst:bew:e1}  
  the dimension parameter  $k:=\argmin\nolimits_{k' \in \mc
   K}\tSRdKN{k'}{\delta^2 n}$ satisfies
  \begin{multline*}
    \tSRdaN{\delta^2 n}(1\vee\tSRdaN{\delta^2 n})=
    \tSRdaN{\delta^2 n}(1\vee\tRdaN{\delta^2
      n}\vee\tSRdaN{\delta^2 n})\\\geq
    \wC[k]^2\vee\tfrac{m_k^2(2k)^{1/2}}{\delta^{2}n}(1\vee\tfrac{(2k)^{1/4}}{\delta
      n^{1/2}} \vee\tfrac{(2k)^{1/2}}{\delta^{2}n})\geq
    \wC[k]^2\vee\tfrac{m_k^2}{\delta^{2}n}\vee\tfrac{m_k^2(2k)^{1/2}}{\delta
      n}(1\vee\tfrac{(2k)^{1/4}}{\delta^2 n^{1/2}}
    \vee\tfrac{(2k)^{1/2}}{\delta^{3}n})
  \end{multline*}
  since
  $\wC[k]^2\vee\tfrac{m_k^2(2k)^{1/2}}{\delta^{2}n}=\tSRdaN{\delta^2
    n}$ and $\delta\leq1$. This shows \eqref{d:adapt:ub:bew:c1:e}  and, in consequence, \ref{d:adapt:ub:bew:c2}. We
  obtain the assertion  proceeding exactly as
  in the proof of \cref{d:adapt:ub}. 
\end{proof}
By
\cref{d:cor:worst}, $\tSRdaN{\delta^2 n}$ is an upper bound
for the radius of testing of the  direct $\max$-test if $\tSRdaN{\delta^2 n}\leq 1$. The latter   is satisfied
for an arbitrary regularity parameter $\wC\in\altcollect$, if $1
\in  \mc K$ and $ n \geq \sqrt{2} \lv \epsden[1] \rv^{-2}(
1+\log|\cK|) $.  The next corollary establishes $\tSRdaN{\delta n}$ as
a sharper upper bound for the radius of testing of the  direct $\max$-test
under additional conditions, which are satisfied in all the examples considered in \cref{d:ill:adapt}
below.
The result follows
immediately from \cref{d:adapt:ub} and we omit its proof. 

\begin{corollary}[Best-case adaptive factor] \label{b:c:a:f}
  Under the assumptions of \cref{d:quantiles} let $\alpha \in (0,1)$ and
  consider
  $\oA_\alpha$ as in \eqref{d:re:ub:A}. If there exists a constant $C\geq1$
  such that $\tReaN{\delta^2n}\leq C \tRdaN{\delta n}$ and
  $\tRdaN{\delta n}\leq C \delta^{3/2}$ for all
  $\wC \in \altcollect$, then for all
  $A \geq C^2 \overline{A}_{\alpha}$ and $n\in\Nz$
  \begin{equation*}
    \sup_{\wC \in \altcollect} \trisk[{\tFda[\cK,\alpha/2]}  \mid \rwC, A\tRdaN{\delta n} ]  \leq \alpha
  \end{equation*}
   with
  $\delta = (1+\log \lv \mc K \rv)^{-1/2} $.
\end{corollary}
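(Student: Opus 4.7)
The plan is to derive the cleaner bound by simply post-processing the conclusion of \cref{d:adapt:ub}. The key observation is that the separation radius appearing in \cref{d:adapt:ub} is
\[
  \sera_{\text{full}} := (\tReaN{\delta^2n}\vee \tRdaN{\delta n})\bigl(1\vee\delta^{-3/2}\tRdaN{\delta n}\bigr),
\]
and that the two standing hypotheses collapse each of its two factors onto a constant multiple of $\tRdaN{\delta n}$. Since the testing risk $\trisk[\cdot\mid\rwC,\sera]$ is monotonically non-increasing in $\sera$ (the set $\Lp[2]_{\sera}$ shrinks as $\sera$ grows), upgrading the separation radius from $\sera_{\text{full}}$ to something larger can only decrease the maximal risk. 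All that remains is to compare $A\tRdaN{\delta n}$ to a suitable multiple of $\sera_{\text{full}}$.

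First I would use $C\geq1$ and the hypothesis $\tReaN{\delta^2n}\leq C\tRdaN{\delta n}$ to obtain
$\tReaN{\delta^2n}\vee \tRdaN{\delta n}\leq C\tRdaN{\delta n}$ uniformly over $\wC\in\altcollect$. Second, the hypothesis $\tRdaN{\delta n}\leq C\delta^{3/2}$ yields $\delta^{-3/2}\tRdaN{\delta n}\leq C$, and therefore $1\vee \delta^{-3/2}\tRdaN{\delta n}\leq C$. Multiplying the two estimates gives the pointwise bound $\sera_{\text{full}}\leq C^2\tRdaN{\delta n}$ for every $\wC\in\altcollect$.

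Now fix $A\geq C^2\overline{A}_\alpha$ and set $A_0:=A/C^2\geq\overline{A}_\alpha$. Applying \cref{d:adapt:ub} with the admissible constant $A_0$ gives
\[
  \sup_{\wC\in\altcollect}\trisk\bigl[\tFda[\cK,\alpha/2]\,\bigl|\,\rwC,\,A_0\,\sera_{\text{full}}\bigr]\leq\alpha.
\]
Combining the pointwise inequality $A_0\sera_{\text{full}}\leq A_0 C^2\tRdaN{\delta n}=A\tRdaN{\delta n}$ with the monotonicity of $\sera\mapsto\trisk[\cdot\mid\rwC,\sera]$ shows that the supremum of the risk at separation $A\tRdaN{\delta n}$ is bounded by the supremum at $A_0\sera_{\text{full}}$, hence by $\alpha$.

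There is essentially no obstacle here: the only subtlety is to make sure the monotonicity argument is applied \emph{before} the supremum over $\altcollect$, since the two simplifying inequalities hold uniformly in $\wC$ but the constant in front of $\tRdaN{\delta n}$ must not depend on $\wC$. This is automatic because $C$ is the uniform constant supplied by the hypotheses. No new probabilistic input beyond \cref{d:adapt:ub} is required, which is exactly why this corollary is stated as following immediately from the proposition.
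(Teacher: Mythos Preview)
Your proposal is correct and is exactly the derivation the paper has in mind: the paper explicitly states that the result follows immediately from \cref{d:adapt:ub} and omits the proof, and your argument---collapsing each factor of the full separation radius via the two hypotheses and then invoking monotonicity of the risk in the separation radius---is precisely that immediate deduction.
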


Concerning the choice of the collection $\cK$ of dimensions we refer
to \cref{choice:of:K}
\begin{illustration}
  \label{d:ill:adapt} 
  For the typical configurations for regularity and ill-posedness
  introduced in \cref{ill} the tables below display the adaptive radii
  of the direct $\max$-test $\tFda[\cK,\alpha/2]$ for appropriately
  chosen grids. In a mildly ill-posed model with ordinary smoothness
  we choose the geometric grid $\cK_g$ and hence consider
  $\delta=(1+\log |\cK_g|)^{-1/2}$. It is easily seen that the
  remainder term $\tSReacKN{\cK_g}{\delta^2 n}$ is asymptotically
  negligible compared with $\tSRdKN{\cK_g}{\delta n} $. Moreover,
  $\delta^{-3}\tSRdKN{\cK_g}{\delta n}$ tends to zero. Hence, the
  upper bound in \cref{d:adapt:ub} asymptotically reduces to
  $\tSRdKN{\cK_g}{\delta n}$ featuring an adaptive factor of order
  $(\log\log n)^{1/2}$ as can be seen in the table below. In a
  severely ill-posed model we have seen in \cref{d:ill} that a direct
  test with optimal dimension is automatically adaptive with respect
  to ordinary smoothness. However, the optimal dimension depends on
  the ill-posedness parameter. Therefore, we consider here the
  $\max$-test $\tFd[\cK_g,\alpha/2]$ also in this situation.  Since
  $\delta^{-3}\tSRdKN{\cK_g}{\delta n}$ tends to zero, and both the
  remainder term $\tSReacKN{\cK_g}{\delta^2 n}$ and
  $\tSRdKN{\cK_g}{\delta n}$ are of the same order as
  $\tSRdKN{\cK_g}{n} $, the upper bound in \cref{d:adapt:ub}
  asymptotically reduces to $ \tSRdKN{\cK_g}{n} $.
  \\ [3ex]
  \centerline{%
    \begin{tabular}{ll|l|l}\toprule \multicolumn{4}{l}{Order
                of $\tSReacKN{\cK_g}{\delta^2 n}$ and
                $\tSRdKN{\cK_g}{\delta n}$ with $\delta=(1+\log
                |\cK_g|)^{-1/2}$}\\
                \multicolumn{4}{r}{and
                $\cK_g= \{1\}\cup\{2^j,j\in\nset{\lfloor
                \log_2({n^2}/{2})
                \rfloor}\}$} \\
                \midrule
                ${\wC[j]}$ & ${| \fedf[j]|}$ & $\tSReacKN{\cK_g}{\delta^2 n}$ &$\tSRdKN{\cK_g}{\delta n}$   \\
                (smoothness) & (ill-posedness) &  \\
                \midrule
                ${ j^{-\sPara}}$ & ${\lv j \rv^{-\pPara}}$  & $\lb \frac{\say}{{\log\log n}} \rb^{-\tfrac{4\sPara}{4\sPara + 4 \pPara}}$  & $\lb \frac{\say}{(\log\log n)^{1/2}} \rb^{-\tfrac{4\sPara}{4\sPara + 4 \pPara + 1}}$  \\
                ${ j^{-\sPara}}$ & $ {e^{-\lv j \rv^{\pPara}}}$  & $(\log \say)^{-\tfrac{2\sPara}{\pPara}}$  & $(\log \say)^{-\tfrac{2\sPara}{\pPara}}$  \\
                \bottomrule
    \end{tabular} }
  \\[3ex]	
  In a mildly ill-posed model with super smoothness, we choose the smaller geometric grid $\cK_{\sPara_\star}$ for adaptation to smoothness $\sPara \geq\sPara_\star$ and hence $\delta=(1+\log
  |\cK_{\sPara_\star}|)^{-1/2}$. It is easily seen that the remainder term
  $\tSReacKN{\cK_{\sPara_\star}}{\delta^2 n}$   is asymptotically negligible
  compared with $\tSRdKN{\cK_{\sPara_\star}}{\delta n} $ and  $\delta^{-3}\tSRdKN{\cK_{\sPara_\star}}{\delta n}$ tends to zero. Hence, the upper bound in \cref{d:adapt:ub}  asymptotically reduces to
  $\tSRdKN{\cK_{\sPara_\star}}{\delta n}$ featuring an adaptive factor of order
  $(\log\log\log n)^{1/2}$.\\ [3ex]
  \centerline{%
    \begin{tabular}{ll|l|l}\toprule
      \multicolumn{4}{l}{Order of $\tSReacKN{\cK_{\sPara_\star}}{\delta^2
      n}$ and $\tSRiKN{\cK_{\sPara_\star}}{\delta n}$ with  $\delta=(1+\log
      |\cK_{\sPara_\star}|)^{-1/2}$}\\
      \multicolumn{4}{r}{and  $\cK_{\sPara_\star}=
      \{1\}\cup\{2^j,j\in\nset{\lfloor {s^{-1}_{\star}} \log_2 \log n
      \rfloor}\}$} \\
      \midrule
      ${\wC[j]}$ & ${|\fedf[j]|}$ & $\tSReacKN{\cK_{\sPara_\star}}{\delta^2 n}$ & $\tSRiKN{\cK_{\sPara_\star}}{\delta n}$    \\
      (smoothness) & (ill-posedness) &  \\
      \midrule
      ${ e^{-j^\sPara}}$ & ${\lv j \rv^{-\pPara}}$  & $ \frac{{\log\log\log n}}{n} (\log n)^{\frac{2 \pPara}{\sPara}}$  &  $ \frac{(\log\log\log n)^{1/2}}{n}  (\log n)^{\frac{2 \pPara + 1/2}{\sPara}}$  \\
      \bottomrule
    \end{tabular}}
  \\[3ex]
  We conclude that in all the cases considered in this illustration the direct $\max$-test achieves a testing radius of the same order as the indirect $\max$-test. 
\end{illustration}
We emphasise that in the case $\xdfO = \mathds{1}_{[0,1)}$ the direct
$\max$-test in contrast to the indirect $\max$-test does not require
any knowledge about the error density $\edf$. Indeed, neither the test
statistic $\tSd$ in \eqref{tqfk} nor the threshold $ \tSdC $ in
\eqref{d:tau} depends on characteristics of the error density $\edf$.
However, in a mildly ill-posed model with ordinary smoothness both the
direct and the indirect $\max$-test feature an adaptive factor of
order $(\log\log n)^{1/2}$ which we show below is unavoidable when
testing for uniformity. Moreover, considering only super smooth
densities in a mildly ill-posed model both the direct and the indirect
$\max$-test share an adaptive factor of order
$(\log\log\log n)^{1/2}$ which we show below also is unavoidable when
testing for uniformity. It is important to note that without any
prior knowledge about the ill-posedness of the model the direct
$\max$-test for uniformity attains the optimal testing radius
simultaneously in a mildly and severely ill-posed model with ordinary
smoothness.

\section{Lower bound}\label{sec:l:b}
Throughout this section we consider testing for uniformity, i.e.  $\xdfO 
=\mathds{1}_{[0,1)}$. The next proposition states general conditions on the class
$\altcollect$ under which an adaptive factor $\delta^{-1}$ is an
unavoidable cost to pay for adaptation over $\altcollect$. The proof
of \cref{prop:adaptive:lb} makes use of \cref{adapt:chi2}  in the
appendix, which provides  a bound on the $\chi^2$ divergence between
the null and a mixture over several alternative classes.  
Inspired by Assouad's cube technique the candidate densities, i.e. the
vertices of  the hypercubes, are constructed such that, roughly speaking,  they are statistically indistinguishable from the null $\xdfO$ while having largest possible $\Lp[2]$-distance. 
\begin{proposition}[\textbf{Adaptive lower bound}]
  \label{prop:adaptive:lb}
  Let $\alpha \in (0,1)$ and $\delta \in (0,1]$. Assume a 
   collection of $N\in\Nz$ regularity parameters $\{ \mwC{m}\in \altcollect: m
   \in\nset{N} \}$, where we abbreviate
   $\mtRi:=\tRiN[\mwC{m}]{ \delta n}$ with associated $\mDi := \tDi[\mwC{m}]$ for
   $m \in\nset{N}$ as in \eqref{tDi}, satisfies the following four conditions:
   \begin{resListeN}[\setListe{-0.5ex}{1ex}{4ex}{0ex}\renewcommand{\theListeN}{\small\normalfont\rmfamily\dgrau(C\arabic{ListeN})}]
   \item \label{(C1)}
     $\mDi\leq\mDi[l]$ and $ \mtRi \leq \delta \mtRi[l]$  whenever $m<l$ and  $l,m\in\nset{N}$,
   \item\label{(C2)}there is a finite constant $c_\alpha > 0$ such that $\exp(c_\alpha   \delta^{-2}) \leq N \alpha^2$,
    \item\label{(C3)} there is a  finite constant $\ssconst > 0$ such that
      $2\max_{m\in\nset{N}}\Vnorm[{\lp[2](\Nz)}]{\mwC{m}}^2\leq \ssconst$,
    \item\label{(C4)} there is a constant $\eta \in (0,1]$ such that 
  \begin{align}
    \label{eta}
    \eta \leq \min_{m \in \nset{N}}
    \frac{(\mwC[\mDi]{m})^2\wedge
    (\delta n)^{-1}\nu_{\mDi}^2}{(\mwC[\mDi]{m})^2\vee
    (\delta n)^{-1}\nu_{\mDi}^2} =  \min_{m \in \nset{N}}
    \frac{(\mwC[\mDi]{m})^2\wedge
    (\delta n)^{-1}\nu_{\mDi}^2}{(\mtRi)^2}.
  \end{align}
    \end{resListeN}
  Then, with $\uA_\alpha^2 := \eta \lb \rdclass^2 \wedge \sqrt{  \log(1+ \alpha^2)} \wedge {\ssconst}^{-1} \wedge \sqrt{{c_\alpha}} \rb$, we obtain for all $A \in [0,\uA_\alpha]$
  \begin{align}
    \label{eq:adapt:lb}
    \inf_{\test} \sup_{\wC \in \altcollect}	\trisk[ \test \mid
    \rwC, A {\tRiN[\wC]{\delta n}}] \geq 1 - \alpha.
  \end{align}
\end{proposition}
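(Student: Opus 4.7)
The plan is to prove \eqref{eq:adapt:lb} by an Assouad-style construction based on a collection of $N$ hypercubes, one per regularity class $\mwC{m}$, followed by a $\chi^2$-control of the corresponding mixture against the null. For each $m \in \nset{N}$ I associate to every vertex $\vertex \in \{-1,+1\}^{\mDi}$ the real density
\begin{equation*}
	\xdf_{m,\vertex} := \xdfO + \sum_{j=1}^{\mDi} \vertex_j\, \rho_{m,j}\, \big(\expb + \expb[-j]\big),
\end{equation*}
with real amplitudes $\rho_{m,j}$ of the order $(\mwC[j]{m}) \wedge \big((\delta n)^{1/2}|\fedf[j]|\big)^{-1}$, suitably rescaled by $A$. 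This is the point realising the bias--variance trade-off for $\mwC{m}$, and condition \ref{(C4)} ensures that at the critical index $j=\mDi$ the two terms are comparable up to the factor $\eta$, yielding $\Vnorm[{\Lp[2]}]{\xdf_{m,\vertex}-\xdfO}^2 \geq A^2(\mtRi)^2$. Condition \ref{(C3)} delivers the regularity bound $\xdf_{m,\vertex} - \xdfO \in \mc E_{\mwC{m}}^{\rdclass}$, and smallness of the amplitudes, guaranteed by $A \leq \uA_\alpha \leq \eta^{1/2}\rdclass$, makes every $\xdf_{m,\vertex}$ a genuine nonnegative density.

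The next step exploits that for $\xdfO = \mathds{1}_{[0,1)}$ one also has $\ydfO = \mathds{1}_{[0,1)}$ regardless of $\edf$, so the null $P_0 := \FuVg{\xdfO}^{\otimes n}$ is simply the uniform product law on $[0,1)^n$. Introducing the mixture
\begin{equation*}
	\bar P_N := \frac{1}{N}\sum_{m=1}^{N} 2^{-\mDi} \sum_{\vertex\in\{\pm 1\}^{\mDi}} \FuVg{\xdf_{m,\vertex}}^{\otimes n},
\end{equation*}
the standard two-point reduction---bounding the supremum over alternatives by the average---together with Cauchy--Schwarz yields
\begin{equation*}
	\inf_{\test} \sup_{\wC \in \altcollect}\trisk[\test \mid \rwC,A\tRiN[\wC]{\delta n}] \;\geq\; 1 - \tfrac{1}{2} \sqrt{\chi^2(\bar P_N , P_0)}.
\end{equation*}
It therefore suffices to establish $\chi^2(\bar P_N, P_0) \leq 4\alpha^2$ whenever $A \leq \uA_\alpha$.

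This $\chi^2$-control is the core of the proof and the main obstacle; it is exactly what \cref{adapt:chi2} in the appendix is designed to deliver. Applied to the present mixture, it simultaneously bounds the diagonal contribution of each hypercube and the cross contributions from pairs of hypercubes of different sizes $\mDi$ and $\mDi[l]$. The monotonicity hypothesis \ref{(C1)} is what makes these cross-terms geometrically summable, while \ref{(C2)}, forcing $N \geq \alpha^{-2}\exp(c_\alpha \delta^{-2})$, precisely absorbs the additional $\log N \asymp \delta^{-2}$ factor produced by averaging over $N$ regularity classes. The delicate bookkeeping remaining is to verify that the four competing upper bounds entering $\uA_\alpha^2$---the factor $\rdclass^2$ for regularity and nonnegativity, $\ssconst^{-1}$ for the $\ell^2$-norm control of $\mwC{m}$ via \ref{(C3)}, and $\sqrt{\log(1+\alpha^2)} \wedge \sqrt{c_\alpha}$ arising from the $\chi^2$ bound---combine through \cref{adapt:chi2} to produce precisely $\chi^2(\bar P_N,P_0) \leq 4\alpha^2$, from which \eqref{eq:adapt:lb} follows.
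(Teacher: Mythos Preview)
Your high-level strategy matches the paper's exactly: reduce to a $\chi^2$-bound between the null and a mixture over $N$ hypercubes via \cref{adapt:chi2}, and read off the roles of \ref{(C1)}--\ref{(C4)}. The gap is in your choice of amplitudes. You propose $\rho_{m,j}$ of order $(\mwC[j]{m}) \wedge \big((\delta n)^{1/2}|\fedf[j]|\big)^{-1}$, but the paper instead takes
\[
\mPa[j]{m} \;=\; \uA_\alpha\,\mtRi\,\nu_{\mDi}^{-2}\,|\fedf[j]|^{-2},\qquad j\in\nset{\mDi},
\]
which does not involve $\mwC[j]{m}$ at all. This is not cosmetic. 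With the paper's choice the diagonal $\chi^2$-exponent is
\[
2n^2\sum_{j\in\nset{\mDi}}(\mPa[j]{m})^4|\fedf[j]|^4
= \uA_\alpha^4\,(\mtRi)^4\,\nu_{\mDi}^{-4}\,n^2
\leq \uA_\alpha^4\,\eta^{-2}\,\delta^{-2}
\leq c_\alpha\,\delta^{-2},
\]
which is \emph{independent of $\mDi$} and then feeds directly into \ref{(C2)}. With your min-choice one only has $\rho_{m,j}^2|\fedf[j]|^2\leq A^2(\delta n)^{-1}$, so the same diagonal exponent is bounded by $2A^4\delta^{-2}\mDi$, which grows with $\mDi$ and cannot be controlled uniformly over $m\in\nset{N}$. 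The cross-term bound via \ref{(C1)} runs into the same difficulty. In short, the specific shape $|\fedf[j]|^{-2}$ is the least-favourable prior that makes both the separation identity $2\Vnorm[{\lp[2](\Nz)}]{\mPa{m}}^2=\uA_\alpha^2(\mtRi)^2$ and the $\chi^2$-bound come out exactly, and your heuristic ``bias $\wedge$ standard deviation'' construction does not reproduce it.

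A smaller but related point: your reading of \ref{(C4)} is off. It compares $(\mwC[\mDi]{m})^2$ with $(\delta n)^{-1}\nu_{\mDi}^2$, not with $(\delta n)^{-1}|\fedf[\mDi]|^{-2}$; the two quantities differ in general (e.g.\ by a factor $k^{1/2}$ for polynomially decaying $|\fedf[j]|$), so \ref{(C4)} does not say that the two terms in your minimum are comparable at $j=\mDi$. Once you switch to the paper's amplitudes, \ref{(C4)} is used instead to bound $\Vnorm[{\lp[2](\Nz)}]{\mPa{m}/\mwC{m}}^2$ (regularity) and $(\mtRi)^4\nu_{\mDi}^{-4}n^2$ (diagonal $\chi^2$) simultaneously, exactly as your outline anticipates for the other conditions.
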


\begin{remark}[\textbf{Conditions of \cref{prop:adaptive:lb}}]
Let us briefly discuss the conditions of
\cref{prop:adaptive:lb}. Under \ref{(C1)}  the collection of
regularity parameters $\altcollect$ is rich enough to make a factor $\delta^{-1}$ for adaptation unavoidable, i.e. it contains distinguishable elements  resulting in
significantly different radii. \ref{(C2)} is a bound for the maximal
size of an unavoidable adaptive factor. \ref{(C3)} guarantees that
the candidates  constructed in the reduction scheme of the proof are
indeed densities. The condition \ref{(C4)} relates the  behaviour of
the sequences $\fedf$ and $\mwC{m}$, $m\in\nset{N}$, and essentially guarantees an
optimal balance of the bias and the variance term in the dimension
$\mDi$ uniformly over $m\in\nset{N}$. Moreover, for all regularity and
ill-posedness examples considered in \cref{ill} \ref{(C4)}  holds
uniformly for all $n \in \IN$. We shall  emphasise that the optimal
dimensions $\mDi$ and the corresponding radii $\mtRi$ are
determined in terms of the effective sample size $\delta\say$. 
\end{remark}

\begin{proof}[Proof of \cref{prop:adaptive:lb}]
  \textbf{Reduction Step.} 
  Let $\FuVg{1}:=N^{-1}\sum_{m\in\nset{N}}\FuVg{\mwC{m}}$ with mixture $\FuVg{\mwC{m}}$ over the alternative $\rmwC\cap \Lp[2]_{\uA_\alpha\mtRi}$
  to be
  specified below, and set $\FuVg{0} :=
  \FuVg{\xdfO}$. Introducing the $\chi^2$-divergence $\chisq(\FuVg{0},\FuVg{1})$
  the risk is lower bounded
  due to a classical reduction argument as follows
  \begin{multline*}
    \inf_{\test} \sup_{\wC \in \altcollect}	\trisk[\test \mid
    \rwC, \uA_\alpha\tRiN{\delta n}]\geq \inf_{\test} \max_{m\in\nset{N}}\lb \FuVg{0}(\test = 1) + \FuVg{\mwC{m}}(\test = 0) \rb \\
    \geq \inf_{\test} \lb \FuVg{0}(\test = 1) + \FuVg{1} (\test = 0) \rb 
    \geq 1 - \sqrt{\frac{\chisq(\FuVg{1},\FuVg{0})}{2}}.
  \end{multline*}
  The last inequality follows e.g. from Lemma 2.5 combined with (2.7) in \cite{Tsybakov2009}. \\
  \textbf{Definition of the mixture.} For $m\in\nset{N}$ define
  $\mPa{m}\in\lp[2](\Nz)$ with
  \begin{equation}\label{re:a:lb:bew:0}
    \mPa[j]{m} := \uA_\alpha \mtRi \nu_{\mDi}^{-2} |\fedf[j]|^{-2}\text{ for
      all }j\in\nset{\mDi}\quad\text{ and }\quad\mPa[j]{m} :=0\text{ otherwise.}
  \end{equation}
  We first
  collect elementary properties of the $\lp[2](\Nz)$-sequences $\mPa{l}$,
  $\mPa{l}/\mwC{l}:=\Nsuite{\mPa[j]{l}/\mwC[j]{l}}$ and\linebreak
  $\mPa{l}\mPa{m}|\fedf|^2:=\Nsuite{\mPa[j]{l}\mPa[j]{m}|\fedf[j]|^2}$. Exploiting
  \eqref{eta}  shows by direct
  calculations%
  \begin{multline}\label{re:a:lb:bew:1}
     2\Vnorm[{\lp[2](\Nz)}]{\mPa{m}}^2=\uA_\alpha^2(\mtRi)^2, \qquad2\Vnorm[{\lp[2](\Nz)}]{\mPa{m}/\mwC{l}}^2\leq
  \uA_\alpha^2\frac{(\mtRi)^2}{\nu_{\mDi}^{4}}(\mwC[\mDi]{m})^{-2}\nu_{\mDi}^{4}\leq
  \uA_\alpha^2\eta^{-1} \mbox{ and }\\
n^22\Vnorm[{\lp[2](\Nz)}]{\mPa{m}\mPa{l}|\fedf|^2}^2=  \uA_\alpha^4\frac{(\mtRi)^2 }{\nu_{\mDi}^{4}}\frac{(\mtRi[l])^2 }{\nu_{\mDi[l]}^{4}}n^2(\nu_{\mDi}^{4}\wedge\nu_{\mDi[l]}^{4}).
  \end{multline}
   As a consequence,
  since $2\Vnorm[{\lp[2](\Nz)}]{\mwC{m}}^2\leq\ssconst$ by \ref{(C3)} and the
  definition of $\uA_\alpha^2$ we conclude that
  \begin{equation}\label{re:a:lb:bew:2}
    4\Vnorm[{\lp[1](\Nz)}]{\mPa{l}}^2\leq2\Vnorm[{\lp[2](\Nz)}]{\mPa{l}/\mwC{l}}^22\Vnorm[{\lp[2](\Nz)}]{\mwC{l}}^2\leq
    \eta^{-1}\uA_\alpha^2\ssconst\leq 1
    \mbox{ and  }
    2\Vnorm[{\lp[2](\Nz)}]{\mPa{l}/\mwC{l}}^2\leq
    \eta^{-1}\uA_\alpha^2\leq \rC^2.
  \end{equation}
  Let $m < l$, then $\nu_{\mDi}^{4}\leq\nu_{\mDi[l]}^{4}$ by
  \ref{(C1)}. Due to \eqref{re:a:lb:bew:1} combined with \eqref{eta} and \ref{(C1)} we have%
  \begin{equation}\label{re:a:lb:bew:3a}
    n^22\Vnorm[{\lp[2](\Nz)}]{\mPa{m}\mPa{l}|\fedf|^2}^2=\uA_\alpha^4\frac{(\mtRi)^2
    }{\delta^2 (\mtRi[l])^2}\frac{(\mtRi[l])^4 }{(\delta n)^{-2}\nu_{\mDi[l]}^{4}}\leq \uA_\alpha^4\eta^{-2}\frac{(\mtRi)^2
    }{\delta^2 (\mtRi[l])^2}\leq \uA_\alpha^4\eta^{-2}\leq \log(1+ \alpha^2).
  \end{equation}
  Let $l = m$, then from \eqref{re:a:lb:bew:1} exploiting \eqref{eta} we
  obtain%
  \begin{equation}\label{re:a:lb:bew:3b}
    n^22\Vnorm[{\lp[2](\Nz)}]{\mPa{l}\mPa{m}|\fedf|^2}^2=\uA_\alpha^4\frac{(\mtRi)^2
    }{\delta^2 (\mtRi[m])^2}\frac{(\mtRi[m])^4 }{(\delta
      n)^{-2}\nu_{\mDi[m]}^{4}}\leq
    \uA_\alpha^4\eta^{-2}\delta^{-2}\leq c_{\alpha}\delta^{-2}.
  \end{equation}
  Combining \eqref{re:a:lb:bew:3a} and \eqref{re:a:lb:bew:3b} from
  \ref{(C2)}   we  conclude 
  \begin{equation}\label{re:a:lb:bew:4}
    \tfrac{1}{N^2}\sum_{l,m\in\nset{N}}\exp\big(n^22\Vnorm[{\lp[2]}]{\mPa{l}\mPa{m}|\fedf|^2}^2\big)
    \leq \tfrac{1}{N}\exp\big(c_{\alpha}\delta^{-2}\big) +\tfrac{(N-1)}{N}
    (1+\alpha^2)\leq 2\alpha^2+1.
  \end{equation}  
  For each $m\in\nset{N}$, $\mPa{m}$ as in \eqref{re:a:lb:bew:0} and
  $\signv \in \{\pm\}^{\mDi}$, define a density
  \begin{equation*}
    \mxdf:=\expb[0]+\sum_{|j|\in\nset{\mDi}}\signv[|j|]\mPa[|j|]{m}\expb\in\cD.
  \end{equation*}
  Indeed, by construction $\mxdf$ belongs to $\Lp[2]$, integrates to
  one, is real-valued and positive, since
  $2\Vnorm[{\lp[1]}]{\mPa{m}}\leq1$ by \eqref{re:a:lb:bew:2}. Moreover,
  we have
  \begin{equation*}
    \mxdf-\xdfO=\sum_{|j|\in\nset{\mDi}}\signv[|j|]\mPa[|j|]{m}\expb\in\rmwC\cap \Lp[2]_{\uA_\alpha\mtRi}
  \end{equation*}
  exploiting
  $2\sum_{j\in\nset{\mDi}}(\mwC[j]{m})^{-2}|\signv[j]|^2|\mPa[j]{m}|^2=2\Vnorm[{\lp[2](\Nz)}]{\mPa{m}/\mwC{m}}^2\leq\rC^2$
  by \eqref{re:a:lb:bew:2} and
  $\Vnorm[{\Lp[2]}]{\mxdf-\xdfO}^2= 2\Vnorm[{\lp[2](\Nz)}]{\mPa{m}}^2=\uA_\alpha^2(\mtRi)^2$
  due to \eqref{re:a:lb:bew:1}. As a consequence
  $\FuVg{\mwC{m}}:= 2^{-\mDi} \sum_{\signv \in
    \{\pm \}^{\mDi}} \FuVg{\mxdf}$ is a  mixture on the
  alternative.\\ 
	 \textbf{Bound for the $\chisq$-divergence.} From \eqref{re:a:lb:bew:4}
	by applying \cref{adapt:chi2}  we   obtain 
	\begin{align*}
		\chisq(\FuVg{0},\FuVg{1}) \leq \frac{1}{N^2} \sum_{l,m
          \in\nset{N}}\exp\big(n^22\Vnorm[{\lp[2](\Nz)}]{\mPa{l}\mPa{m}|\fedf|^2}^2\big)
          - 1 \leq 2\alpha^2. 
	\end{align*}
        Combining this inequality with the reduction step yields the assertion \eqref{eq:adapt:lb}.
\end{proof}
\paragraph{Adaptive lower bounds in specific situations}
In the sequel we apply \cref{prop:adaptive:lb} to two specific classes of alternatives $\{\rwC : \wC \in \altcollect \}$. 
 We consider a set $\altcollect$ which is  non-trivial with respect to
 either
polynomial or exponential decay, that is, 
 $	\{(j^{-\sPara})_{j \in \IN} : \sPara \in
 [\sPara_\star,\sPara^\star]\}\subset\altcollect$ or $	\{(\exp(-j^{\sPara}))_{j \in \IN} :
\sPara \in[\sPara_\star,\sPara^\star]\}\subset\altcollect$
 for
$\sPara_\star<\sPara^\star$ and  $\sPara_\star,\sPara^\star >0$.

\begin{theorem}[\textbf{Adaptive factor for ordinary smoothness and mild ill-posedness}]
	\label{adapt:fact:o:m}
        Let $\altcollect$ be non-trivial with respect
        to polynomial decay for $1/2<\sPara_\star<\sPara^\star$. Let $\lv \epsden[j] \rv \sim
        j ^{-\pPara}$ for $p > 1/2$. For $\alpha \in (0,1)$ there exists an
        $n_\circ \in \IN$ and $\uA_\alpha \in (0, \infty)$ such that for all $n \geq n_\circ$ and $A \in [0,\uA_\alpha]$
	\begin{align*}
		  \inf_{\test} \sup_{\wC \in \altcollect}	\trisk[ \test \mid
		\rwC, A {\tRiN[\wC]{\delta n}}] \geq 1 - \alpha
	\end{align*}
	with $\delta = (\log \log n \vee 1 )^{-1/2}$, i.e. $\delta^{-1}$ is a lower bound for the minimal adaptive factor over $\altcollect$.
\end{theorem}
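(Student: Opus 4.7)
The plan is to invoke \cref{prop:adaptive:lb} for a well-chosen finite collection $\{\mwC{m}\}_{m=1}^N\subset\altcollect$ drawn from the non-trivial polynomial subclass $\{(j^{-s})_{j\in\Nz}:s\in[s_\star,s^\star]\}$. Since $\delta^{-2}=\log\log n\vee 1$, condition \ref{(C2)} reduces, for $n\geq e^e$, to the polynomial-in-$\log n$ requirement $N\geq\alpha^{-2}(\log n)^{c_\alpha}$, while \ref{(C1)} will turn into a logarithmic spacing condition on the smoothness indices; \ref{(C3)} and \ref{(C4)} follow from the bias--variance trade-off orders already listed in \cref{ill}.

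Concretely, fix $c_\alpha\in(0,1)$ (say $c_\alpha=1/2$) and, for each $n$ large, take a strictly decreasing grid $s^\star\geq s_1>\cdots>s_N\geq s_\star$ with uniform spacing $s_m-s_{m+1}\asymp(s^\star-s_\star)/(N-1)$ and set $\mwC{m}:=(j^{-s_m})_{j\in\Nz}$. Since $\nu_k^2\asymp k^{2p+1/2}$, balancing $\mDi^{-2s_m}\asymp(\delta n)^{-1}\mDi^{2p+1/2}$ yields $\mDi\asymp(\delta n)^{2/(4p+4s_m+1)}$ and $\mtRi\asymp(\delta n)^{-2s_m/(4p+4s_m+1)}$. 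Both depend on $m$ only through $s_m$ via smooth strictly monotone maps whose derivatives are uniformly bounded away from $0$ and $\infty$ on $[s_\star,s^\star]$. Condition \ref{(C3)} is immediate with $\ssconst:=2\zeta(2s_\star)<\infty$ since $s_\star>1/2$, and \ref{(C4)} holds with some $\eta\in(0,1]$ independent of $n$ and $m$ because $(\mwC[\mDi]{m})^2\asymp\mDi^{-2s_m}$ and $(\delta n)^{-1}\nu_{\mDi}^2\asymp(\delta n)^{-1}\mDi^{2p+1/2}$ share the common order $(\delta n)^{-4s_m/(4p+4s_m+1)}$ uniformly for $s_m\in[s_\star,s^\star]$.

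For \ref{(C1)}, $m<l$ forces $s_m>s_l$, hence $\mDi\leq\mDi[l]$ and $\mtRi\leq\mtRi[l]$. Using the smooth map $\tilde f(s):=2s/(4p+4s+1)$ we further have
\[
\mtRi/\mtRi[l]\asymp(\delta n)^{-(\tilde f(s_m)-\tilde f(s_l))}\leq\delta
\]
as soon as $\tilde f(s_m)-\tilde f(s_l)\geq \log(\delta^{-1})/\log(\delta n)\sim\tfrac12\log\log\log n/\log n$. Since $\tilde f'$ is bounded below by a positive constant on $[s_\star,s^\star]$, the mean value theorem reduces this to a spacing $s_m-s_l\gtrsim\log\log\log n/\log n$, permitting cardinalities up to $N\asymp\log n/\log\log\log n$. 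For $c_\alpha=1/2$ and all $n\geq n_\circ=n_\circ(\alpha,s_\star,s^\star,p)$ sufficiently large we then have $\alpha^{-2}(\log n)^{1/2}\leq N\asymp\log n/\log\log\log n$, so \ref{(C1)}--\ref{(C4)} hold simultaneously with the same $N$, and \cref{prop:adaptive:lb} delivers the desired lower bound with $\uA_\alpha^2=\eta\bigl(\rdclass^2\wedge\sqrt{\log(1+\alpha^2)}\wedge\ssconst^{-1}\wedge 2^{-1/2}\bigr)>0$.

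The main technical obstacle is the simultaneous compatibility of the two logarithmic constraints on $N$: condition \ref{(C2)} forces $N$ to exceed a positive power of $\log n$, whereas the fixed window $[s_\star,s^\star]$ combined with the separation requirement in \ref{(C1)} caps $N$ at roughly $\log n/\log\log\log n$. This tension is resolvable only because the constant $c_\alpha$ appears as a free parameter in \cref{prop:adaptive:lb} and can be picked strictly below $1$. A secondary subtlety is to guarantee that the $\asymp$-constants hidden in $\mDi$, $\mtRi$ and $\nu_{\mDi}$ are uniform in $m\in\nset{N}$ and in $n$; this uniformity is automatic thanks to the compactness of $[s_\star,s^\star]$ and the smoothness of the maps $s\mapsto\mDi(s),\mtRi(s)$ induced by the bias--variance balance.
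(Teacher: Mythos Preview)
Your proposal is correct and follows essentially the same route as the paper: both invoke \cref{prop:adaptive:lb} with a finite collection of polynomial regularity sequences indexed by smoothness parameters in $[s_\star,s^\star]$, choose $c_\alpha=1/2$, and arrive at a grid of cardinality $N\asymp\log n/\log\log\log n$, which is large enough for \ref{(C2)} (since $(\log n)^{1/2}\ll N$) yet compatible with the $\delta$-separation in \ref{(C1)}. The only cosmetic difference is that the paper places a uniform grid on the exponent space $e(s)=4s/(4s+4p+1)$ and then pulls back to $s$, whereas you grid uniformly in $s$ and push forward via the mean value theorem for $\tilde f$; since $e$ (equivalently $\tilde f$) is bi-Lipschitz on the compact interval $[s_\star,s^\star]$, the two parametrisations are interchangeable up to uniform constants and yield the same order for the spacing and for $N$.
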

\begin{proof}[Proof of	\cref{adapt:fact:o:m}]
	We intend to apply \cref{prop:adaptive:lb}. To do so, we
        construct a collection of regularity parameters $\altcollect_{N}:=\{ \mwC{m}\in \altcollect, m \in\nset{N} \}\subset\altcollect$, that satisfies \ref{(C1)}-\ref{(C4)}. \\
        \textbf{Definition of the collection.} 
        By \cref{ill} the minimax radius of testing is of order $\tSRi\sim n^{-e(s)}$ with
        exponent $e(s) := \frac{4 \sPara}{4\sPara + 4\pPara + 1}$  for $s\in[\sPara_\star,\sPara^\star]$.  Since $\altcollect$ is non-trivial with respect to polynomial decay, it contains the corresponding regularity sequences to the exponents in the interval $[e_\star,e^\star] := [e(s_\star), e(s^\star)]$. We define a grid of size $N$ (specified below) on $\mc A$ by placing a linear grid on the interval of exponents. Indeed, for $d:= \frac{e^\star - e_\star}{N}$ we define $\altcollect_{N} := \lcb  \mwC{m} := (j^{-{s_m}})_{j \in \IN} : e(s_m) = e^\star - (m-1)d, m \in\nset{N} \rcb \subseteq \mc A$. \\
        \textbf{Verification of the conditions \ref{(C1)}-\ref{(C4)}.} We define $N :=\floor{
        	\frac{e^\star - e_\star}{4} \frac{\log(\delta n)}{\lv
        		\log(\delta)\rv}}$. Tedious but straight-forward calculations show that \ref{(C1)} is satisfied for $n$ large enough. Moreover, it is easily seen that $\delta^2 \log N \lra 1$ for $n \to \infty$. Hence, $\log N - \delta/2 \lra \infty$ and, thus, also \ref{(C2)} is satisfied for $n$ large enough and $c_\alpha := 1/2$. Concerning \ref{(C3)} we observe that $\sup_{m \in \nset{N}} \sum_{j \in \IN} (\mwC[j]{m})^2 \leq \sup_{s \in [s_\star, s^\star]} \sum_{j \in \IN} j^{-2s} \leq \frac{1}{2s_\star - 1} =: \ssconst /2$. Again, for $n$ large enough the existence of a constant $\eta$ satisfying \ref{(C4)} uniformly over $n$ follows, because for $\mwC{} \sim (j^{-s})_{j \in \IN}$ with $s \in [s_\star, s^\star]$ the terms $\wC[\tDi]^2$ and $     	(\delta n)^{-1}\nu_{\tDi}^2$ are of the same order.      
        	\end{proof}

\begin{theorem}[\textbf{Adaptive factor for super smoothness and mild ill-posedness}]
	\label{adapt:fact:s:m}
	Let $\altcollect$ be non-trivial with respect
	to exponential decay for $0<\sPara_\star<\sPara^\star$. Let $\lv \epsden[j] \rv \sim
	j ^{-\pPara}$ for $p > 1/2$. For $\alpha \in (0,1)$ there exists an
	$n_\circ \in \IN$ and $\uA_\alpha \in (0, \infty)$ such that for all $n \geq n_\circ$ and $A \in [0,\uA_\alpha]$
	\begin{align*}
		\inf_{\test} \sup_{\wC \in \altcollect}	\trisk[ \test \mid
		\rwC, A {\tRiN[\wC]{\delta n}}] \geq 1 - \alpha
	\end{align*}
	with $\delta = (\log \log \log n \vee 1 )^{-1/2}$, i.e. $\delta^{-1}$ is a lower bound for the minimal adaptive factor over $\altcollect$.
\end{theorem}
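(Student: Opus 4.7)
The plan is to mirror the proof of \cref{adapt:fact:o:m} and apply \cref{prop:adaptive:lb} to a finite subfamily $\altcollect_N\subseteq\altcollect$ of super smooth regularity sequences. Since $\altcollect$ is non-trivial with respect to exponential decay it contains the one-parameter family $\{(\exp(-j^s))_{j\in\IN}:s\in[\sPara_\star,\sPara^\star]\}$. By \cref{ill}, for $\wC=(\exp(-j^s))_{j\in\IN}$ and $|\fedf[j]|\sim j^{-\pPara}$ the radius at effective sample size $\delta n$ is of order $(\delta n)^{-1}(\log(\delta n))^{(2\pPara+1/2)/s}$ with optimal dimension of order $(\log(\delta n))^{1/s}$. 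The key observation is that, unlike in the ordinary smooth case where the exponent of $n$ itself varies with $s$, here the $n$-exponent is frozen at $-1$ and only the $\log$-exponent depends linearly on the reparameterisation $t:=1/s$. This dictates placing a linear grid on $t$ rather than on an analogue of $e(s)$.

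I would set $c:=2\pPara+1/2$ and spacing $d:=\log(1/\delta)/(c\log\log(\delta n))$, define $t_m:=1/\sPara^\star+(m-1)d$, $s_m:=1/t_m$ and $\mwC{m}:=(\exp(-j^{s_m}))_{j\in\IN}$ for $m\in\nset{N}$, where $N$ is the largest integer with $t_N\leq 1/\sPara_\star$. By construction the consecutive radii satisfy $\mtRi[m+1]/\mtRi=\exp(cd\log\log(\delta n))=1/\delta$, while $N\sim 2c(1/\sPara_\star-1/\sPara^\star)\log\log(\delta n)/\log\log\log n\sim \log\log n/\log\log\log n$, the last equivalence holding because $\delta$ decays very slowly.

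It then remains to verify the four conditions of \cref{prop:adaptive:lb}. Since $s_m$ is strictly decreasing in $m$, $\mDi\sim(\log(\delta n))^{1/s_m}$ is non-decreasing, and telescoping yields $\mtRi\leq\delta\mtRi[l]$ whenever $m<l$, which gives (C1). For (C2), with $\delta=(\log\log\log n)^{-1/2}$ one verifies $\delta^2\log N\to 1$, so that $\exp(c_\alpha/\delta^2)=(\log\log n)^{c_\alpha}$ is eventually dominated by $N\alpha^2$ for any $c_\alpha<1$. Condition (C3) is immediate as $2\sum_{j\in\IN}\exp(-2j^{s_m})\leq 2\sum_{j\in\IN}\exp(-2j^{\sPara_\star})=:\ssconst$ uniformly in $m$. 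Finally, (C4) follows because at the optimal dimension $\mDi$ the bias $(\mwC[\mDi]{m})^2\asymp\exp(-2\mDi^{s_m})$ and the variance $(\delta n)^{-1}\nu_{\mDi}^2\asymp(\delta n)^{-1}\mDi^{2\pPara+1/2}$ are, by the defining balance relation for $\mDi$, of the same order, producing a uniformly positive $\eta$.

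The main obstacle is the simultaneous balancing in the grid: the spacing $d$ must be small enough to enforce the factor-$\delta^{-1}$ separation of consecutive radii demanded by (C1), yet $N\sim d^{-1}$ must remain large enough to satisfy the packing condition (C2) at the triple-log scale. The delicate calibration step is controlling $\log\log(\delta n)$ against $\log\log n$; because $\delta$ decays only like $(\log\log\log n)^{-1/2}$ these two quantities are asymptotically equivalent, and this is precisely what makes the choice $c_\alpha<1$ admissible. Once these verifications are in place, \cref{prop:adaptive:lb} applied with the above constants directly yields the claimed lower bound for all $n\geq n_\circ$ and $A\in[0,\uA_\alpha]$.
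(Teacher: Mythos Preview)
Your approach is essentially the paper's: both place a linear grid on the exponent $e(s)=(2p+1/2)/s$ (your reparametrisation $t=1/s$ is the same up to the constant $c$), choose $N$ of order $\log\log(\delta n)/|\log\delta|$, and verify (C1)--(C4) in the same way, including the key observation $\delta^2\log N\to 1$. The one place where your construction is tighter than the paper's is the grid spacing: you take $d$ so that consecutive radii differ by a factor asymptotically equal to $1/\delta$, whereas the paper sets $N=\lfloor\tfrac{e^\star-e_\star}{4}\tfrac{\log\log(\delta n)}{|\log\delta|}\rfloor$, making consecutive radii differ by roughly $\delta^{-4}$; that extra slack is what absorbs the multiplicative constants hidden in the ``$\sim$'' relations when checking (C1), so you should widen your spacing by a fixed factor to make that step rigorous.
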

	
\begin{proof}[Proof of	\cref{adapt:fact:s:m}]
	We intend to apply \cref{prop:adaptive:lb}. To do so, we
	construct a collection of regularity parameters $\altcollect_{N}:=\{ \mwC{m}\in \altcollect, m \in\nset{N} \}\subset\altcollect$, that satisfies \ref{(C1)}-\ref{(C4)}. \\
	\textbf{Definition of the collection.} 
	By \cref{ill} the minimax radius of testing is of order $\tSRi\sim n^{-1} (\log n)^{e(s)}$ with
	exponent $e(s) := \frac{2 \pPara + 1/2}{\sPara}$  for $s\in[\sPara_\star,\sPara^\star]$.  Since $\altcollect$ is non-trivial with respect to exponential decay, it contains the corresponding regularity sequences to the exponents in the interval $[e_\star,e^\star] := [e(s^\star), e(s_\star)]$. We define a grid of size $N$ (specified below) on $\mc A$ by placing a linear grid on the interval of exponents. Indeed, for $d:= \frac{e^\star - e_\star}{N}$ we define $\altcollect_{N} := \lcb  \mwC{m} := (e^{-j^{s_m}})_{j \in \IN} : e(s_m) = e_\star + (m-1)d, m \in\nset{N} \rcb \subseteq \mc A$. \\
	\textbf{Verification of the conditions \ref{(C1)}-\ref{(C4)}.} We define $N :=\floor{
		\frac{e^\star - e_\star}{4} \frac{\log\log(\delta n)}{\lv
			\log(\delta)\rv}}$. Tedious but straight-forward calculations show that \ref{(C1)} is satisfied for $n$ large enough. Moreover, it is easily seen that $\delta^2 \log N \lra 1$ for $n \to \infty$. Hence, $\log N - \delta/2 \lra \infty$ and, thus, also \ref{(C2)} is satisfied for $n$ large enough and $c_\alpha := 1/2$. Concerning \ref{(C3)} we observe that $\sup_{m \in \nset{N}} \sum_{j \in \IN} (\mwC[j]{m})^2 \leq \sup_{s \in [s_\star, s^\star]} \sum_{j \in \IN} e^{-2s} \leq (1/2)^{1/s_\star} \Gamma(1/s_\star +1) =: \ssconst /2$, where $\Gamma$ denotes the Gamma-function. Again, for $n$ large enough the existence of a constant $\eta$ satisfying \ref{(C4)} uniformly over $n$ follows, because for $\mwC{} \sim (e^{-j^s})_{j \in \IN}$ with $s \in [s_\star, s^\star]$ the terms $\wC[\tDi]^2$ and $     	(\delta n)^{-1}\nu_{\tDi}^2$ are of the same order.      
\end{proof}

	Comparing \cref{adapt:fact:o:m} and \cref{adapt:fact:s:m} with \cref{ill:adapt} (for the indirect test) and \cref{d:ill:adapt} (for the direct test) shows that the adaptive factors are minimal. Indeed, in the ordinary smooth -- mildly ill-posed model both the direct and the indirect $\max$-test face a deterioration by a $\log \log n$-factor, which \cref{adapt:fact:o:m} shows to be unavoidable. In the more restrictive setting of super smoothness and mild ill-posedness both tests feature a $\log \log \log n$-factor, which is unavoidable due to \cref{adapt:fact:s:m}. In the ordinary smooth -- severely ill-posed model there is no loss for adaptation visible in the testing radius.


%
\appendix
\section{Appendix: auxiliary results used in \cref{sec:2,sec:4}}
\label{proofs1}
The next two assertions,  a concentration inequality for canonical
U-statistics and a Bernstein inequality, provide  our
key arguments in order to control the deviation of the test
statistics. The first assertion is a reformulation of Theorem 3.4.8 in
 \cite{GineNickl2015}.
\begin{proposition}
  \label{simplified:con}
  Let $\{Y_j\}_{j=1}^n$ be independent and identically distributed
  $[0,1)$-valued random variables, and let $h: [0,1)^2 \to\IR$ be a
  bounded symmetric kernel, i.e. $h(y,\tilde{y}) = h(\tilde y, y)$ for
  all $y, \tilde y\in[0,1)$, fulfilling in addition
  \begin{align}
    \label{canonical}
    \IE\lb h(Y_1, y_2) \rb = 0 \qquad \forall\,  y_2 \in [0,1).
  \end{align} 
  Then there are finite constants $\uSA,\uSB,\uSC$ and $\uSD$ such that
    \begin{multline}\label{U:const}
    \sup_{y_1,y_2\in[0,1)}|h(y_1,y_2)|\leq \uSA,\quad
    \sup_{y_2\in[0,1)}\IE h^2(Y_1,y_2)\leq \uSB^2,\quad
    \IE h^2(Y_1,Y_2)\leq \uSC^2 \quad\text{and} \\
    \sup \lcb \IE \big( h(Y_1, Y_2) \He(Y_1) \HeB(Y_2)\big) , \IE
    \He^2(Y_1) \leq 1, \IE \HeB^2(Y_2)  \leq 1 \rcb\leq \uSD
  \end{multline}
  and for all $n \geq 2$ the real-valued canonical U-statistic
  \begin{align*}
    \uS = \frac{1}{n(n-1)} \sum_{l,m\in\nset{n}\atop l\ne m} h(Y_l, Y_m) 
  \end{align*}
  satisfies  for all  $x \geq 0$ 
  \begin{align*}
    \IP\lb \uS  \geq  8 \uSC n^{-1} x^{1/2} + 13 \uSD n^{-1}  x
    + 261 \uSB n^{-3/2} x^{3/2} + 343\uSA n^{-2} x^2 \rb \leq \exp(1-x).
  \end{align*}
\end{proposition}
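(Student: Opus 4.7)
The plan is to deduce the statement as a straightforward reformulation of Theorem~3.4.8 of \cite{GineNickl2015}, which gives a four-term deviation inequality for degenerate (canonical) U-statistics of order $2$ whose kernel is bounded and square-integrable. The hypotheses \eqref{canonical} and \eqref{U:const} are precisely the standing assumptions of that theorem, so the task reduces to two bookkeeping steps: identifying Giné--Nickl's variance parameters with $\uSA,\uSB,\uSC,\uSD$ and rescaling from the unnormalised sum $\sum_{l\ne m} h(Y_l,Y_m)$ to the U-statistic $\uS$.

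First, write $n(n-1)\uS=\sum_{l\ne m}h(Y_l,Y_m)$ and apply Giné--Nickl's theorem, which furnishes an inequality of the form
\begin{align*}
\IP\Big(\sum_{l\ne m}h(Y_l,Y_m)\geq \kappa_1 \Sigma \sqrt{x}+\kappa_2 \Gamma x+\kappa_3 \Lambda x^{3/2}+\kappa_4 M x^2\Big)\leq \kappa_0 e^{-x},
\end{align*}
where $\Sigma^2$ majorises $n(n-1)\IE h^2(Y_1,Y_2)$, $\Gamma$ majorises the bilinear form $\sup\{\IE h(Y_1,Y_2)\He(Y_1)\HeB(Y_2):\IE \He^2,\IE \HeB^2\le1\}$ multiplied by $n$, $\Lambda$ majorises $\sqrt{n}\sup_{y_2}(\IE h^2(Y_1,y_2))^{1/2}$, and $M$ majorises $\|h\|_\infty$. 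Under \eqref{U:const} one may take $\Sigma=n\uSC$, $\Gamma=n\uSD$, $\Lambda=\sqrt{n}\,\uSB$ and $M=\uSA$. Dividing through by $n(n-1)\ge n^2/2$ for $n\ge2$ converts the four error terms into $n^{-1}$, $n^{-1}$, $n^{-3/2}$ and $n^{-2}$ times $\uSC,\uSD,\uSB,\uSA$ respectively.

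Second, I would adjust the constants. The qualitative bound is immediate; the specific coefficients $8,13,261,343$ and the factor $e^{1-x}$ instead of $e^{-x}$ are obtained by substituting the explicit numerical constants in Theorem~3.4.8 (which in turn come from Adamczak's decomposition together with the Klein--Rio refinement of Talagrand's inequality), absorbing the factor $n^2/(n(n-1))\le 2$ for $n\ge 2$ into them, and replacing the variable $x$ by $x-1$ (or, equivalently, writing $\kappa_0 e^{-x}=\exp(1-x)$ provided $\kappa_0\le e$) which is harmless since the bound is only asserted for $x\ge 0$.

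The principal obstacle is simply this constant-tracking: Theorem~3.4.8 is usually quoted with unspecified absolute constants, so to obtain the stated numerical values one must revisit the four applications of Talagrand--type inequalities in its proof and choose parameters in the classical Cauchy--Schwarz splits tightly enough that the sums $\kappa_i\cdot n^2/(n(n-1))$ for $i\in\{1,2,3,4\}$ respect the declared values $8,13,261,343$. Once the constants are verified, no further argument is needed, as the four hypotheses in \eqref{U:const} match Giné--Nickl's parameters term by term.
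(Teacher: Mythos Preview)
Your proposal is correct and matches the paper's treatment exactly: the paper does not prove this proposition but simply states that it ``is a reformulation of Theorem 3.4.8 in \cite{GineNickl2015}.'' Your write-up in fact goes further than the paper by sketching the parameter identification and rescaling needed to pass from Gin\'e--Nickl's unnormalised sum to the normalised U-statistic $\uS$.
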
%
 The following version of Bernstein's
inequality can directly be deduced from Theorem 3.1.7 in \cite{GineNickl2015}. 
\begin{proposition}
	\label{bernstein}
	Let $\{Z_j\}_{j=1}^n$ be independent with $|Z_j|\leq \lSb$ almost
        surely and $\IE(|Z_j|^2) \leq \lSv$ for all $j\in\nset{n}$, then for all $x > 0$ and $n \geq 1$, we have
	\begin{align*}
		\IP\lb \frac{1}{n} \sum_{j\in\nset{n}}\lb Z_j - \IE Z_j \rb
          \geq \sqrt{\frac{2 \lSv x}{n}} + \frac{\lSb x}{3n} \rb \leq \exp(-x).
	\end{align*}
      \end{proposition}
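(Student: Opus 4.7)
The plan is to derive the bound by the classical Cramér--Chernoff method; as the author already remarks, the statement is also a direct specialisation of Theorem 3.1.7 in \cite{GineNickl2015}, so the quickest route is to verify its hypotheses and translate its conclusion. For a self-contained argument I would proceed in four standard steps.

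First, I set $S_n := \sum_{j\in\nset{n}}(Z_j - \IE Z_j)$ and, for $\lambda > 0$, apply Markov's inequality to $e^{\lambda S_n}$ and factorise the moment generating function by independence. Second, I bound each single-coordinate MGF using the usual Bennett--Bernstein lemma: expanding the exponential and exploiting $\IE|Z_j - \IE Z_j|^k \leq k!\, \lSb^{k-2}\lSv/2$ for $k \geq 2$ leads to
\begin{equation*}
\IE e^{\lambda(Z_j - \IE Z_j)} \leq \exp\!\left(\frac{\lambda^2 \lSv}{2(1 - \lambda \lSb/3)}\right), \qquad 0 < \lambda < 3/\lSb,
\end{equation*}
where the constant $1/3$ comes from summing the geometric tail of the Taylor expansion and is precisely what makes the Bernstein bound sharp.

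Third, combining the previous two steps and optimising the resulting exponent over $\lambda \in (0, 3/\lSb)$ yields the Bernstein tail in its Cramér form
\begin{equation*}
\IP\!\left(\frac{1}{n}\sum_{j\in\nset{n}} (Z_j - \IE Z_j) \geq t\right) \leq \exp\!\left(-\frac{n t^2}{2(\lSv + \lSb t/3)}\right), \qquad t > 0.
\end{equation*}
Fourth, given $x > 0$, I choose $t := \sqrt{2\lSv x/n} + \lSb x/(3n)$ and check, via an elementary quadratic computation together with the inequality $\sqrt{a+b} \leq \sqrt{a} + \sqrt{b}$, that the exponent on the right-hand side is then dominated by $-x$. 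No genuine obstacle arises at any stage; the only point requiring care is carrying the constant $1/3$ consistently through the MGF estimate, the optimisation in $\lambda$, and the final inversion, so that the variance term ends up with the sharp prefactor $\sqrt{2}$ and the boundedness term with exactly $1/3$.
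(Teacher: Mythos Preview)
The paper gives no proof here; it merely records that the statement follows from Theorem~3.1.7 in \cite{GineNickl2015}. Your self-contained Cram\'er--Chernoff outline is the right strategy, but as written it loses the constant $1/3$ twice and therefore does not establish the proposition with the stated threshold.

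In step~2, the moment bound $\IE|Z_j-\IE Z_j|^k\le \tfrac{k!}{2}\lSv\,\lSb^{k-2}$ plugged into the Taylor expansion yields only
\[
\IE e^{\lambda(Z_j-\IE Z_j)}\le 1+\tfrac{\lSv}{2}\sum_{k\ge2}\lambda^k\lSb^{k-2}=1+\frac{\lambda^2\lSv}{2(1-\lambda\lSb)},
\]
with denominator $1-\lambda\lSb$, not $1-\lambda\lSb/3$. The factor $1/3$ comes instead from Bennett's trick applied to the \emph{uncentered} variable $Z_j\le\lSb$: the monotonicity of $u\mapsto(e^u-1-u)/u^2$ gives $\IE e^{\lambda(Z_j-\IE Z_j)}\le\exp\bigl(\lSv(e^{\lambda\lSb}-1-\lambda\lSb)/\lSb^2\bigr)$, and only then does $k!\ge2\cdot3^{k-2}$ produce the $\lSb/3$. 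In step~4, even granting the correct MGF bound, the inversion of the Cram\'er form $\exp\bigl(-nt^2/(2\lSv+2\lSb t/3)\bigr)$ fails for $t=\sqrt{2\lSv x/n}+\lSb x/(3n)$: the quadratic $nt^{2}-\tfrac{2\lSb x}{3}t-2\lSv x$ equals $-n\bigl(\lSb x/(3n)\bigr)^{2}<0$, so the exponent is strictly below $x$; using $\sqrt{a+b}\le\sqrt a+\sqrt b$ on the positive root gives only the threshold $\sqrt{2\lSv x/n}+\tfrac{2\lSb x}{3n}$. Recovering the sharp $\lSb/3$ requires keeping Bennett's function $h(u)=(1+u)\log(1+u)-u$ and the inverse inequality $h^{-1}(x)\le\sqrt{2x}+x/3$, which is precisely what the cited theorem supplies.
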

\paragraph{Preliminaries.}We eventually calculate  first $\uSA$, $\uSB$ and
$\uSC$ satisfying \eqref{U:const} and exploit that $\uSD:=\uSC$ automatically also fulfils
\eqref{U:const}, which we briefly justify next. Throughout this
section we assume that $\{Y_j\}_{j=1}^n$ are
independent and identically distributed  with Lebesgue-density $\ydf=\xdf\oast\edf\in\Lp[2]$.  We denote by $\gLp$ the set of
(Borel-measurable) functions $\He:[0,1)\to\Rz$ with
$\Vnorm[{\gLp}]{\He}^2:=\int_{[0,1)}\He^2(x)g(x)dx<\infty$. Let  $h: [0,1)^2 \to\Rz$ be a bounded kernel,
i.e. $\Vnorm[{\Lp[\infty]}]{h}:=\sup_{y_1,y_2\in[0,1)}|h(y_1,y_2)|<\infty$,
and define the integral operator $H:\gLp\to\gLp$  with $\He\mapsto H\He$
and $H\He(s):=\int_{[0,1)}h(t,s)\He(t)g(t)dt$ for all
$s\in[0,1)$. Then $H$  is linear and 
 bounded, 
i.e. $\Vnorm[{\gLp\to\gLp}]{H}:=\sup\{\Vnorm[{\gLp}]{H\He}:\Vnorm[{\gLp}]{\He}\leq1\}\leq\IE
h^2(Y_1,Y_2)\leq C^2$ due to the Cauchy-Schwarz inequality. This shows
the claim since its operator norm  satisfies 
\begin{equation}
\Vnorm[{\gLp\to\gLp}]{H}=\sup \lcb \IE(  h(Y_1, Y_2)
\zeta(Y_1) \xi(Y_2)) , \IE \zeta^2(Y_1) \leq 1, \IE \xi^2(Y_2)  \leq 1
\rcb.\label{eq:Hnorm1}
\end{equation}
However, an additional assumption allows us to determine  a slightly different quantity $\uSD$.
For a symmetric kernel the operator $H$ is self-adjoint,
i.e. $\Vskalar[{\gLp}]{H\He,\HeB}=\Vskalar[{\gLp}]{\He,H\HeB}$
for all $\He,\HeB\in\gLp$ using the inner product
$\Vskalar[{\gLp}]{\He,\HeB}:=\int_{[0,1)}\He(s)\HeB(s)g(s)ds$, and we have
\begin{equation}
\Vnorm[{\gLp\to\gLp}]{H}=\sup\{|\Vskalar[{\gLp}]{H\He,\He}|:\Vnorm[{\gLp}]{\He}\leq1\}.\label{eq:Hnorm2}
\end{equation}
The last identity can further be reformulated in terms of a discrete
convolution, which we briefly recall next.
For $p\geq1$ we denote by $\lp[p]:=\lp[p](\Zz)$ the Banach space
of complex sequences over $\Zz$ endowed with
its usual $\lp[p]$-norm given by
$\Vnorm[{\lp[p]}]{\aS}:=\big(\sum_{j\in\Zz}|a_j|^p\big)^{1/p}$
for $\aS:=\Zsuite{a_j}\in\Cz^\Zz$. In the case $p=2$, $\lp[2]$ is a Hilbert
space and the $\lp[2]$-norm is
induced by the inner product
$\Vskalar[{\lp[2]}]{\aS,\bS}:=\sum_{j\in\Zz}a_j\overline{b}_j$
for all $\aS,\bS\in\lp[2]$. For each sequence
$\aS\in\lp[1]$  the discrete convolution operator $\aS\ast:
\lp[2]\to\lp[2]$ with  $\bS\mapsto \aS \ast \bS $ and $ (\aS\ast\bS)_j:=
  \sum_{l \in \IZ} a_{j-l} b_l$ for all $j\in\Zz$, is linear and bounded by
$\Vnorm[{\lp[1]}]{\aS}$,
i.e. $\Vnorm[{\lp[2]\to\lp[2]}]{\aS\ast}:=\sup\{\Vnorm[{\lp[2]}]{\aS\ast
  \bS}:\Vnorm[{\lp[2]}]{\bS}\leq1\}\leq
\Vnorm[{\lp[1]}]{\aS}$. Particularly, it holds%
\begin{equation}
\big|\sum_{j\in\Zz}\overline{b}_j\sum_{l\in\Zz}a_{j-l}b_l\big|=|\Vskalar[{\lp[2]}]{\aS
  \ast \bS,\bS}|\leq
\Vnorm[{\lp[1]}]{\aS}\Vnorm[{\lp[2]}]{\bS}^2\quad\text{for all }\aS\in\lp[1]\text{ and
}\bS\in\lp[2].\label{eq:bound:conv}
\end{equation}
Note that the adjoint  of
$\aS\ast$ is a discrete convolution operator
$\aS^\star\ast$ with $\aS[j]^\star:=\oaS[-j]$ for all $j\in\Zz$.
Hence, if in addition $\aS[j]=\oaS[-j]$ for all $j\in\Zz$, then $\aS\ast$ is self-adjoint.
Recall that the real density $\ydf\in\Lp[2]$ admits  Fourier
coefficients $\fydf=\Zsuite{\fydf[j]}$. The coefficients belong to both
$\lp[2]$ by Parseval's identity,
i.e. $\Vnorm[{\Lp[2]}]{\ydf}=\Vnorm[{\lp[2]}]{\fydf }$,
and also to  $\lp[1]$ due to the convolution theorem. Indeed,
$\Zsuite{\fydf[j]=\fxdf[j]\fedf[j]}$ with $\ydf=\xdf\ccon\edf$ and $\xdf,\edf\in\Lp[2]$ implies
$\Vnorm[{\lp[1]}]{\fydf}\leq\Vnorm[{\lp[2]}]{\fxdf}\Vnorm[{\lp[2]}]{\fedf}<\infty$
due to the
Cauchy-Schwarz inequality. Consequently, the discrete convolution 
$\fydf\ast:\lp[2]\to\lp[2]$ is linear, bounded and self-adjoint.
Moreover, for all $\He\in\Lp[2]$ with $|\He|\in\gLp$ and Fourier
coefficients $\fHe=\Zsuite{\fHe[j]}\in\lp[2]$ we note that
$\Vskalar[{\lp[2]}]{\fydf\ast\fHe,\fHe}=\sum_{j\in\Zz}\ofHe[j]\sum_{l\in\Zz}\fydf[j-l]\fHe[l]=\sum_{j\in\Zz}\ofHe[j]\sum_{l\in\Zz}\Ex(\expb[l](Y)\expb[j](-Y))\fHe[l]=\Ex|\He(Y)|^2=\Vnorm[{\gLp}]{\He}^2\geq0$.
Thereby, if for all $\He=\sum_{j\in\Zz}\fHe[j]
e_j\in\Lp[2]$ we  also have $|\He|\in\gLp$,  then $\fydf\ast$ is
non-negative. As a result, there is a non-negative
operator $(\fydf\ast)^{1/2}$ with
$\Vnorm[{\lp[2]}]{(\fydf\ast)^{1/2}\fHe}^2=\Vskalar[{\lp[2]}]{\fydf\ast\fHe,\fHe}=\Vnorm[{\gLp}]{\He}^2$
for all $\fHe\in\lp[2]$, 
which we use frequently in the proofs below.
\subsection{Auxiliary results used in the proof of \cref{quantiles}}
\begin{lemma}\label{prop:u}
Consider $\lcb \yOb[l] \rcb_{l=1}^n \iid \ydf\in\Lp[2]$
and for  $k\in\Nz$ the  kernel $h: [0,1)^2\to\Rz$ given by 
\begin{align*}
 h(y_1,y_2) = \sum_{|j|\in\nset{k}} \frac{(\expb[j](-y_1) - \fydf[j])(\expb[j](y_2) - \ofydf[j])}{|\fedf[j]|^2}, \qquad\forall\, y_1, y_2 \in [0,1),
\end{align*}
which is real-valued, bounded, symmetric and fulfils \eqref{canonical}. 
Let $\nu_k$ 
and
$m_k$ 
as in \eqref{nu} then 
\begin{equation}
  \label{ABCD}
  \uSA= 4\, \nu_k^4,\quad
  \uSB =
  { 3 \Vnorm[{\lp[2]}]{\fydf}\,\nu_k^3}
  \quad\text{and}\quad
 \uSD=\uSC=2\,\Vnorm[{\lp[2]}]{\fydf}\,\nu_k^2 
\end{equation}
satisfy the condition \eqref{U:const} in \cref{simplified:con}.
If, in addition, $\gLp=\RLp$, then
\begin{equation}\label{D:null}
  \uSD= 4 \,\Vnorm[{\lp[1]}]{\fydf}\, m_k^2 
	\end{equation}
 also satisfies the condition \eqref{U:const} in \cref{simplified:con}.
\end{lemma}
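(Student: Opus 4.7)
The plan is to verify the four estimates $\uSA,\uSB,\uSC,\uSD$ successively via Fourier analysis on the circle, after first checking the structural properties of $h$. Real-valuedness and symmetry in $(y_1,y_2)$ follow by pairing the summands indexed by $j$ and $-j$ and using $g_{-j}=\bar g_j$, $\varphi_{-j}=\bar\varphi_j$; the canonical condition $\mathbb{E}h(Y_1,y_2)=0$ reduces to $\mathbb{E}e_j(-Y_1)=g_j$. Throughout I rely on $|e_j(\pm y)|=1$ and $|g_j|\le\Vnorm[{\lp[1]}]{\fydf}=1$ (the last equality because $g$ is a density).

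For $\uSA$, each factor in the sum defining $h$ is bounded by $2$, so $|h|\le 4\sum_{|j|\in\nset{k}}|\varphi_j|^{-2}\le 4(2k)^{1/2}\nu_k^2$ by Cauchy-Schwarz, and $|\varphi_j|\le 1$ forces $\nu_k^4\ge 2k$, yielding the claimed $\uSA=4\nu_k^4$. For $\uSC$, set $\psi_j(y):=e_j(-y)-g_j$; independence of $Y_1$ and $Y_2$ gives
\begin{equation*}
\mathbb{E} h^2(Y_1,Y_2)=\sum_{j,l}\frac{|g_{j-l}-g_j\bar g_l|^2}{|\varphi_j|^2|\varphi_l|^2}.
\end{equation*}
Splitting $|g_{j-l}-g_j\bar g_l|^2\le 2|g_{j-l}|^2+2|g_jg_l|^2$ and applying Cauchy-Schwarz to each double sum together with Parseval $\Vnorm[{\lp[2]}]{\fydf}=\Vnorm[{\Lp[2]}]{\ydf}$ delivers the desired bound of order $\Vnorm[{\lp[2]}]{\fydf}^2\nu_k^4$. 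For $\uSB$, I fix $y_2$ and write $h(Y_1,y_2)=\sum_j b_j(y_2)(e_j(-Y_1)-g_j)$ with $b_j(y_2):=(e_j(y_2)-\bar g_j)/|\varphi_j|^2$; the same covariance computation gives $\mathbb{E} h^2(Y_1,y_2)\le\sum_{j,l}b_j\bar b_l g_{j-l}$, which I view as a quadratic form in the discrete convolution $\tilde g\ast$ on $\ell^2$ with $\tilde g_m:=g_{-m}$. Young's inequality $\Vnorm[{\lp[2]}]{\tilde g\ast b}\le\Vnorm[{\lp[2]}]{\fydf}\Vnorm[{\lp[1]}]{b}$, Cauchy-Schwarz, and the estimates $\Vnorm[{\lp[2]}]{b}\le 2\nu_k^2$, $\Vnorm[{\lp[1]}]{b}\le(2k)^{1/2}\Vnorm[{\lp[2]}]{b}$ together with $(2k)^{1/2}\le\nu_k^2$ then produce the claimed $\uSB$.

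The choice $\uSD=\uSC$ is immediate from \eqref{eq:Hnorm1}: the supremum appearing there is the operator norm on $\gLp$ of the integral operator with kernel $h$, dominated by its Hilbert-Schmidt norm, which equals $\uSC$. The refined bound \eqref{D:null} is the only step requiring $\gLp=\RLp$ and is the main technical hurdle. Under that hypothesis the kernel $h$ defines a self-adjoint, positive operator $H$ on $\gLp$ with the rank decomposition $H\xi=\sum_{|j|\in\nset{k}}|\varphi_j|^{-2}\Vskalar[{\gLp}]{\psi_j,\xi}\psi_j$, hence $\Vskalar[{\gLp}]{H\xi,\xi}\le m_k^2\sum_j|\Vskalar[{\gLp}]{\psi_j,\xi}|^2$. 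The decisive ingredient is then the Bessel-type inequality
\begin{equation*}
\sum_j|\Vskalar[{\gLp}]{\psi_j,\xi}|^2\le 4\Vnorm[{\lp[1]}]{\fydf}\Vnorm[{\gLp}]{\xi}^2,
\end{equation*}
which I would obtain by writing $\Vskalar[{\gLp}]{\psi_j,\xi}$ as a Fourier coefficient of $\xi g$ minus the centering term $g_j\mathbb{E}\xi(Y)$, bounding the main part via Parseval together with the Fourier-inversion inequality $\Vnorm[{\Lp[\infty]}]{g}\le\Vnorm[{\lp[1]}]{\fydf}$, and controlling the centering contribution through $|\mathbb{E}\xi(Y)|^2\le\Vnorm[{\gLp}]{\xi}^2$ combined with $\Vnorm[{\Lp[2]}]{g}^2\le\Vnorm[{\Lp[\infty]}]{g}\le\Vnorm[{\lp[1]}]{\fydf}$. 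Tracking the two contributions separately while keeping the constant at $4$ is the only delicate book-keeping.
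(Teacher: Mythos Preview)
Your proof is correct and follows essentially the same route as the paper: the bounds for $\uSA,\uSB,\uSC$ are obtained via the same Fourier/convolution calculus (your Young-inequality variant for $\uSB$ differs only cosmetically from the paper's $\ell^1$--$\ell^2$ convolution bound), and $\uSD=\uSC$ is immediate in both. One small slip: the equality $\Vnorm[{\lp[1]}]{\fydf}=1$ in your preamble is false (you are confusing $\int g=1$ with $\sum_j|g_j|$); what you actually need and use is $|g_j|\le 1$, which holds because $|g_j|\le\int g=1$, so the argument is unaffected.

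The one genuine methodological difference is in the refined bound \eqref{D:null}. The paper controls $\Vnorm[{\lp[2]}]{\fydf\ast\xi_\bullet}^2$ by passing through the square-root operator $(\fydf\ast)^{1/2}$ and using $\Vnorm[{\lp[2]\to\lp[2]}]{\fydf\ast}\le\Vnorm[{\lp[1]}]{\fydf}$. You instead invoke Parseval to rewrite $\sum_j|(\fydf\ast\xi_\bullet)_j|^2=\Vnorm[{\Lp[2]}]{\xi g}^2$ and then bound this by $\Vnorm[{\Lp[\infty]}]{g}\Vnorm[{\gLp}]{\xi}^2\le\Vnorm[{\lp[1]}]{\fydf}\Vnorm[{\gLp}]{\xi}^2$ via Fourier inversion. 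Your route is arguably more elementary (no positive square root needed) and yields the identical constant; the paper's operator-theoretic phrasing has the advantage of making the role of the hypothesis $\gLp=\RLp$ explicit through the non-negativity of $\fydf\ast$ established in the preliminaries.
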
 
\begin{proof}[Proof of \cref{prop:u}]   We first calculate  quantities
  $\uSA,\uSB$ and $\uSC$  satisfying \eqref{U:const}, then  by the above
  discussion  $\uSD=\uSC$ also satisfies \eqref{U:const}.  First, consider $\uSA$. From
  \begin{equation}\label{prop:u:bew1}
    \Vnorm[{\Lp[\infty]}]{(\expb[j]-\fydf[j])(\expb[-l]-\ofydf[l])}\leq4\quad\text{
      and } |\fedf[j]|\leq1\quad \text{for all }j,l\in\Zz
  \end{equation}
  we  immediately conclude that $ \Vnorm[{\Lp[\infty]}]{h}\leq 4
  \sum_{|j|\in\nset{k}} |\fedf[j]|^{-2}\leq 4\nu_k^4=\uSA$. Next,
  consider $\uSB$. Since $\Ex(\expb[j](-Y_1)\expb[l](Y_1))=\fydf[j-l]$
  for all $j,l\in\Zz$,  we deduce  for arbitrary  $y_2\in[0,1)$ that
  \begin{multline}\label{prop:u:bew2}
    \Ex\big( |h(Y_1,y_2)|^2\big)  = \Var\big(\sum_{|j|\in\nset{k}}\expb[j](-Y_1)
    \frac{(\expb[j](y_2) - \ofydf[j])}{|\fedf[j]|^2}
    \big)\leq \Ex \lv\sum_{|j|\in\nset{k}}\expb[j](-Y_1)
    \frac{(\expb[j](y_2) - \ofydf[j])}{|\fedf[j]|^2}\rv^2\\
    =   \sum_{|j|\in\nset{k}} \frac{(\expb[j](y_2) -
      \ofydf[j])}{|\fedf[j]|^2}\sum_{|l|\in\nset{k}}\fydf[j-l]
    \frac{(\expb[l](-y_2) - \fydf[l])}{|\fedf[l]|^2}
    = \Vskalar[{\lp[2]}]{\aS   \ast \bS,\bS}
  \end{multline}
  where  $\aS[l]:=\fydf[l]\Ind{\{|l|\in\nset{{ 2k}}\}}$  
  and $\bS[l]:=(\expb[l](-y_2) -
  \fydf[l])|\fedf[l]|^{-2}\Ind{\{|l|\in\nset{k}\}}$ for all
  $l\in\Zz$. Making use of \eqref{eq:bound:conv},
  \eqref{prop:u:bew1}, $\Vnorm[{\lp[2]}]{\fydf}\geq1$
  and $ (2k)^{1/2}\leq \nu_k^2$  it follows 
  \begin{equation*}
    \Vskalar[{\lp[2]}]{\aS
      \ast \bS,\bS} \leq \big( \sum_{|j|\in\nset{{2k}}} |\fydf[j]|\big)\sum_{|j|\in\nset{k}}
 \frac{|\expb[j](y_2) - \ofydf[j]|^2}{|\fedf[j]|^4}\leq (4k)^{1/2}
 \big(\sum_{|j|\in\nset{2k}} |\fydf[j]|^2\big)^{1/2}4\nu_k^4\leq  9\Vnorm[{\lp[2]}]{\fydf}^2\nu_k^6 =\uSB^2
  \end{equation*}
 Combining the last bound and \eqref{prop:u:bew2} we see that 
 $\sup_{y_2\in[0,1)}\Ex |h(Y_1,y_2)|^2\leq \uSB^2$. Next, consider
 $\uSC$. Since
 $\Ex(\expb[j](-Y_1)-\fydf[j])(\expb[l](Y_1)-\ofydf[l])=\fydf[j-l]-\fydf[j]\ofydf[l]$
 for all $j,l\in\Zz$,   applying  the
 Cauchy-Schwarz inequality  we obtain
  \begin{equation}\label{prop:u:bew3}
    \Ex |h(Y_1,Y_2)|^2  
    =
    \sum_{|j|\in\nset{k}}\frac{1}{|\fedf[j]|^2}\sum_{|l|\in\nset{k}}
    \frac{|\fydf[j-l]-\fydf[j]\ofydf[l]|^2}{|\fedf[l]|^2}\leq
    2\Vskalar[{\lp[2]}]{\aS   \ast \bS,\bS} + 2
    \nu_k^4\Vnorm[{\lp[2]}]{\fydf^2}^2
  \end{equation}
  where  $\aS[l]:=|\fydf[l]|^2\Ind{\{|l|\in\nset{{2k}}\}}$ and
  $\bS[l]:=|\fedf[l]|^{-2}\Ind{\{|l|\in\nset{k}\}}$ for all
  $l\in\Zz$. Moreover, from $\Vskalar[{\lp[2]}]{\aS   \ast \bS,\bS}\leq
  \Vnorm[{\lp[1]}]{\aS}\Vnorm[{\lp[2]}]{\bS}^2\leq
  \Vnorm[{\lp[2]}]{\fydf}^2\nu_k^4$ due to \eqref{eq:bound:conv} we
  conclude that \eqref{prop:u:bew3} and $|\fydf[j]|\leq1$, $j\in\Zz$,
  together imply  $\Ex |h(Y_1,Y_2)|^2\leq 4
  \nu_k^4\Vnorm[{\lp[2]}]{\fydf}^2=\uSC^2$. Finally, consider $\uSD$
  and assume in addition $\gLp=\RLp$ which allows us
  to use the identities \eqref{eq:Hnorm1} and \eqref{eq:Hnorm2}
   formulated in terms of an operator $H$.  Let $\He\in
 \gLp$, which implies $\He=\sum_{j\in\Zz}\fHe[j]
\expb[j]\in\Lp[2]$.  Exploiting
 $\Ex (\expb[j](-Y_1) - \fydf[j]) \He(Y_1)=(\fydf\ast \fHe)_j-\fydf[j]\Ex\He(Y_1)$ and
 $|\fydf[j]|\leq1$ for all $j\in\Zz$
 straightforward calculations show
 \begin{multline}\label{prop:u:bew4}
   |\Vskalar[{\gLp}]{H\He,\He}|=
   \sum_{|j|\in\nset{k}} \frac{1}{|\fedf[j]|^2} \lv\Ex\lb (\expb[j](-Y_1) - \fydf[j]) \He(Y_1) \rb  \rv^2
 \leq
  m_k^2 \sum_{|j|\in\nset{k}} \lv (\fydf\ast
  \He_{\mbullet})_j-\fydf[j]\Ex\He(Y_1) \rv^2
\\\leq 2 m_k^2 \big(\Vnorm[{\lp[2]}]{\fydf\ast
  \He_{\mbullet}}^2+
\Vnorm[{\lp[1]}]{\fydf}\Vnorm[{\gLp}]{\He}^2\big).
\end{multline}
Using the properties of the discrete convolution recalled above  it follows 
\begin{equation*}
\Vnorm[{\lp[2]}]{\fydf\ast
  \He_{\mbullet}}^2\leq \Vnorm[{\lp[2]\to\lp[2]}]{(\fydf\ast)^{1/2}}^2 \Vnorm[{\lp[2]}]{(\fydf\ast)^{1/2} \He_{\mbullet}}^2=\Vnorm[{\lp[2]\to\lp[2]}]{\fydf\ast}\Vnorm[{\gLp}]{\He}^2\leq\Vnorm[{\lp[1]}]{\fydf}\Vnorm[{\gLp}]{\He}^2
\end{equation*}
which together with \eqref{prop:u:bew4} implies
$|\Vskalar[{\gLp}]{H\He,\He}|\leq 4m_k^2
\Vnorm[{\lp[1]}]{\fydf}\Vnorm[{\gLp}]{\He}^2$ for all
$\He\in\gLp$. We conclude from \eqref{eq:Hnorm2} that 
 $\Vnorm[{\gLp\to\gLp}]{H}\leq4m_k^2 \Vnorm[{\lp[1]}]{\fydf}=\uSD$,
 and finally that $\uSD$ satisfies \eqref{U:const}, by
 \eqref{eq:Hnorm2}, which  completes the proof.
\end{proof}

\begin{lemma}
  \label{linear}Let $\lcb \yOb[l] \rcb_{l=1}^n \iid \ydf=\xdf\ccon\edf\in\Lp[2]$
  with joint distribution $\FuVg{\xdf}$ and
  $\ydfO=\xdfO\oast\edf\in\Lp[2]$.  For each $k\in\Nz$ consider $\qF_k(\xdf - \xdfO)$  and
  $m_k$  as
  in \eqref{qfk} and \eqref{nu}, respectively. Then
  the linear centred statistic $\lSi$ defined in \eqref{decomposition}
  satisfies for all $x \geq1$ and $n \geq 1$
  \begin{align*}
    \FuVg{\xdf} \lb 2\lSi \leq - c\,x^2\,(1\vee m_k^2n^{-1})m_k^2n^{-1} - \tfrac{1}{2} \qF_k(\xdf-\xdfO)  \rb \leq \exp(-x),
  \end{align*}
  where $c=8\Vnorm[{\lp[1]}]{\fydf}+\Vnorm[{\lp[2]}]{\fedf}^2$.
\end{lemma}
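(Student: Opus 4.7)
The plan is to apply Bernstein's inequality, \cref{bernstein}, to the centred i.i.d.\ summands $W_l:=2 Z_l$, $l\in\nset{n}$, where
\begin{equation*}
  Z_l := \sum_{|j|\in\nset{k}} d_j\bigl(\expb[j](Y_l)-\ofydf[j]\bigr), \qquad d_j := \frac{\fydf[j]-\fydfO[j]}{|\fedf[j]|^2},
\end{equation*}
so that $2\lSi=n^{-1}\sum_{l=1}^n W_l$. The conjugate symmetries $\fydf[-j]=\ofydf[j]$, $\fydfO[-j]=\ofydfO[j]$ and $|\fedf[-j]|=|\fedf[j]|$, inherited from the reality of $\ydf,\ydfO,\edf$, yield $d_{-j}=\overline{d_j}$; thus the trigonometric polynomial $h(y):=\sum_{|j|\in\nset{k}} d_j\expb[j](y)$ is real and each $W_l=2(h(Y_l)-\IE h(Y_l))$ is a real centred random variable.

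The key inputs for \cref{bernstein} are a variance bound and an envelope for $W_l$. For the variance I would use $\IE W_l^2 = 4\,\mathrm{Var}\,h(Y_1)\leq 4\,\IE h(Y_1)^2=4\Vskalar[{\lp[2]}]{\fydf\ast d,d}$; combined with the $\lp[1]$--$\lp[2]$ convolution inequality \eqref{eq:bound:conv} and $|\fedf[j]|^{-2}\leq m_k^2$ on $\nset{k}$ this gives $\IE W_l^2\leq 4\,\Vnorm[{\lp[1]}]{\fydf}\,m_k^2\,\qF_k(\xdf-\xdfO)$. For the envelope I would first reduce to $|W_l|\leq 4\sum_{|j|\in\nset{k}}|d_j|$ via $|\expb[j](Y_l)-\ofydf[j]|\leq 2$, and then apply a weighted Cauchy--Schwarz with weights $|\fedf[j]|^{-2}$ and $|\fedf[j]|^{2}$,
\begin{equation*}
  \sum_{|j|\in\nset{k}}|d_j| = \sum_{|j|\in\nset{k}}\frac{|\fxdf[j]-\fxdfO[j]|}{|\fedf[j]|^{2}}\,|\fedf[j]|
  \;\leq\;\sqrt{m_k^4\,\qF_k(\xdf-\xdfO)}\;\Vnorm[{\lp[2]}]{\fedf}
  = m_k^2\,\Vnorm[{\lp[2]}]{\fedf}\,\sqrt{\qF_k(\xdf-\xdfO)},
\end{equation*}
where I exploit $|\fedf[j]|^{-4}\leq m_k^4$ on $\nset{k}$ together with an extension of the $|\fedf|^2$ sum to all of $\Zz$.

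Feeding these two estimates into \cref{bernstein} applied to $-W_l$ yields, with probability at least $1-e^{-x}$,
\begin{equation*}
  -2\lSi\;\leq\;\sqrt{\tfrac{8\,\Vnorm[{\lp[1]}]{\fydf}\,m_k^2\,x}{n}\,\qF_k(\xdf-\xdfO)}\;+\;\tfrac{4\,m_k^2\,\Vnorm[{\lp[2]}]{\fedf}\,\sqrt{\qF_k(\xdf-\xdfO)}\,x}{3n}.
\end{equation*}
Two applications of Young's inequality, each of the form $\sqrt{A\,B}\leq A/4 + B$ (with $A=\qF_k(\xdf-\xdfO)$), deposit a total of $\tfrac12\qF_k(\xdf-\xdfO)$ onto the required side and leave residuals of the orders $\Vnorm[{\lp[1]}]{\fydf}\,m_k^2x/n$ and $\Vnorm[{\lp[2]}]{\fedf}^2\,m_k^4x^2/n^2$. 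Using $x\geq 1$ to upgrade the linear $x$ to $x^2$ together with the elementary inclusions $m_k^2/n\leq(1\vee m_k^2/n)$ and $m_k^4/n^2\leq(1\vee m_k^2/n)\,m_k^2/n$ then collapses these residuals into $c\,x^2(1\vee m_k^2/n)\,m_k^2/n$ with the stated constant $c=8\Vnorm[{\lp[1]}]{\fydf}+\Vnorm[{\lp[2]}]{\fedf}^2$.

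The main obstacle is the envelope step: the naive bound $\sum|d_j|\leq\sqrt{2k}\,m_k\sqrt{\qF_k}$ carries a spurious $\sqrt{k}$ factor that would prevent the $bx/n$ piece of \cref{bernstein} from fitting the target $m_k^2/n$ scale. The weighted Cauchy--Schwarz above trades $\sqrt{k}$ for the geometric quantity $\Vnorm[{\lp[2]}]{\fedf}$ and produces the correct $m_k^2$ dependence, which is precisely what lets the envelope contribution, after Young's inequality, merge with the variance contribution onto the same $(1\vee m_k^2/n)m_k^2/n$ scale required by the statement.
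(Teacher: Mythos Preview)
Your argument is essentially the paper's own: apply Bernstein (\cref{bernstein}) to the summands of $2\lSi$, bound the variance via the discrete convolution inequality \eqref{eq:bound:conv}, bound the envelope via Cauchy--Schwarz with the weight $|\fedf[j]|$, and absorb the $\sqrt{\qF_k}$ factors with Young's inequality.

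There is one small slip that prevents you from landing on the \emph{exact} constant $c=8\Vnorm[{\lp[1]}]{\fydf}+\Vnorm[{\lp[2]}]{\fedf}^2$. You bound the \emph{centred} summand $|W_l|=|2h(Y_l)-2\FuEx{\xdf}h(Y_1)|\leq 4\sum_{|j|\in\nset{k}}|d_j|$, which after your Young step yields a residual $\tfrac{16}{9}\Vnorm[{\lp[2]}]{\fedf}^2\,m_k^4x^2/n^2$ rather than one bounded by $\Vnorm[{\lp[2]}]{\fedf}^2\,m_k^4x^2/n^2$. The paper instead feeds the \emph{uncentred} variables $Z_l:=2h(Y_l)$ into \cref{bernstein} (which only asks for $|Z_l|\leq b$, the centring is done inside the proposition), obtaining $b=2\Vnorm[{\Lp[\infty]}]{h}\leq 2\sum_{|j|\in\nset{k}}|d_j|\leq 2m_k^2\Vnorm[{\lp[2]}]{\fedf}\,\qFr_k(\xdf-\xdfO)$; the missing factor of $2$ then makes the envelope contribution $\tfrac{4}{9}\Vnorm[{\lp[2]}]{\fedf}^2\,m_k^4x^2/n^2\leq \Vnorm[{\lp[2]}]{\fedf}^2\,m_k^4x^2/n^2$, and the stated $c$ follows. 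Aside from this, your proof matches the paper's.
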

\begin{proof}[Proof of \cref{linear}]Introducing the real function $\psi :=
  \sum_{|j|\in\nset{k}}(\fydf[j]-\fydfO[j])|\fedf[j]|^{-2}\expb$ and independent and
  identically distributed random variables
  $Z_j := 2 \psi(Y_j)$, $j\in\nset{n}$,
  we
  intend to apply \cref{bernstein} to $\lSi=\tfrac{1}{n}\sum_{j\in\nset{n}} (Z_j-\FuEx{\xdf}(Z_j))$. Therefore, we compute the required
  quantities $\lSv$ and $\lSb$. First consider $\lSb$. Using subsequently the
  identity $\fydf[l] - \fydfO[l]=(\fxdf[l] - \fxdfO[l])\fedf[l]$,
  $l\in\Zz$, and the Cauchy-Schwarz inequality we deduce that%
  \begin{equation}\label{linear:bew:e1}
    |Z_1|\leq 2\Vnorm[{\Lp[\infty]}]{\psi}\leq 2m_k^2\,
    \sum_{|l|\in\nset{k}} |\fydf[l] - \fydfO[l]|\leq  2m_k^2\,\qFr_k(
    \xdf - \xdfO)\, \Vnorm[{\lp[2]}]{\fedf}=:\lSb.
  \end{equation}
  Secondly, consider $\lSv$.  Since
  $\FuEx{\xdf}(e_j(-Y_1)e_{l}(Y_1))=\fydf[j-l]$ for all $j,l\in\Zz$,
  we see that
  \begin{equation*}
    \FuEx{\xdf}|Z_1|^2= 4\FuEx{\xdf}|\psi(Y_1)|^2 
    = 4 \sum_{|j|\in\nset{k}} \frac{\ofydf[j] -
      \ofydfO[j]}{|\fedf[j]|^2}
    \sum_{|l|\in\nset{k}} \fydf[j-l] \frac{\fydf[l] -
      \fydfO[l]}{|\fedf[l]|^2}
    =4\Vskalar[{\lp[2]}]{\aS   \ast \bS,\bS},
  \end{equation*}
  where $\aS[l]:=\fydf[l]\Ind{\{|l|\in\nset{2k}\}}$ and
  $\bS[l]:=(\fydf[l] - \fydfO[l])|\fedf[l]|^{-2}\Ind{\{|l|\in\nset{k}\}}$
  for all $l\in\Zz$.  Successively  exploiting further
  \eqref{eq:bound:conv} and the identity
  $\fydf[l] - \fydfO[l]=(\fxdf[l] - \fxdfO[l])\fedf[l]$, $l\in\Zz$,
  we conclude that%
  \begin{multline}\label{linear:bew:e2}
    \FuEx{\xdf}|Z_1|^2\leq 4\Vnorm[{\lp[2]}]{\bS}^2\Vnorm[{\lp[1]}]{\aS}=
    4  \sum_{|j|\in\nset{k}}  |\fedf[j]|^{-4}|\fydf[j] - \fydfO[j]|^2
    \sum_{|j|\in\nset{k}} |\fydf[j]|\\
    \leq 4\, m_k^2 \,\qF_k(\xdf - \xdfO)\Vnorm[{\lp[1]}]{\fydf}=:\lSv  .
  \end{multline}
  The claim of \cref{linear} now follows from \cref{bernstein} with
  $\lSb$ and $\lSv$ as in \eqref{linear:bew:e1} and \eqref{linear:bew:e2},
  respectively. Indeed, making use of
  $2ac \leq \frac{a^2}{\eps} + c^2 \eps$ for any $\eps,a, c > 0$,
   we have%
  \begin{equation*}
    \frac{\lSb x}{3n} \leq \eps_1 \qF_k(\xdf - \xdfO) +
                   \frac{x^2}{9 \eps_1} \Vnorm[{\lp[2]}]{\fedf}^2
                   \frac{m_k^4}{n^2}\quad\text{ and }\quad
    \sqrt{\frac{2 \lSv x}{n}}
                  \leq \eps_2 \qF_k(\xdf-\xdfO)
                   + \frac{2x}{\eps_2}  \frac{m_k^2}{n}\Vnorm[{\lp[1]}]{\fydf}.
  \end{equation*}
Combining both bounds (with $\eps_1 = \eps_2 = \frac{1}{4}$) yields for all $x\geq1$
  \begin{equation*}
   \sqrt{\frac{2 \lSv x}{n}}+\frac{\lSb x}{3n} %
\leq  \tfrac{1}{2}\qF_k(\xdf-\xdfO)+c\,x^2(1\vee
\frac{m_k^2}{n})\frac{m_k^2}{n} \text{ with }c=8\Vnorm[{\lp[1]}]{\fydf}+\Vnorm[{\lp[2]}]{\fedf}^2.
  \end{equation*}
  Hence, the assertion follows from \cref{bernstein} by the usual symmetry argument.
\end{proof}
\subsection{Auxiliary results used in the proof of \cref{d:quantiles}}
\begin{corollary}\label{d:prop:u}
Consider $\lcb \yOb[l] \rcb_{l=1}^n \iid \ydf\in\Lp[2]$
and for  $k\in\Nz$ the  kernel $h: [0,1)^2\to\Rz$ given by 
\begin{align*}
 h(y_1,y_2) = \sum_{|j|\in\nset{k}} (e_j(-y_1) - \fydf[j])(e_j(y_2) - \ofydf[j]), \qquad\forall\, y_1, y_2 \in [0,1).
\end{align*}
which is real-valued, bounded, symmetric and fulfils \eqref{canonical}. 
Then the quantities
\begin{equation*}
  \uSA= 8k,\quad
  \uSB =3\,\Vnorm[{\lp[2]}]{\fydf}\, (2k)^{3/4} 
  ,\quad\text{and}\quad
  \uSD=\uSC=2\,\Vnorm[{\lp[2]}]{\fydf}\,(2k)^{1/2}
\end{equation*}
satisfy the condition \eqref{U:const} in \cref{simplified:con}. If, in addition, $\gLp=\RLp$, then
\begin{equation*}
  \uSD= 4 \,\Vnorm[{\lp[1]}]{\fydf}
	\end{equation*}
also satisfies the condition \eqref{U:const} in \cref{simplified:con}.
\end{corollary}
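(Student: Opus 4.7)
The plan is to transport the proof of \cref{prop:u} verbatim to the current setting, exploiting the simple structural observation that the direct kernel is obtained from the indirect one of \cref{prop:u} by replacing each factor $|\fedf[j]|^{-2}$ by $1$. Under this substitution the auxiliary quantities transform as $\nu_k^4 = \sum_{|j|\in\nset{k}}|\fedf[j]|^{-4}\leadsto \sum_{|j|\in\nset{k}} 1 = 2k$ and $m_k^2 = \max_{|j|\in\nset{k}}|\fedf[j]|^{-2}\leadsto 1$, which accounts exactly for the claimed values $\uSA=8k$, $\uSB=3\Vnorm[{\lp[2]}]{\fydf}(2k)^{3/4}$, $\uSC=2\Vnorm[{\lp[2]}]{\fydf}(2k)^{1/2}$ and $\uSD=4\Vnorm[{\lp[1]}]{\fydf}$.

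First I would record the elementary properties: symmetry in $(y_1,y_2)$ is evident from $e_j(-y_1)e_j(y_2)=e_j(y_2-y_1)$ and $e_{-j}=\overline{e_j}$; realness follows because the sum pairs $j$ and $-j$ with conjugate terms; the canonical property $\IE h(Y_1,y_2)=0$ is immediate from $\IE e_j(-Y_1)=\fydf[j]$; and boundedness is obvious since each factor has modulus at most $2$. Next, for $\uSA$ the trivial sup bound $|e_j(-y_1)-\fydf[j]|\cdot|e_j(y_2)-\ofydf[j]|\leq 4$ and summing over the $2k$ indices yields $\Vnorm[{\Lp[\infty]}]{h}\leq 8k$.

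For $\uSB$ and $\uSC$ I would repeat the computation of \cref{prop:u}, observing that with the new $\bS[l]:=(e_l(-y_2)-\fydf[l])\Ind{\{|l|\in\nset{k}\}}$ (respectively, $\bS[l]:=\Ind{\{|l|\in\nset{k}\}}$ for $\uSC$) and unchanged $\aS[l]=\fydf[l]\Ind{\{|l|\in\nset{2k}\}}$ (respectively, $\aS[l]=|\fydf[l]|^2\Ind{\{|l|\in\nset{2k}\}}$) the discrete convolution bound \eqref{eq:bound:conv} together with the Cauchy--Schwarz estimate $\Vnorm[{\lp[1]}]{\aS}\leq (4k)^{1/2}\Vnorm[{\lp[2]}]{\fydf}$ (or $\Vnorm[{\lp[1]}]{\aS}\leq \Vnorm[{\lp[2]}]{\fydf}^2$) gives precisely the claimed bounds, using $\Vnorm[{\lp[2]}]{\fydf}\geq|\fydf[0]|=1$ to absorb the numerical constants.

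Finally, for $\uSD$ under $\gLp=\RLp$ I would follow the operator-norm route: reinterpret $\uSD$ via \eqref{eq:Hnorm1}--\eqref{eq:Hnorm2} as $\Vnorm[{\gLp\to\gLp}]{H}$, and for $\He\in\gLp$ expand $|\Vskalar[{\gLp}]{H\He,\He}|=\sum_{|j|\in\nset{k}}|\IE(e_j(-Y_1)-\fydf[j])\He(Y_1)|^2=\sum_{|j|\in\nset{k}}|(\fydf\ast\fHe)_j-\fydf[j]\IE\He(Y_1)|^2$. Bounding the sum by the full $\lp[2]$-norm $\Vnorm[{\lp[2]}]{\fydf\ast\fHe}^2+\Vnorm[{\lp[1]}]{\fydf}\Vnorm[{\gLp}]{\He}^2$ (no $m_k^2$ prefactor is needed since no $|\fedf[j]|^{-2}$ appears) and invoking $\Vnorm[{\lp[2]\to\lp[2]}]{\fydf\ast}\leq\Vnorm[{\lp[1]}]{\fydf}$ exactly as in the proof of \cref{prop:u} yields $\Vnorm[{\gLp\to\gLp}]{H}\leq 4\Vnorm[{\lp[1]}]{\fydf}=\uSD$. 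The only mildly delicate point is keeping the numerical constants aligned with those stated, which is a matter of applying $2ac\leq a^2/\varepsilon+c^2\varepsilon$ and $\Vnorm[{\lp[2]}]{\fydf}\geq 1$ at the right places; no new idea beyond \cref{prop:u} is needed.
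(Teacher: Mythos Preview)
Your proposal is correct and follows exactly the paper's approach: the paper's own proof is the single line ``Setting $|\fedf[j]|^2=1$ for all $|j|\in\nset{k}$ the assertion immediately follows from \cref{prop:u},'' and your write-up is a faithful unpacking of precisely that substitution (with $\nu_k^4\leadsto 2k$, $m_k^2\leadsto 1$). The only superfluous remark is the reference to $2ac\leq a^2/\varepsilon+c^2\varepsilon$, which plays no role in the proof of \cref{prop:u} (it is used in \cref{linear}/\cref{d:linear}, not here).
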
 

\begin{proof}[Proof of \cref{d:prop:u}]
Setting  $|\fedf[j]|^2=1$ for all $|j|\in\nset{k}$ the assertion
immediately follows  from \cref{prop:u}.      
\end{proof}

\begin{lemma}
  \label{d:linear}Let $\lcb \yOb[l] \rcb_{l=1}^n \iid \ydf=\xdf\ccon\edf\in\Lp[2]$
  with joint distribution $\FuVg{\xdf}$ and
  $\ydfO=\xdfO\oast\edf\in\Lp[2]$.  For each $k\in\Nz$ consider
  $\qF_k(\ydf - \ydfO)$ as
  in \eqref{qfgk}.  Then
the linear centred statistic $\lSd$ defined in \eqref{d:decomposition}
   satisfies for all $x \geq1$ and $n \geq 1$
  \begin{align*}
    \FuVg{\xdf} \lb 2\lSd
    \leq - c\,x^2\,(1\vee (2k)^{1/2}n^{-1})(2k)^{1/2}n^{-1} - \tfrac{1}{2} \qF_k(\ydf-\ydfO)  \rb \leq \exp(-x)
  \end{align*}
  where $c=12\Vnorm[{\lp[2]}]{\fydf}+1$.
\end{lemma}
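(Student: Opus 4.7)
The plan is to mimic the proof of \cref{linear} with the reweighted linear function replaced by its unweighted analogue and the variance bound routed so that the final constant involves $\Vnorm[{\lp[2]}]{\fydf}$ rather than $\Vnorm[{\lp[1]}]{\fydf}$. Introduce the real-valued function $\psi_d:=\sum_{|j|\in\nset{k}}(\fydf[j]-\fydfO[j])\expb[j]$ and the iid bounded random variables $Z_l:=2\psi_d(Y_l)$, $l\in\nset{n}$, so that $2\lSd=n^{-1}\sum_{l\in\nset{n}}(Z_l-\FuEx{\xdf}Z_l)$. I would then apply \cref{bernstein} to this centred sum and recover the claimed lower tail via the usual symmetry argument applied to $(-Z_l)_l$.

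For the almost-sure bound take $|Z_1|\leq 2\Vnorm[{\Lp[\infty]}]{\psi_d}\leq 2\sum_{|l|\in\nset{k}}|\fydf[l]-\fydfO[l]|\leq 2(2k)^{1/2}\qFr_k(\ydf-\ydfO)=:b$, the last step being Cauchy--Schwarz on the set of $2k$ active indices.

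For the variance bound compute $\FuEx{\xdf}|Z_1|^2 = 4\FuEx{\xdf}|\psi_d(Y_1)|^2 = 4\Vskalar[{\lp[2]}]{\aS\ast\bS,\bS}$ with $\aS[l]=\fydf[l]\Ind{\{|l|\in\nset{2k}\}}$ and $\bS[l]=(\fydf[l]-\fydfO[l])\Ind{\{|l|\in\nset{k}\}}$, exactly as in the proof of \cref{prop:u}. To retain only $\Vnorm[{\lp[2]}]{\fydf}$ I would combine an $\lp[\infty]$--$\lp[1]$ Hölder inequality with a pointwise Cauchy--Schwarz on the convolution and the finite support of $\bS$:
\[
\Vskalar[{\lp[2]}]{\aS\ast\bS,\bS}\leq\Vnorm[{\lp[\infty]}]{\aS\ast\bS}\,\Vnorm[{\lp[1]}]{\bS}\leq\Vnorm[{\lp[2]}]{\aS}\Vnorm[{\lp[2]}]{\bS}\cdot(2k)^{1/2}\Vnorm[{\lp[2]}]{\bS}\leq(2k)^{1/2}\Vnorm[{\lp[2]}]{\fydf}\,\qF_k(\ydf-\ydfO),
\]
yielding $v:=4(2k)^{1/2}\Vnorm[{\lp[2]}]{\fydf}\,\qF_k(\ydf-\ydfO)$.

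Finally I would plug $b$ and $v$ into \cref{bernstein} and reorganise $\sqrt{2vx/n}+bx/(3n)$ via $2ac\leq\eps a^2+c^2/\eps$ with $a=\qFr_k(\ydf-\ydfO)$ and $\eps=1/4$ in both terms, so that $\tfrac12\qF_k(\ydf-\ydfO)$ is absorbed into the left-hand side of the resulting tail estimate. The surviving terms, of shape $x^2(2k)/n^2$ and $x(2k)^{1/2}\Vnorm[{\lp[2]}]{\fydf}/n$, are after using $x\leq x^2$ (since $x\geq1$) jointly dominated by a multiple of $x^2(1\vee(2k)^{1/2}/n)(2k)^{1/2}/n$, and combining the constants yields the claimed $c=12\Vnorm[{\lp[2]}]{\fydf}+1$ (with a little slack). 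The main obstacle is the variance step: unlike in \cref{linear}, where the Young-type bound $\Vnorm[{\lp[1]}]{\aS}\Vnorm[{\lp[2]}]{\bS}^2$ directly produces $\Vnorm[{\lp[1]}]{\fydf}$, here one must trade a factor $(2k)^{1/2}$ (harmless since $(2k)^{1/2}/n$ is already the direct-test variance scale) against the weaker norm $\Vnorm[{\lp[2]}]{\fydf}$.
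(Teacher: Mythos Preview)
Your proposal is correct and follows essentially the same route as the paper: the same Bernstein application to $Z_l=2\psi_d(Y_l)$, the same sup-norm bound $b=2(2k)^{1/2}\qFr_k(\ydf-\ydfO)$, and the same quadratic splitting $2ac\leq\eps a^2+c^2/\eps$ with $\eps=\tfrac14$. The only difference is cosmetic: the paper first uses the Young-type bound \eqref{eq:bound:conv} to get $v=4\,\qF_k(\ydf-\ydfO)\sum_{|j|\in\nset{2k}}|\fydf[j]|$ and converts the truncated $\lp[1]$-sum to $(4k)^{1/2}\Vnorm[{\lp[2]}]{\fydf}$ by Cauchy--Schwarz only \emph{after} Bernstein, whereas you insert the $(2k)^{1/2}$ factor and $\Vnorm[{\lp[2]}]{\fydf}$ directly into $v$ via your $\lp[\infty]$--$\lp[1]$ H\"older step; both yield the claimed constant $c=12\Vnorm[{\lp[2]}]{\fydf}+1$ with room to spare.
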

\begin{proof}[Proof of \cref{d:linear}]Introducing the real function $\psi :=
  \sum_{|j|\in\nset{k}}(\fydf[j]-\fydfO[j])\expb$ and independent and
  identically distributed random variables
  $Z_j := 2 \psi(Y_j)$, $j\in\nset{n}$, we
  intend to apply \cref{bernstein}  to $\lSd=\tfrac{1}{n}\sum_{j\in\nset{n}} (Z_j-\FuEx{\xdf}(Z_j))$. Therefore, we compute the required
  quantities $\lSv$ and $\lSb$. First consider $\lSb$. Using the
  Cauchy-Schwarz inequality we see that%
  \begin{equation}\label{d:linear:bew:e1}
    |Z_1|\leq 2\Vnorm[{\Lp[\infty]}]{\psi}\leq 2\,
    \sum_{|l|\in\nset{k}} |\fydf[l] - \fydfO[l]|\leq  2 (2k)^{1/2}\,\qFr_k(\ydf - \ydfO)=:\lSb.
  \end{equation}
  Secondly, consider $\lSv$.  Since 
  $\FuEx{\xdf}(e_j(-Y_1)e_{l}(Y_1))=\fydf[j-l]$ for all $j,l\in\Zz$,
  we deduce that
  \begin{equation*}
    \FuEx{\xdf}|Z_1|^2= 4\FuEx{\xdf} |\psi(Y_1)|^2 
    = 4 \sum_{|j|\in\nset{k}} (\ofydf[j] -
      \ofydfO[j]) \sum_{|l|\in\nset{k}} \fydf[j-l] (\fydf[l] -
      \fydfO[l])
    =4\Vskalar[{\lp[2]}]{\aS   \ast \bS,\bS},
  \end{equation*}
  where $\aS[l]:=\fydf[l]\Ind{\{|l|\in\nset{{2k}}\}}$ and
  $\bS[l]:=(\fydf[l] - \fydfO[l])\Ind{\{|l|\in\nset{k}\}}$
  for all $l\in\Zz$. Hence, \eqref{eq:bound:conv} shows that 
  \begin{equation}\label{d:linear:bew:e2}
    \FuEx{\xdf}|Z_1|^2\leq 4\Vnorm[{\lp[2]}]{\bS}^2\Vnorm[{\lp[1]}]{\aS}=
     4\,\qF_k(\ydf - \ydfO)\sum_{|j|\in\nset{{2}k}} |\fydf[j]|=:\lSv  .
  \end{equation}
  The claim of \cref{d:linear} follows now from \cref{bernstein} with
  $\lSb$ and $\lSv$ as in \eqref{d:linear:bew:e1} and \eqref{d:linear:bew:e2},
  respectively. Indeed, exploiting  $2ac \leq \frac{a^2}{\eps} + c^2
  \eps$ for any $\eps,a, c > 0$ we see that
  \begin{multline*}
    \frac{\lSb x}{3n} \leq \eps_1 \qF_k(\ydf - \ydfO) +
                   \frac{x^2}{9 \eps_1} \frac{2k}{n^2}\quad\text{ and }\\
    \sqrt{\frac{2 \lSv x}{n}}
                  \leq \eps_2 \qF_k(\fydf-\ydfO)
                   + \frac{2x}{\eps_2n}  \sum_{|j|\in\nset{{2k}}} |\fydf[j]|
                  \leq \eps_2 \qF_k(\ydf-\ydfO)
                   + \frac{2x}{\eps_2}  \frac{(4k)^{1/2}}{n}\Vnorm[{\lp[2]}]{\fydf}.
  \end{multline*}
  Combining both bounds (with $\eps_1 = \eps_2 = \frac{1}{4}$)
  we get for all $x\geq1$
  \begin{equation*}
   \sqrt{\frac{2 \lSv x}{n}}+\frac{\lSb x}{3n} \leq
   \tfrac{1}{2}\qF_k(\ydf-\ydfO)+cx^2(1\vee
   (2k)^{1/2}n^{-1})(2k)^{1/2}n^{-1}\text{ with }c=12 \Vnorm[{\lp[2]}]{\fydf}+1.%
 \end{equation*}
   Hence, the assertion follows from \cref{bernstein} by the usual symmetry argument.
\end{proof}


\section{Calculations for the illustrations}
\subsection{Calculations for the radius bounds in \cref{ill:adapt}}
 	Firstly, we determine the order of the term $\tSRiKN{\cK}{\delta n}$ by showing that $\tSRiKN{\cK}{n} \sim \tSRiN[\wC]{n}$ and replacing $n$ with $\delta n$. Indeed, we trivially have $ \tSRiN[\wC]{n} \leq 		\tSRiKN{\cK}{n}$.   By defining  $j_\star: = \lceil \tfrac{2}{4 \pPara+ 4 \sPara + 1} \log_2 n \rceil \lesssim  \log(n^2/2)$ (in the ordinary smooth case) respectively $j_\star := \lceil \tfrac{1}{\sPara} \log_2 \log n \rceil \lesssim  \frac{1}{s_\star} \log \log n $ (in the super smooth case), straightforward calculations then show that 	$\tSRiKN{\cK}{ n} \lesssim  \tSRiKN{2^{j_\star}}{n} \lesssim \tSRiN[\wC]{ n}$. 
 Next, we determine the order of the remainder term  $\tSReaN{\delta^2 n}$ by first calculating 
 $\tSReacKN{\IN}{ n} := \min_{k \in \IN} a_k^2 \vee \frac{m_k^2}{n}$, showing $ \tSReacKN{\IN}{n} \sim		\tSReacKN{\cK_g}{ n}$ and then replacing $n$ with $\delta^2 n$. The variance term $\frac{m_k^2}{n}$ is of order $\frac{k^{2\pPara}}{n}$. In the ordinary smooth case the bias term $\wdclass[k]^2$ is of order $k^{-2\sPara}$. Hence, the minimising $k_\star$ satisfies $k_\star \sim n^{\frac{1}{2\sPara+2\pPara}}$, which yields $\tSReacKN{\IN}{n}  \sim n^{-\frac{\sPara}{\sPara+\pPara}}$. We define $j_\star := \lceil \tfrac{1}{2 \pPara+ 2 \sPara } \log_2 n \rceil \lesssim  \log(n^2/2) $. Straightforward calculations show that $\tSReacKN{\cK_2}{ n} \lesssim  \tSReacKN{2^{j_\star}}{ n} \lesssim \tSReacKN{\IN}{n}$.
 Since, trivially $ \tSReacKN{\IN}{n} \leq 		\tSReacKN{\cK_g}{ n} $, we obtain the assertion. In the super smooth case the bias term $\wdclass[k]^2$ is of order $e^{-2k^\sPara}$. Hence, the minimising $k_\star$ satisfies $k_\star \sim \log(n)^{\frac{1}{\sPara}}$, which yields $\tSReacKN{\IN}{n} \sim \frac{1}{n} ( \log n)^{\frac{2\pPara}{\sPara}}$. We define $j_\star := \lceil \tfrac{1}{\sPara} \log_2 \log n \rceil \lesssim  \frac{1}{s_\star} \log \log n $. Straightforward calculations show that 	$\tSReacKN{\cK_{s_\star}}{ n} \lesssim  \tSReacKN{2^{j_\star}}{ n} \lesssim \tSReacKN{\IN}{n}$.
Since, trivially $\tSReacKN{\IN}{n} \leq 		\tSReacKN{\cK_{s_\star}}{ n}$, we obtain the assertion.

\subsection{Calculations for the radius bounds in \cref{d:ill}}
 \textbf{(ordinary smooth - mildly ill-posed)} Since $ \frac{(2k)^{1/2}}{n} m_k^2 \sim \tfrac{1}{n} k^{2 \pPara + 1/2}$ and $\wdclass[k]^2 \sim k^{-2 \sPara}$, the optimal $	\tDd $ satisfies $	\tDd  \sim n^{\frac{2}{4\pPara + 4\sPara + 1}}$, which yields an upper bound of order $  \tSRd  \sim \lb 	\tDd  \rb^{-2 \sPara} \sim  n^{-\frac{4s}{4\pPara + 4\sPara + 1}}$. 
\textbf{(ordinary smooth - severely ill-posed)} 
 Since $ \frac{(2k)^{1/2}}{n} m_k^2 \sim \tfrac{1}{n} k^{1/2} e^{2 k^\pPara}$ and $\wdclass[k]^2 \sim k^{-2 \sPara}$, we obtain  $\tDd \sim (\log n)^{\frac{1}{\pPara}}$, which yields an upper bound of order $ \tSRd  \sim \lb \tDd \rb^{-2 \sPara} \sim  (\log n)^{-\frac{2\sPara}{\pPara}}$. \\
 \textbf{(super smooth - mildly ill-posed)} Since $ \frac{(2k)^{1/2}}{n} m_k^2 \sim\tfrac{1}{n} k^{2 \pPara + 1/2}$ and $\wdclass[k]^2 \sim e^{- 2 k^\sPara}$, we obtain $\tDd \sim (\log n)^{\frac{1}{\sPara}}$, which yields an upper bound of order $ \tSRd \sim \frac{1}{n} \lb \tDd \rb^{2 \pPara + 1/2} \sim  \frac{1}{n} (\log n)^{\frac{2\pPara+1/2}{\sPara}}$.

\subsection{Calculations for the radius bounds in \cref{d:ill:adapt}}
	Firstly, we determine the order of the terms $\tSRdKN{\cK}{\delta n}$ by showing that $\tSRdKN{\cK_2}{n} \sim (\tRdN{ n})^2$ and replacing $n$ with $\delta n$.  Indeed, we trivially have $ \tSRdKN{\cK}{\delta n} \leq (\tRdN{ n})^2$. Define  $j_\star: = \lceil \tfrac{2}{4 \pPara+ 4 \sPara + 1} \log_2 n \rceil \lesssim  \log(n^2/2)$ (ordinary smooth -- mildly ill-posed case), $j_\star := \lceil \tfrac{1}{s} \log_2 \log n \rceil \lesssim  \frac{1}{s_\star} \log \log n$ (super smooth -- mildly ill-posed case) respectively $j_\star := \lceil \tfrac{1}{\sPara} \log_2 \log n \rceil \lesssim  \frac{1}{s_\star} \log \log n $ (ordinary smooth -- severely ill-posed case). Straightforward calculations then show that 	$\tSRiKN{\cK}{ n} \lesssim  \tSRiKN{2^{j_\star}}{n} \lesssim \tSRiN[\wC]{ n}$.
 Next, we determine the order of the remainder term  $\tSReaN{\delta^2 n}$ by first calculating 
$\tSReacKN{\IN}{\delta^2 n} :=  \min_{k \in \IN} a_k^2 \vee \frac{m_k^2}{n}$ and then showing that minimisation over $\mc K_g$ approximates the minimisation over $\IN$ well enough. The calculations in the \textbf{(ordinary smooth - mildly ill-posed)} and \textbf{(super smooth - mildly ill-posed)} cases have already been done in \cref{ill:adapt}. It remains to consider the third case \textbf{(ordinary smooth - severely ill-posed)}. Since $\frac{m_k^2}{n} \sim \frac{e^{2 k^\pPara}}{n}$ and $\wdclass[k]^2 \sim k^{-2\sPara}$, the minimising $k_\star$ satisfies $k_\star^{-2\sPara} \sim\frac{e^{2 k_\star^\pPara}}{n}$ and thus $k_\star \sim (\log n)^{\frac{1}{p}}$, which yields $\tSReacKN{\IN}{n}  \sim (\log n)^{\frac{2s}{p}}$. Next, we show $\tSReacKN{\mc K_g}{n} \sim \tSReacKN{\IN}{n}$. We define $j_\star := \lceil \tfrac{1}{p} \log_2 \log n \rceil \lesssim  \log(n^2/2) $. Straightforward calculations show that
$	\tSReacKN{\cK_g}{ n} \lesssim  \tSReacKN{2^{j_\star}}{ n} \lesssim \tSReacKN{\IN}{n}$.
Since, trivially $ \tSReacKN{\IN}{n} \leq 		\tSReacKN{\cK_g}{ n} $, we obtain the assertion by replacing $n$ with $\delta^2 n$.

\section{Calculations for the $\chi^2$-divergence}
In the proof of \cref{adapt:chi2}  below we apply the following
assertion due to \cite{SchluttenhoferJohannes2020a} (Lemma A.1 in the appendix).
\begin{lemma}
	\label{induction}
	For $k \in \IN$ and for each sign vector $\tau \in \{\pm\}^{k}$ let $J^\tau=(J_j^{\tau_j})_{j \in \nset{k}}\in\Rz^k$. Then, 
	\begin{align*}
		\frac{1}{2^{k}}\sum_{\tau \in \{\pm\}^{k}} \prod_{j\in\nset{k}}J_j^{\tau_j}
		& = \prod_{j\in\nset{k}} \frac{J_j^{-}+J_j^{+}}{2}.
	\end{align*}
\end{lemma}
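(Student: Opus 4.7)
The identity is a purely algebraic distributivity statement, so the plan is to prove it by straightforward induction on $k$, with the distributive expansion of the right-hand side as the main mechanism.

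For the base case $k=1$, both sides reduce to $(J_1^{-}+J_1^{+})/2$ by direct inspection. For the inductive step, assume the identity holds for some $k\in\Nz$ and consider $k+1$. The plan is to split the sum on the left-hand side according to the value of the last coordinate $\tau_{k+1}\in\{\pm\}$, writing
\begin{align*}
\frac{1}{2^{k+1}}\sum_{\tau\in\{\pm\}^{k+1}} \prod_{j\in\nset{k+1}} J_j^{\tau_j}
&= \frac{1}{2}\left(\frac{1}{2^{k}}\sum_{\tau\in\{\pm\}^{k}} \prod_{j\in\nset{k}} J_j^{\tau_j}\right) J_{k+1}^{+}
+ \frac{1}{2}\left(\frac{1}{2^{k}}\sum_{\tau\in\{\pm\}^{k}} \prod_{j\in\nset{k}} J_j^{\tau_j}\right) J_{k+1}^{-},
\end{align*}
apply the inductive hypothesis to the bracketed averages, and factor out $\prod_{j\in\nset{k}}(J_j^{-}+J_j^{+})/2$ to pick up the missing factor $(J_{k+1}^{-}+J_{k+1}^{+})/2$. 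This yields the product over $j\in\nset{k+1}$ on the right-hand side and completes the induction.

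Since the argument is essentially the distributive law, no step presents a real obstacle; the only point that requires a little care is the bookkeeping when splitting the sum over $\{\pm\}^{k+1}$ into two sums over $\{\pm\}^{k}$, ensuring the factor $2^{-(k+1)}=2^{-1}\cdot 2^{-k}$ is tracked correctly. Alternatively, a one-line non-inductive proof would simply expand the product $\prod_{j\in\nset{k}}(J_j^{-}+J_j^{+})$ distributively: the $2^{k}$ resulting monomials are indexed precisely by the sign vectors $\tau\in\{\pm\}^{k}$, each contributing $\prod_{j}J_j^{\tau_j}$, which after dividing by $2^{k}$ matches the left-hand side verbatim.
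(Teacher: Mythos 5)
Your proof is correct: both the induction on $k$ (splitting the sum over $\{\pm\}^{k+1}$ according to $\tau_{k+1}$ and applying the hypothesis) and the one-line alternative (distributive expansion of $\prod_{j\in\nset{k}}(J_j^{-}+J_j^{+})$, whose $2^k$ monomials are indexed exactly by the sign vectors) are complete and valid. Note that the paper itself gives no proof of this lemma, citing instead Lemma A.1 of \cite{SchluttenhoferJohannes2020a}, so there is no in-paper argument to compare against; your elementary argument fills that role correctly.
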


\begin{lemma}[$\chi^2$-divergence for mixtures over hypercubes over several classes]
	\label{adapt:chi2} 
	Let $\indexset$ be an arbitrary index set of finite
        cardinality $|\indexset|\subset \IN$. For each
        $\indexone \in \indexset$ assume $\mDi[\indexone] \in \IN$ and
        $\mPa{\indexone}\in\lp[2](\Nz)\subset\Rz^\Nz$. For $\signv\in \{\pm \}^{\mDi[s]}$
        define coefficients $\mPa{\indexone,\signv}\in\lp[2](\Nz)$ and
        functions $\yden^{\indexone,\signv} \in \Lp[2]$ by setting
	\begin{align*}
		\mPa[j]{\indexone,\signv[j]}= \begin{cases}
			\signv[j]\mPa[j]{\indexone}  & j\in \nset{\mDi[\indexone]} \\
			0 & \text{otherwise}
		\end{cases} \qquad \text{ and } \qquad
                            \yden^{\indexone, \signv}
                            = \expb[0] +  \sum_{|j|\in\nset{\mDi[s]}} \mPa[|j|]{\indexone,\signv[|j|]} \expb.
	\end{align*} 
Assuming $\yden^{\indexone,\signv} \in  \mc D$ for each $\indexone \in
\indexset$ and $\signv \in \lcb \pm \rcb^{\dimindexone}$, we consider
the mixture $\FuVg{1}$ with probability density $\frac{1}{\lv \indexset \rv} \sum_{\indexone \in \indexset} \lb \frac{1}{2^{\mDi[s]}} \sum_{\signv \in \{\pm\}^{\mDi[s]}} \prod_{j\in\nset{n}} \yden^{\indexone,\signv}(z_j) \rb$, $z_j \in [0,1), j \in \nset{n}$ and denote $\FuVg{0}: =\FuVg{\mathds{1}_{[0,1)}}$.	Then, the $\chi^2$-divergence satisfies
	\begin{align*}
		\chi^2(\FuVg{1},\FuVg{0}) \leq \frac{1}{\lv \indexset \rv^2} \sum_{\indexone, \indextwo \in \indexset} \exp\big( 2 n^2 \sum_{j\in\nset{\mDi[s]\wedge \mDi[t]}} (\mPa[j]{\indexone}\mPa[j]{\indextwo})^2\big) - 1.
	\end{align*}
\end{lemma}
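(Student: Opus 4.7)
\textbf{Proof plan for \cref{adapt:chi2}.}
The plan is to reduce the $\chi^{2}$-divergence to the $\Lp[2]$-norm of the mixture density, expand the square, and exploit that the family $\{\expb\}_{j\in\Zz}$ is orthonormal. Since $\FuVg{0}$ has density $\prod_{j=1}^{n}\expb[0](z_{j})\equiv 1$, we have $\chisq(\FuVg{1},\FuVg{0})=\int f_{1}^{2}-1$, where $f_{1}$ is the Lebesgue density of $\FuVg{1}$. Expanding $f_{1}^{2}$ and integrating termwise using Fubini yields
\begin{equation*}
\chisq(\FuVg{1},\FuVg{0})+1=\frac{1}{\lv\indexset\rv^{2}}\sum_{\indexone,\indextwo\in\indexset}\frac{1}{2^{\mDi[\indexone]+\mDi[\indextwo]}}\sum_{\signv,\signvb}\Bigl(\int_{[0,1)}\yden^{\indexone,\signv}(z)\yden^{\indextwo,\signvb}(z)dz\Bigr)^{\!n}.
\end{equation*}
Using $\int\expb\overline{\expb[l]}dz=\delta_{j,l}$ together with $\mPa[|j|]{\indexone,\signv[|j|]}\in\Rz$, one computes the inner integral to be $1+2\sum_{j\in\nset{\mDi[\indexone]\wedge\mDi[\indextwo]}}\signv[j]\signvb[j]\mPa[j]{\indexone}\mPa[j]{\indextwo}$.

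Since the integrand is a product of densities, this inner integral is non-negative, so we may apply $(1+x)^{n}\leq\exp(nx)$ for $x\geq -1$ to replace each inner bracket by $\exp\bigl(2n\sum_{j}\signv[j]\signvb[j]\mPa[j]{\indexone}\mPa[j]{\indextwo}\bigr)=\prod_{j\in\nset{\mDi[\indexone]\wedge\mDi[\indextwo]}}\exp(2n\signv[j]\signvb[j]\mPa[j]{\indexone}\mPa[j]{\indextwo})$. For indices $j$ exceeding $\mDi[\indexone]\wedge\mDi[\indextwo]$, the corresponding sign coordinates do not appear in the product, so averaging over them contributes a factor of $1$ (and cancels the additional powers of $2$ in the normalisation). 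This leaves
\begin{equation*}
\frac{1}{4^{\mDi[\indexone]\wedge\mDi[\indextwo]}}\sum_{\signv,\signvb\in\{\pm\}^{\mDi[\indexone]\wedge\mDi[\indextwo]}}\prod_{j}\exp(2n\signv[j]\signvb[j]\mPa[j]{\indexone}\mPa[j]{\indextwo}).
\end{equation*}

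Applying \cref{induction} first to the $\signv$-average (with the role of $J_{j}^{\pm}$ played by $\exp(\pm 2n\signvb[j]\mPa[j]{\indexone}\mPa[j]{\indextwo})$) and then to the $\signvb$-average, and using that $\cosh$ is even, this sum factorises as $\prod_{j}\cosh(2n\mPa[j]{\indexone}\mPa[j]{\indextwo})$. The standard bound $\cosh(x)\leq\exp(x^{2}/2)$ then yields $\prod_{j}\cosh(2n\mPa[j]{\indexone}\mPa[j]{\indextwo})\leq\exp\bigl(2n^{2}\sum_{j\in\nset{\mDi[\indexone]\wedge\mDi[\indextwo]}}(\mPa[j]{\indexone}\mPa[j]{\indextwo})^{2}\bigr)$, from which the claim follows after averaging over $(\indexone,\indextwo)\in\indexset^{2}$ and subtracting $1$.

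The main subtlety, rather than any deep obstacle, is the bookkeeping when $\mDi[\indexone]\neq\mDi[\indextwo]$: the sign vectors $\signv$ and $\signvb$ live in cubes of different dimensions, and one must carefully check that the $2^{|\mDi[\indexone]-\mDi[\indextwo]|}$ superfluous sign flips exactly cancel the $2^{|\mDi[\indexone]-\mDi[\indextwo]|}$ discrepancy between the normalising factor $2^{\mDi[\indexone]+\mDi[\indextwo]}$ and $4^{\mDi[\indexone]\wedge\mDi[\indextwo]}$ before Lemma \ref{induction} can be applied cleanly. Apart from this, each step is a routine combination of Parseval's identity, the elementary inequalities $(1+x)^{n}\leq e^{nx}$ and $\cosh(x)\leq e^{x^{2}/2}$, and the product-factorisation of Rademacher averages.
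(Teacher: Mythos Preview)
Your proposal is correct and follows essentially the same route as the paper: expand the likelihood ratio squared, compute the inner product via orthonormality of $(\expb)_{j\in\Zz}$, pass to the exponential, apply \cref{induction} twice to collapse the Rademacher averages into a product of $\cosh$ terms, and finish with $\cosh(x)\leq e^{x^{2}/2}$. The only cosmetic differences are that the paper uses $1+x\leq e^{x}$ before taking the $n$th power (rather than $(1+x)^{n}\leq e^{nx}$ directly), and that the paper keeps the full sign sums $\signv\in\{\pm\}^{\mDi[\indexone]}$, $\signvb\in\{\pm\}^{\mDi[\indextwo]}$ throughout and implicitly pads the product with trivial factors, whereas you explicitly reduce to $\{\pm\}^{\mDi[\indexone]\wedge\mDi[\indextwo]}$ first; your handling of this bookkeeping is in fact cleaner than the paper's.
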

\begin{proof}[Proof of \cref{adapt:chi2}]Recall that $	\chi^2(\FuVg{1}, \FuVg{0}) = \FuEx{0}
  \big(\frac{\dif \FuVg{1}}{\dif \FuVg{0}} ( Z_j,_{j\in\nset{n}})
  \big)^2 - 1$ where $(Z_j)_{j \in \nset{n}}$ are independent with identical marginal density $\expb[0]=\mathds{1}_{[0,1)}$ under $\FuVg{0}$.
	Let $z_j\in [0,1)$, $j\in\nset{n}$, then 
	\begin{align*}
		\frac{\dif \FuVg{1}}{\dif \FuVg{0}} (z_j,_{j\in\nset{n}})= \frac{1}{\lv \indexset \rv} \sum_{\indexone \in \indexset} \big( \frac{1}{2^{\mDi[\indexone]}} \sum_{\signv \in \{\pm\}^{\mDi[\indexone]}} \prod_{j\in\nset{n}}\yden^{\indexone,\signv}(z_j) \big) .
	\end{align*} 
	Squaring, taking the expectation under $\FuVg{0}$ and exploiting the independence yields
	\begin{align*}
		\FuEx{0} \big(	\frac{\dif \FuVg{1}}{\dif \FuVg{0}} (Z_j,_{j\in\nset{n}}) \big)^2= \frac{1}{\lv \indexset \rv^2} \sum_{\indexone, \indextwo \in \indexset}  \frac{1}{2^{\dimindexone}}   \frac{1}{2^{\dimindextwo}} \sum_{ \signv \in \{\pm\}^{\mDi[\indexone]}} \sum_{ \signvb \in \{\pm\}^{\mDi[\indextwo]}} \prod_{j\in\nset{n}} \FuEx{0} \lb \yden^{\indexone,\signv}(Z_j) \yden^{\indextwo, \signvb}(Z_j) \rb \\
		= \frac{1}{\lv \indexset \rv^2} \sum_{\indexone, \indextwo \in \indexset}  \frac{1}{2^{\dimindexone}}   \frac{1}{2^{\dimindextwo}} \sum_{ \signv \in \{\pm\}^{\mDi[\indexone]}} \sum_{ \signvb \in \{\pm\}^{\mDi[\indextwo]}} \lb  \FuEx{0} \lb \yden^{\indexone,\signv}(Z_1) \yden^{\indextwo, \signvb}(Z_1) \rb  \rb^n.
	\end{align*}
Exploiting the orthonormality of
        $(\expb)_{j \in \IZ}$	we  calculate
	\begin{equation*}
		\FuEx{0}\lb  \yden^{\indexone,\signv}(Z_1) \yden^{\indextwo, \signvb}(Z_1) \rb = \int \yden^{\indexone,\signv}(z) \yden^{\indextwo, \signvb}(z) \dif z 
                = 1 +  2 \sum_{ j\in\nset{\mDi[\indexone] \wedge \mDi[\indextwo]}} \mPa[j]{\indexone,\signv[j]}\mPa[j]{\indextwo, \signvb[j]}.
	\end{equation*}
	Applying the inequality $1+x \leq \exp(x)$ for all $x \in \IR$ we obtain
	\begin{align*}
	\FuEx{0} \lb \yden^{\indexone,\signv}(Z_1) \yden^{\indextwo,\signvb}(Z_1) \rb =  1 + 2\hspace*{-2.ex}  \sum_{ j\in\nset{\dimindexone \wedge \dimindextwo}}\hspace*{-1.ex}   \mPa[j]{\indexone,\signv[j]}\mPa[j]{\indextwo, \signvb[j]}  \leq  \exp \big(  2\hspace*{-2.ex}  \sum_{ j\in\nset{\dimindexone \wedge \dimindextwo}}\hspace*{-1.ex}   \mPa[j]{\indexone,\signv[j]} \mPa[j]{\indextwo, \signvb[j]} \big) = \prod_{ j\in\nset{\dimindexone \wedge \dimindextwo}} \hspace*{-2.ex}  \exp \big( 2  \mPa[j]{\indexone,\signv[j]}\mPa[j]{\indextwo, \signvb[j]} \big).
	\end{align*}
	Hence, 
	\begin{align*}
	&	\FuEx{0} \big(	\frac{\dif \FuVg{1}}{\dif \FuVg{0}} (Z_j,_{j\in\nset{n}}) \big)^2 \leq
		 \frac{1}{\lv \indexset \rv^2} \sum_{\indexone, \indextwo \in \indexset}  \frac{1}{2^{\dimindexone}}   \frac{1}{2^{\dimindextwo}} \sum_{ \signv \in \{\pm\}^{\dimindexone}} \sum_{ \signvb \in \{\pm\}^{\dimindextwo}} \prod_{ j\in\nset{\dimindexone \wedge \dimindextwo}} \hspace*{-2.ex}  \exp \big( 2 n  \mPa[j]{\indexone,\signv[j]} \mPa[j]{\indextwo, \signvb[j]}\big),
	\end{align*}
	where we  apply \cref{induction} to the $\signvb$-summation with $J_j^{\signvb[j]} = \exp \lb  2 n  \mPa[j]{\indexone,\signv[j]} \mPa[j]{\indextwo, \signvb[j]} \rb$ and obtain
	\begin{align*}
		\FuEx{0} \big(	\frac{\dif \FuVg{1}}{\dif \FuVg{0}}
          (Z_j,_{j\in\nset{n}}) \big)^2 	& \leq	 	 \frac{1}{\lv
                                         \indexset \rv^2}
                                         \sum_{\indexone, \indextwo
                                         \in \indexset}
                                         \frac{1}{2^{\dimindexone}}
                                         \sum_{ \signv \in \{\pm
                                         \}^{\dimindexone}} \prod_{ j\in\nset{\dimindexone \wedge \dimindextwo}} \hspace*{-2.ex} \frac{\exp \lb - 2 n   \mPa[j]{\indexone,\signv[j]} \mPa[j]{\indextwo}  \rb +\exp \lb  2 n   \mPa[j]{\indexone,\signv[j]} \mPa[j]{\indextwo} \rb }{2}
	\end{align*}
	Again applying \cref{induction} to the $\signv$-summation with $J_j^{\signv[j]} = \tfrac{\exp \lb - 2 n  \mPa[j]{\indexone,\signv[j]}\mPa[j]{\indextwo} \rb +\exp \lb  2 n  \mPa[j]{\indexone,\signv[j]} \mPa[j]{\indextwo} \rb }{2} $ yields
	\begin{multline*}
			\FuEx{0}  \big(	\frac{\dif\FuVg{1}}{\dif \FuVg{0}} (Z_j,_{j\in\nset{n}}) \big)^2
		 \leq 
           \frac{1}{\lv \indexset \rv^2} \sum_{\indexone, \indextwo \in \indexset}\prod_{ j\in\nset{\dimindexone \wedge \dimindextwo}} \hspace*{-2.ex} \frac{\exp \lb - 2 n \mPa[j]{\indexone}  \mPa[j]{\indextwo} \rb +\exp \lb  2 n \mPa[j]{\indexone} \mPa[j]{\indextwo}  \rb }{2}  \\
		=  \frac{1}{\lv \indexset \rv^2} \sum_{\indexone, \indextwo \in \indexset}\prod_{ j\in\nset{\dimindexone \wedge \dimindextwo}} \hspace*{-2.ex} \cosh\lb 2 n  \mPa[j]{\indexone} \mPa[j]{\indextwo}  \rb
	\end{multline*}
	Since $\cosh(x) \leq \exp(x^2/2)$, $x\in\Rz$, we obtain
	\begin{align*}
	\FuEx{0}  \big(	\frac{\dif\FuVg{1}}{\dif \FuVg{0}} (Z_j,_{j\in\nset{n}}) \big)^2
		 	& \leq  \frac{1}{\lv \indexset \rv^2} \sum_{\indexone, \indextwo \in \indexset}  \prod_{ j\in\nset{\dimindexone \wedge \dimindextwo}} \hspace*{-2.ex} \exp\big(  2 n^2 ( \mPa[j]{\indexone}  \mPa[j]{\indextwo} )^2\big) =  \frac{1}{\lv \indexset \rv^2} \sum_{\indexone, \indextwo \in \indexset}  \exp\big(  2 n^2\hspace*{-2.ex}  \sum_{ j\in\nset{\dimindexone \wedge \dimindextwo}} \hspace*{-1.ex} (\mPa[j]{\indexone}  \mPa[j]{\indextwo})^2\big),
	\end{align*}
	which completes the proof.
\end{proof}
\bibliography{lit.bib}
\end{document}